\def\benm{\begin{enumerate}}
\def\eenm{\end{enumerate}}
\def\bal{\begin{align}}
\def\eal{\end{align}}
\def\ii{\mathrm{i}}
\newtheorem{theorem}{Theorem}[section]
\theoremstyle{definition}
\newtheorem{example}[theorem]{Example}
\newtheorem{proposition}[theorem]{Proposition}
\newtheorem{corollary}[theorem]{Corollary}
\theoremstyle{remark}
\newtheorem{remark}[theorem]{Remark}
\numberwithin{equation}{section}
\newcommand{\bfomega}{\mbox{\boldmath $\omega$ \unboldmath} \hskip -0.05 true in}
\newcommand{\bom}{\bfomega}
\newcommand{\bfmu}{\mbox{\boldmath $\mu$ \unboldmath} \hskip -0.05 true in}
\newcommand{\dd}{\mathrm{d}}
\newcommand{\Sp}{\mathrm{Sp}}
\newcommand{\bfS}{\mathbf{S}}
\begin{document}

\title[Covariant Function Algebras of Invariant Characters]{Covariant Function Algebras of Invariant Characters of Normal Subgroups}

\author[A. Ghaani Farashahi]{Arash Ghaani Farashahi}
\address{Department of Mechanical Engineering, National University of Singapore,
9 Engineering Drive 1, Singapore 117575.}
\email{arash.ghaanifarashahi@nus.edu.sg}
\email{ghaanifarashahi@outlook.com}
\address{Department of Pure Mathematics, School of Mathematics, University of Leeds, Leeds LS2 9JT, United Kingdom}
\email{a.ghaanifarashahi@leeds.ac.uk}

\curraddr{}




\subjclass[2020]{Primary 43A15, 43A20, 43A85.}

\date{\bf June 18, 2023}


\keywords{covariance property, covariant function, normal subgroup, covariant convolution, covariant involution, character, invariant character, twisted convolution, twisted involution.}

\begin{abstract}
This paper presents abstract harmonic analysis foundations for structure of  covariant function algebras of invariant characters of normal subgroups. 
Suppose that $G$ is a locally compact group and $N$ is a closed normal subgroup of $G$. Let $\xi:N\to\mathbb{T}$ be a continuous $G$-invariant character, $1\le p<\infty$, and $L_\xi^p(G,N)$ be the $L^p$-space of all covariant functions of $\xi$ on $G$. We study structure of covariant convolution in $L^p_\xi(G,N)$. It is proved that $L^1_\xi(G,N)$ is a Banach $*$-algebra and $L^p_\xi(G,N)$ is a Banach $L^1_\xi(G,N)$-module. We then investigate the theory of covariant convolutions for the case of characters of canonical normal subgroups in semi-direct product groups. The paper is concluded by realization of the theory in the case of different examples. 
\end{abstract}

\maketitle

\section{\bf{Introduction}}
The Banach convolution function algebras play building block roles in abstract harmonic analysis, functional analysis, and operator theory, see \cite{AGHF.IJM, kan.lau}. Over the last decades, different aspects of Banach convolution function algebras have achieved significant popularity in postmodern areas of harmonic analysis such as coorbit theory and constructive approximation, see \cite{Fei1, Fei2, kisil.cov1}. The covariant functions of characters of closed subgroups give a unified formulation for interesting mathematical objects in variant topics including calculus of pseudo-differential operators, number theory (automorphic forms), induced representations, homogeneous spaces, complex (hypercomplex) analysis, and coherent states, see \cite{RB, FollP, AGHF.CJM, kan.taylor, kisil.cov2, kisil1, MackII, MackI, pere}.

The classical methods of abstract harmonic analysis on groups cannot be applied for covariant functions as a unified theory. In the case of covariant functions of characters of compact subgroups, harmonic analysis on covariant functions studied in \cite{AGHF.ch.com}. Some fundamental operator theoretic aspects for Banach function spaces of covariant functions of characters of closed normal subgroups studied in \cite{AGHF.ch.normal.main}. The convolution module action of group algebras on Banach functions spaces of covariant functions of closed normal subgroups investigated in \cite{AGHF.ch.normal.mod}. This paper gives a unified and systematic approach to the structure of covariant convolution function algebras of invariant characters of closed normal subgroups.  The introduced technique has several advantages.  To begin with,  it is independent of assuming existence and employment of Borel cross-sections.  In addition, it gives a universal characterization for fundamental structures of abstract harmonic analysis on locally compact groups in different directions.  In the framework of twisted convolutions using multipliers,  it gives a universal characterization of certain twisted convolutions \cite{MackActa1958, LepMAn1965,LepInv1-1967, LepInv2-1967,LepInv1968,BuSmTrans1970}. In particular,  it extends the classical theory for harmonic analysis of convolution function algebras on quotient groups by assuming the character to be the trivial character of the normal subgroup \cite{der00, der1983}. 

This article contains 5 sections and organized as follows. Section  2  is  devoted  to  fix  notations  and  provides  a  summary
of classical harmonic analysis on locally compact groups, quotient groups of locally compact groups, and basics for covariant functions of characters. 
Let $G$ be a locally compact group and $N$ be a closed normal subgroup of $G$. Suppose $\xi:N\to\mathbb{T}$ is a $G$-invariant character of $N$. 
In section 3 we introduce the abstract notions of convolution and involution on covariant function spaces of the character $\xi$. It is shown that if $N$ is compact then covariant convolution (resp. involution) coincides with the convolution (resp. involution) on $G$. It is proved that $L^1_\xi(G,N)$ is a Banach $*$-algebra and $L^p_\xi(G,N)$ is a Banach $L^1_\xi(G,N)$-module.
Section 4 investigates covariant convolutions/involutions for semi-direct products. The paper is concluded by illustration of the theory for the case of some examples including abstract Weyl-Heisenberg groups and subgroups of Jacobi groups.  

\section{\bf{Preliminaries and Notations}}

Throughout this section we present basic preliminaries and notations.

Let $\mathcal{A}$ be a Banach $*$-algebra. A right approximate identity in 
$\mathcal{A}$ is a net $\mathcal{E}=(f_\alpha)_{\alpha\in\Lambda}$ such that for every element $f$ of $\mathcal{A}$, we have $f=\lim_\alpha f\ast f_\alpha$. Similarly, $\mathcal{E}$ is called a left approximate identity for the Banach algebra $\mathcal{A}$, if $f=\lim_\alpha f_\alpha\ast f$ for all $f\in\mathcal{A}$. A two-sided approximate identity for $\mathcal{A}$ is a net $\mathcal{E}=(f_\alpha)_{\alpha\in\Lambda}$ which is both a right approximate identity and a left approximate identity for $\mathcal{A}$. If an approximate identity is a sequence, it is called a sequential approximate identity.

Suppose that $X$ is a locally compact Hausdorff space. Then $\mathcal{C}(X)$ is the space of all continuous complex valued functions on $X$ and $\mathcal{C}_c(X)$ is the space of all continuous complex valued functions on $X$ with compact support. Let $\lambda$ be a positive Radon measure on $X$, and $1\le p<\infty$. We then denote the Banach space consists of equivalence classes of $\lambda$-measurable complex valued functions $f:X\to\mathbb{C}$ such that
$$\|f\|_{L^p(X,\lambda)}^p:=\int_X|f(x)|^p\dd\lambda(x)<\infty,$$
by $L^p(X,\lambda)$ which contains $\mathcal{C}_c(X)$ as a $\|\cdot\|_{L^p(X,\lambda)}$-dense subspace.

\subsection{Abstract Harmonic Analysis on Groups}Let $G$ be a locally compact group with the modular function $\Delta_G$ and a left Haar measure $\lambda_{G}$.
For $1\le p<\infty$, the notation $L^p(G)$ used for the Banach function space $L^p(G,\lambda_G)$. The convolution of functions $f,g\in\mathcal{C}_c(G)$ is defined by 
\begin{equation}\label{f.ast.g}
f\ast_G g(x):=\int_Gf(y)g(y^{-1}x)\dd\lambda_G(y)\quad (x\in G).
\end{equation}
Then $f\ast_G g\in\mathcal{C}_c(G)$ and the bilinear product $\ast_G:\mathcal{C}_c(G)\times\mathcal{C}_c(G)\to\mathcal{C}_c(G)$ has a unique extension to the bilinear product $\ast_G:L^1(G)\times L^p(G)\to L^p(G)$ such that 
$\|f\ast_G g\|_{L^p(G)}\le \|f\|_{L^1(G)}\|g\|_{L^p(G)}$, if $f\in L^1(G)$ and $g\in L^p(G)$. The Banach function space $L^1(G)$ is a Banach $*$-algebra with respect to the bilinear product $\ast_G:L^1(G)\times L^1(G)\to L^1(G)$ and the involution given by $f\mapsto f^{\ast^G}$ where $f^{*^G}(x):=\Delta_G(x^{-1})\overline{f(x^{-1})}$ for $x\in G$. The Banach $*$-algebra $L^1(G)$ admits a two-sided approximate identity. If $G$ is second-countable then $L^1(G)$ admits a sequential approximate identity. For $p>1$, the Banach space $L^p(G)$ is a Banach left $L^1(G)$-module equipped with  the left module action $\ast_G:L^1(G)\times L^p(G)\to L^p(G)$, see \cite{FollH, HR1, 50} and the list of references therein. 

Let $N$ be a closed normal subgroup of $G$ with the left Haar measure $\lambda_N$. The coset space $G/N$ is a locally compact group called as quotient (factor) group of $N$ in $G$. Then $\mathcal{C}_c(G/N)$ consists of all functions $T_N(f)$, where 
$f\in\mathcal{C}_c(G)$ and
\begin{equation}\label{5.1}
T_N(f)(xN)=\int_Nf(xs)\dd\lambda_{N}(s)\quad (xN\in G/N).
\end{equation}
The quotient group $G/N$ has a left Haar measure $\lambda_{G/N}$, normalized with respect to the following Weil's formula  
\begin{equation}\label{6}
\int_{G/N}T_N(f)(xN)\dd\lambda_{G/N}(xN)=\int_Gf(x)\dd\lambda_G(x),
\end{equation}
for $f\in L^1(G)$, see \cite{der1983, FollH}.
Then the convolution of $\Psi\in L^1(G/N)$ and $\Phi\in L^p(G/N)$ has the form 
\begin{equation*}
\Psi\ast_{G/N}\Phi(xN)=\int_{G/N}\Psi(yN)\Phi(y^{-1}xN)\dd\lambda_{G/N}(yN),
\end{equation*}
and the involution is given by 
$\Psi^{\ast^{G/N}}(xN):=\Delta_{G/N}(x^{-1}N)\overline{\Psi(x^{-1}N)}$
for $xN\in G/N$. In this case, the linear map $T_N:L^1(G)\to L^1(G/N)$ is a contraction (norm-decreasing) $*$-homomorphism of Banach $*$-algebras, see Theorem 3.5.4 of \cite{50}.   

\subsection{Covariant Functions of Characters}
Let $G$ be a locally compact group and $N$ be a closed subgroup of $G$. 
A character $\xi$ of $N$, is a continuous group homomorphism $\xi:N\to\mathbb{T}$, where $\mathbb{T}:=\{z\in\mathbb{C}:|z|=1\}$ is the circle group. In terms of group representation theory, each character of $N$ is a 1-dimensional irreducible continuous unitary representation of $N$. We then denote the set of all characters of $N$ by $\chi(N)$. 
A function $\psi:G\to\mathbb{C}$ satisfies covariant property associated to the character $\xi\in\chi(N)$, if $\psi(xs)=\xi(s)\psi(x)$, for every $x\in G$ and $s\in N$. In this case, $\psi$ is called a covariant function of $\xi$. The covariant functions appear in abstract harmonic analysis in the construction of induced representations, see \cite{FollH, kan.taylor}. We here employ some of the classical tools in this direction. Suppose that $\lambda_N$ is a left Haar measure on $N$. For $\xi\in\chi(N)$ and $f\in\mathcal{C}_c(G)$, let $T_\xi(f):G\to\mathbb{C}$ be the function defined by  
\[
T_\xi(f)(x):=\int_Nf(xs)\overline{\xi(s)}\dd\lambda_N(s),
\]
for every $x\in G$. Assume that $\mathcal{C}_\xi(G,N)$ is the linear subspace of $\mathcal{C}(G)$ given by 
\[
\mathcal{C}_\xi(G,N):=\{\psi\in\mathcal{C}_c(G|N):\psi(xk)=\xi(k)\psi(x),\ {\rm for\ all}\ x\in G,\ k\in N\},
\]
where 
\[
\mathcal{C}_c(G|N):=\{\psi\in\mathcal{C}(G):\mathrm{q}({\rm supp}(\psi))\ {\rm is\ compact\ in}\ G/N\},
\]
and $\mathrm{q}:G\to G/N$ is the canonical map given by $\mathrm{q}(x):=xN$ for every $x\in G$. The linear operator $T_\xi$ maps $\mathcal{C}_c(G)$ onto $\mathcal{C}_\xi(G,N)$, see Proposition 6.1 of \cite{FollH}.
If $N$ is a compact subgroup of $G$ then $\mathcal{C}_\xi(G,N)\subseteq\mathcal{C}_c(G)$, see Proposition 3.1 of \cite{AGHF.ch.com}. 

\subsection{\bf Covariant Functions of Characters of Normal Subgroups}
We then review some aspects of harmonic analysis of covariant functions of characters of normal subgroups on locally compact groups. Throughout, let $G$ be a locally compact group and $N$ be a closed normal subgroup of $G$.
Then there exists a unique homomorphism $\sigma_N:G\to (0,\infty)$ such that 
\begin{equation}\label{tau.main}
\int_Nv(s)\dd\lambda_N(x^{-1}sx)=\sigma_N(x)\int_Nv(s)\dd\lambda_N(s),
\end{equation}
and 
\[
\int_Nv(s)\dd\mu_N(x^{-1}sx)=\sigma_N(x)\int_Nv(s)\dd\mu_N(s),
\]
for every left Haar measure $\lambda_N$ of $N$, right Haar measure $\mu_N$ of $N$, $v\in\mathcal{C}_c(N)$, and $x\in G$. It is worthwhile to mention that existence of the homomorphism $\sigma_N:G\to (0,\infty)$ guaranteed by the uniqueness of left Haar measures on the locally compact group $N$, Theorem 2.20 of \cite{FollH}. We then have $\sigma_N(t)=\Delta_N(t)$ for all $t\in N$ and hence $\sigma_G(x)=\Delta_G(x)$ for $x\in G$. The modular function of $G/N$ satisfies $\Delta_G(x)=\sigma_N(x)\Delta_{G/N}(xN)$ for $x\in G$, see Proposition 11 of \cite[Chap. VII, \S2]{Bour}. In particular, if $N$ is central we then have $\sigma_N=1$.

Suppose that $\xi\in\chi(N)$ is given. Let $\psi:G\to\mathbb{C}$ be a covariant function of $\xi$. Then  
\begin{equation}\label{cov.normal}
\psi(sx)=\xi(x^{-1}sx)\psi(x),
\end{equation} 
for $x\in G$ and $s\in N$. Also, according to Proposition 3.3 of \cite{AGHF.ch.normal.main}, for $x\in G$, and $f\in\mathcal{C}_c(G)$, we have  
\begin{equation}\label{J.xi.normal.1}
T_\xi(f)(x)=\sigma_N(x)\int_Nf(sx)\overline{\xi(x^{-1}sx)}\dd\lambda_N(s).
\end{equation}

We then review structure of classical $L^p$-spaces of covariant functions of characters of normal subgroups. Let $\lambda_{G}$ be a left Haar measure on $G$ and $\lambda_N$ be a left Haar measure on $N$.
Suppose that $\lambda_{G/N}$ is the left Haar measure on the quotient group $G/N$ normalized with respect to Weil's formula (\ref{6}). For every $1\le p<\infty$ and $\psi\in \mathcal{C}_\xi(G,N)$, the covariant norm is defined by 
\[
\|\psi\|_{(p)}^p:=\int_{G/N}|\psi(y)|^p\dd\lambda_{G/N}(yN).
\]
We then assume that $L^p_\xi(G,N)$ is the Banach completion of the normed linear space $\mathcal{C}_\xi(G,N)$ with respect to $\|\cdot\|_{(p)}$. The  completion norm is also denoted by $\|\cdot\|_{(p)}$. Invoking the structure of $L^p_\xi(G,N)$, as the Banach completion of a normed space, one can identify the abstract space $L^p_\xi(G,N)$ with a linear space of complex-valued functions on $G$, where two functions are identified when they are equal locally almost everywhere, see Remark 3.6 of \cite{AGHF.ch.normal.mod}. 

It is shown that the linear operator $T_\xi:(\mathcal{C}_c(G),\|\cdot\|_{L^1(G)})\to(\mathcal{C}_\xi(G,N),\|\cdot\|_{(1)})$ 
is a contraction, see Theorem 4.4 of \cite{AGHF.ch.normal.main}. 
Therefore, $T_\xi:(\mathcal{C}_c(G),\|\cdot\|_{L^1(G)})\to(\mathcal{C}_\xi(G,N),\|\cdot\|_{(1)})$ has a unique extension to a contraction from $L^1(G)$ onto $L^1_\xi(G,N)$. The extended map $T_\xi:L^1(G)\to L^1_\xi(G,N)$ is given by $f\mapsto T_\xi(f)$, where  (Theorem 5.4 of \cite{AGHF.ch.normal.main})
\[
T_\xi(f)(x)=\int_Nf(xs)\overline{\xi(s)}\dd\lambda_N(s)\ \ \ {\rm for\ \ l.a.e.}\ x\in G.
\]
If $\mathcal{N}_\xi^1(G,N)$ is the kernel of the linear operator $T_\xi$ in $L^1(G)$ then the Banach space $L_\xi^1(G,N)$ is canonically isometric isomorphic to the quotient Banach space $L^1(G)/\mathcal{N}_\xi^1(G,N)$, see Theorem 5.9 of \cite{AGHF.ch.normal.main}.

If $N$ is a compact normal subgroup of $G$, each Haar measure of $N$ is finite and hence 
\begin{equation}\label{p.comp}
\|\psi\|_{(p)}=\lambda_N(N)^{-1/p}\|\psi\|_{L^p(G)},  
\end{equation}
for every $\psi\in \mathcal{C}_\xi(G,N)$, see Proposition 3.2 of \cite{AGHF.ch.normal.mod}. Therefore, $L^p_\xi(G,N)\subseteq L^p(G)$ when $N$ is compact. 
It is proved that if $N$ is a compact subgroup of $G$ 
with the probability Haar measure $\lambda_N$ on $N$ and $p>1$, then the linear operator $T_\xi:(\mathcal{C}_c(G),\|.\|_{L^p(G)})\to(\mathcal{C}_\xi(G,N),\|.\|_{L^p(G)})$ is a contraction, see Theorem 4.1 of \cite{AGHF.ch.com}.
We then denote by $T_\xi:L^p(G)\to L^p_\xi(G,N)$, the unique extension of $T_\xi:(\mathcal{C}_c(G),\|.\|_{L^p(G)})\to(\mathcal{C}_\xi(G,N),\|.\|_{L^p(G)})$ to a contraction from $L^p(G)$ onto $L^p_\xi(G,N)$. In this case, the Banach space $L_\xi^p(G,N)$ is isometrically isomorphic to the quotient Banach $L^p(G)/\mathcal{N}_\xi^p(G,N)$, where $\mathcal{N}^p_\xi(G,N)$ is the kernel of the linear operator $T_\xi:L^p(G)\to L_\xi^p(G,N)$ in $L^p(G)$, see Theorem 4.9 of \cite{AGHF.ch.com}.

\section{\bf Covariant Function Algebras of Invariant Characters}
In this section, we study structure of covariant function algebras of invariant characters of normal subgroups. We introduce the abstract notions of convolutions and involutions for covariant functions of invariant characters of normal subgroups. Suppose that $G$ is a locally compact group and $N$ is a closed normal subgroup of $G$. Let $\lambda_{G}$ be a left Haar measure on $G$ and $\lambda_N$ be a left Haar measure on $N$.
Assume that $\lambda_{G/N}$ is the left Haar measure on the 
quotient group $G/N$ normalized with respect to Weil's formula (\ref{6}).
  
\subsection{Covariant Convolutions}
Throughout, we present the abstract notion of covariant convolutions,  convolution of covariant functions, for invariant characters of normal subgroups. 

A character $\xi\in\chi(N)$ is called {\it $G$-invariant} if $\xi_x=\xi$ for every $x\in G$, where the character $\xi_x:N\to\mathbb{T}$ is given by 
$\xi_x(s):=\xi(x^{-1}sx)$, for every $s\in N$. We then denote the set of all $G$-invariant characters of $N$ by $\Gamma(G,N)$, that is 
\[
\Gamma(G,N):=\{\xi\in\chi(N):\xi_x=\xi,\ {\rm for\ all}\ x\in G\}.
\]
If $N$ is a closed central 
subgroup of $G$ then $\Gamma(G,N)=\chi(N)$.

\begin{remark}\label{left.cov.inv.xi}
Let $N$ be a closed normal subgroup of $G$, and $\xi\in\Gamma(G,N)$. Suppose that $\psi:G\to\mathbb{C}$ is a covariant function of $\xi$. Then  $\psi(sx)=\xi(s)\psi(x)$ for $x\in G$ and $s\in N$. Indeed, 
using (\ref{cov.normal}) and since $\xi$ is $G$-invariant, we have 
\[
\psi(sx)=\xi(x^{-1}sx)\psi(s)=\xi(s)\psi(x).
\]
\end{remark}

We then have the following observations concerning invariant characters. 

\begin{proposition}\label{J.xi.inv.betaN}
{\it Let $G$ be a locally compact group and $N$ be a closed normal subgroup of $G$. Suppose $\xi\in\Gamma(G,N)$, and $f\in\mathcal{C}_c(G)$ are arbitrary. Then
\begin{enumerate}
\item For every $x\in G$, we have  
\[
T_\xi(f)(x)=\sigma_N(x)\int_Nf(sx)\overline{\xi(s)}\dd\lambda_N(s).
\]
\item For every $x\in G$ and $k\in N$, we have $T_\xi(L_kf)(x)=\overline{\xi(k)}T_\xi(f)(x)$.
\end{enumerate}
}\end{proposition}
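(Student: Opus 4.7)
The plan is to reduce both statements to identities already recorded in the preliminary section, using only the hypothesis that $\xi$ is $G$-invariant.

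For part (1), the starting point is equation (\ref{J.xi.normal.1}), which gives, for any $\xi\in\chi(N)$ and $f\in\mathcal{C}_c(G)$,
\[
T_\xi(f)(x)=\sigma_N(x)\int_Nf(sx)\overline{\xi(x^{-1}sx)}\dd\lambda_N(s).
\]
Since $\xi\in\Gamma(G,N)$, we have $\xi(x^{-1}sx)=\xi_x(s)=\xi(s)$ for all $s\in N$, so the integrand $\overline{\xi(x^{-1}sx)}$ collapses to $\overline{\xi(s)}$ and the desired formula follows immediately.

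For part (2), I would begin from the original definition
\[
T_\xi(L_kf)(x)=\int_N(L_kf)(xs)\overline{\xi(s)}\dd\lambda_N(s)=\int_Nf(k^{-1}xs)\overline{\xi(s)}\dd\lambda_N(s),
\]
and use normality of $N$ to rewrite $k^{-1}x=x\cdot(x^{-1}k^{-1}x)$, with $k':=x^{-1}k^{-1}x\in N$. This gives $f(k^{-1}xs)=f(x\,k's)$. A change of variables $t=k's$ in the left Haar integral on $N$, together with the homomorphism property of $\xi$, produces a factor $\overline{\xi((k')^{-1})}$ outside the integral, leaving exactly $T_\xi(f)(x)$. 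The final step is to apply $G$-invariance once more: $\xi((k')^{-1})=\xi(x^{-1}kx)=\xi(k)$, yielding $T_\xi(L_kf)(x)=\overline{\xi(k)}T_\xi(f)(x)$.

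Neither step presents a genuine obstacle; the only subtlety is the bookkeeping in part (2), where one must be careful that normality of $N$ is what allows the conjugate $x^{-1}k^{-1}x$ to land inside $N$ so that the $\lambda_N$-translation invariance is available, and that $G$-invariance of $\xi$ is used twice (once implicitly to pass from $\xi(k')$ to $\xi(k)$ and once to discard the conjugation by $x$). Alternatively, part (2) can also be obtained directly from part (1) by the same substitution, which is perhaps the cleanest route and the one I would present.
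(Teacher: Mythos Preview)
Your proposal is correct and matches the paper's approach almost exactly. For part (1) the paper does precisely what you describe, applying equation (\ref{J.xi.normal.1}) and then $G$-invariance; for part (2) the paper quotes the identity $T_\xi(L_kf)(x)=\overline{\xi(x^{-1}kx)}T_\xi(f)(x)$ from Proposition~3.4 of \cite{AGHF.ch.normal.main} and then applies $G$-invariance, whereas you carry out that same computation by hand---so the only difference is that you spell out the substitution the paper hides behind a citation. (One minor quibble: $G$-invariance is used only once in part (2), in the passage $\xi(x^{-1}kx)=\xi(k)$; the equality $(k')^{-1}=x^{-1}kx$ is pure group algebra.)
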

\begin{proof}
(1) Let $x\in G$ be given. Using (\ref{J.xi.normal.1}) and since $\xi$ is $G$-invariant, we have 
\[
T_\xi(f)(x)=\sigma_N(x)\int_Nf(sx)\overline{\xi(x^{-1}sx)}\dd\lambda_N(s)=\sigma_N(x)\int_Nf(sx)\overline{\xi(s)}\dd\lambda_N(s).
\]
(2) Invoking Proposition 3.4 of \cite{AGHF.ch.normal.main}, we get  
\[
T_\xi(L_kf)(x)=\overline{\xi(x^{-1}kx)}T_\xi(f)(x)=\overline{\xi(k)}T_\xi(f)(x).
\]
\end{proof}

\begin{proposition}\label{main.wd}
{\it Let $G$ be a locally compact group and $N$ be a closed normal subgroup of $G$. Suppose $\xi\in\Gamma(G,N)$, $x\in G$, and $\psi,\phi\in \mathcal{C}_\xi(G,N)$. The mapping $\varphi_x:G\to\mathbb{C}$ given by $\varphi_x(y):=\psi(y)\phi(y^{-1}x)$ reduces to constant on $N$ and hence it defines a well-defined complex-valued function on $G/N$.
}\end{proposition}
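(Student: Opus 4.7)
The plan is to verify directly that $\varphi_x(ys) = \varphi_x(y)$ for every $y \in G$ and every $s \in N$. Once this is established, $\varphi_x$ is constant on each right coset $yN$, and therefore descends to a well-defined function on $G/N$ via $yN \mapsto \varphi_x(y)$.

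First, I would fix $s \in N$ and $y \in G$ and write
\[
\varphi_x(ys) = \psi(ys)\phi((ys)^{-1}x) = \psi(ys)\phi(s^{-1}y^{-1}x).
\]
The factor $\psi(ys)$ is easy: the covariance property of $\psi \in \mathcal{C}_\xi(G,N)$ yields $\psi(ys) = \xi(s)\psi(y)$. The subtlety lies in the second factor, where $s^{-1}$ appears on the \emph{left} of the argument of $\phi$ rather than on the right, so the right-covariance property of $\phi$ is not directly applicable.

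This is exactly where the hypothesis that $\xi$ is $G$-invariant enters, and it is the main point of the proof. By Remark \ref{left.cov.inv.xi}, $G$-invariance of $\xi$ promotes the right-covariance of every $\phi \in \mathcal{C}_\xi(G,N)$ to a two-sided covariance: $\phi(tz) = \xi(t)\phi(z)$ for all $z \in G$ and $t \in N$. Applied with $t = s^{-1}$ and $z = y^{-1}x$, this gives
\[
\phi(s^{-1}y^{-1}x) = \xi(s^{-1})\phi(y^{-1}x) = \overline{\xi(s)}\,\phi(y^{-1}x).
\]
Substituting both identities back yields
\[
\varphi_x(ys) = \xi(s)\overline{\xi(s)}\,\psi(y)\phi(y^{-1}x) = \psi(y)\phi(y^{-1}x) = \varphi_x(y),
\]
using $|\xi(s)| = 1$. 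This proves $\varphi_x$ is $N$-right-invariant and hence factors through the quotient map $\mathrm{q}: G \to G/N$, giving the desired well-defined function on $G/N$.

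The only potential obstacle is the direction of the covariance: without $G$-invariance of $\xi$, the factor $\phi(s^{-1}y^{-1}x)$ would instead equal $\xi(y^{-1}x s^{-1} x^{-1} y)\phi(y^{-1}x)$ via (\ref{cov.normal}), and the two character factors would not cancel. Thus the statement genuinely uses the assumption $\xi \in \Gamma(G,N)$, and this is exactly the place where that hypothesis is needed.
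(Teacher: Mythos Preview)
Your proof is correct and follows essentially the same route as the paper: both verify $\varphi_x(ys)=\varphi_x(y)$ by expanding $\psi(ys)$ via right covariance and handling $\phi(s^{-1}y^{-1}x)$ via the left-covariance identity of Remark~\ref{left.cov.inv.xi}, then cancelling $\xi(s)\overline{\xi(s)}=1$. Your closing remark on why $G$-invariance of $\xi$ is genuinely needed is a useful addition beyond what the paper writes.
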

\begin{proof}
Let $y\in G$ and $s\in N$. Since $N$ is normal and $\xi$ is $G$-invariant, using Remark \ref{left.cov.inv.xi}, we have   
\begin{align*}
\phi(s^{-1}y^{-1}x)
=\xi(s^{-1})\phi(y^{-1}x).
\end{align*}
Therefore, we get 
\begin{align*}
\varphi_x(ys)
&=\psi(ys)\phi((ys)^{-1}x)
=\psi(ys)\phi(s^{-1}y^{-1}x)
\\&=\psi(ys)\xi(s^{-1})\phi(y^{-1}x)
=\xi(s)\psi(y)\xi(s^{-1})\phi(y^{-1}x)
\\&=\xi(s)\psi(y)\overline{\xi(s)}\phi(y^{-1}x)=|\xi(s)|^2\psi(y)\phi(y^{-1}x)=\varphi_x(y).
\end{align*}
\end{proof}
 
Let $\xi\in\Gamma(G,N)$ and $\psi,\phi\in \mathcal{C}_\xi(G,N)$. We then define the {\it covariant convolution} $\psi\natural\phi:G\to\mathbb{C}$ by
\begin{equation}\label{tc.main.central}
\psi\natural\phi(x):=\int_{G/N}\psi(y)\phi(y^{-1}x)\dd\lambda_{G/N}(yN),
\end{equation}
for every $x\in G$. 

Invoking Proposition \ref{main.wd}, for any $x\in G$, the function 
$\varphi_x:G\to\mathbb{C}$ given by $\varphi_x(y):=\psi(y)\phi(y^{-1}x)$ reduces to constant on $N$. Thus it defines a well-defined complex-valued function on $G/N$. Since $\psi,\phi$ are  continuous on $G$, ${\rm supp}(\psi)N$ and ${\rm supp}(\phi)N$ are compact in $G/N$, we deduce that $\varphi_x$ defines a function in $\mathcal{C}_c(G/N)$ which can be integrated on $G/N$ with respect to $\lambda_{G/N}$. Therefore, the right hand side of  (\ref{tc.main.central}) defines a well-defined function on $G$.

\begin{remark}
Let $\xi=1$ be the trivial character of the normal subgroup $N$. Suppose that  
$\psi,\phi\in \mathcal{C}_1(G,N)$ are identified with $\widetilde{\psi},\widetilde{\phi}\in\mathcal{C}_c(G/N)$. We then have $\psi\natural\phi(x)=\widetilde{\psi}\ast_{G/N}\widetilde{\phi}(xN)$ for $x\in G$. 
\end{remark}

\begin{remark}\label{tws.cnv}
The covariant convolution (\ref{tc.main.central}) gives a universal characterization of twisted convolutions on $G/N$ associated to certain class of multipliers,  if there is a Borel cross-section for $G/N$ in $G$.   Let $\mathrm{s}:G/N\to G$ be a Borel cross-section for $G/N$ in $G$.  Suppose that $\mathrm{r}:G\to N$ is given by $\mathrm{r}(x):=\mathrm{s}(xN)^{-1}x$, for every $x\in G$.  Assume that $\beta:G/N\times G/N\to\mathbb{T}$ is given by 
\[
\beta(xN,yN):=\overline{\xi(\mathrm{s}(xyN)^{-1}\mathrm{s}(xN)\mathrm{s}(yN))},
\]
for every $xN,yN\in G/N$.  Then $\beta$ satisfies 
\[
\xi(\mathrm{r}(xy))=\overline{\beta(xN,yN)}\xi(\mathrm{r}(x))\xi(\mathrm{r}(y)),
\]
for every $x,y\in G$, concluding that $\beta$ is a multiplier on $G/N$. We then have  
\[
\psi\natural\phi(x)=\xi(\mathrm{r}(x))\psi_\mathrm{s}\ast_\beta\phi_\mathrm{s}(xN),
\]
for every $\psi,\phi\in\mathcal{C}_\xi(G,N)$ and $x\in G$, where $\psi_\mathrm{s}:=\psi\circ\mathrm{s}$ (resp.  $\phi_\mathrm{s}:=\phi\circ\mathrm{s}$), and $\ast_\beta$ is the twisted convolution associated to $\beta$, see \cite{MackActa1958, LepMAn1965,LepInv1-1967, LepInv2-1967,LepInv1968,BuSmTrans1970}.
\end{remark}

\begin{proposition}\label{cov.conv.Ncom}
{\it Let $G$ be a locally compact group and $N$ be a compact normal subgroup of $G$. Suppose $\xi\in\Gamma(G,N)$ and $\psi,\phi\in \mathcal{C}_\xi(G,N)$. Then
\[
\psi\ast_{G}\phi=\lambda_N(N)\psi\natural \phi.
\]
}\end{proposition}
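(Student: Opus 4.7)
The plan is to reduce the standard convolution integral over $G$ to an iterated integral over $G/N$ and $N$ via Weil's formula (\ref{6}), then extract the covariance of $\psi$ and $\phi$ from the inner integral.

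First, since $N$ is compact and $\psi,\phi\in\mathcal{C}_\xi(G,N)$, both $\psi$ and $\phi$ lie in $\mathcal{C}_c(G)$ (as noted in the preliminaries, $\mathcal{C}_\xi(G,N)\subseteq\mathcal{C}_c(G)$ when $N$ is compact), so the convolution $\psi\ast_G\phi(x)$ is defined pointwise for every $x\in G$ by (\ref{f.ast.g}). Fix $x\in G$ and set $F(y):=\psi(y)\phi(y^{-1}x)$, which belongs to $\mathcal{C}_c(G)$. Applying Weil's formula to $F$ gives
\[
\psi\ast_G\phi(x)=\int_G F(y)\dd\lambda_G(y)=\int_{G/N}\int_N\psi(ys)\phi(s^{-1}y^{-1}x)\dd\lambda_N(s)\dd\lambda_{G/N}(yN).
\]

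Next I would simplify the inner integral using the covariance identity and the $G$-invariance of $\xi$. The right covariance of $\psi$ gives $\psi(ys)=\xi(s)\psi(y)$. For the $\phi$ factor, Remark \ref{left.cov.inv.xi} applies because $\xi\in\Gamma(G,N)$: it yields $\phi(s^{-1}y^{-1}x)=\xi(s^{-1})\phi(y^{-1}x)=\overline{\xi(s)}\phi(y^{-1}x)$. Substituting both identities, the integrand on $N$ becomes
\[
\xi(s)\overline{\xi(s)}\,\psi(y)\phi(y^{-1}x)=|\xi(s)|^2\psi(y)\phi(y^{-1}x)=\psi(y)\phi(y^{-1}x),
\]
since $|\xi(s)|=1$. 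Hence the inner integral over $N$ equals $\lambda_N(N)\,\psi(y)\phi(y^{-1}x)$.

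Finally, pulling the constant $\lambda_N(N)$ out of the outer integral and invoking the definition (\ref{tc.main.central}) of $\natural$ yields
\[
\psi\ast_G\phi(x)=\lambda_N(N)\int_{G/N}\psi(y)\phi(y^{-1}x)\dd\lambda_{G/N}(yN)=\lambda_N(N)\,\psi\natural\phi(x),
\]
for every $x\in G$, which is the desired identity. There is no real obstacle here; the only point requiring a little care is the justification that $F$ is integrable on $G$ and lies in the class for which Weil's formula applies, which is immediate from $\psi,\phi\in\mathcal{C}_c(G)$, and the correct invocation of Remark \ref{left.cov.inv.xi} to handle the \emph{left} translation of $\phi$ by $s^{-1}$, which is only legitimate under the $G$-invariance hypothesis on $\xi$.
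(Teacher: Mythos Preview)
Your proof is correct and follows essentially the same route as the paper's: both apply Weil's formula to the integrand $F(y)=\psi(y)\phi(y^{-1}x)$ and then use the covariance of $\psi$ and the $G$-invariance of $\xi$ (via Remark~\ref{left.cov.inv.xi}) to see that the inner $N$-integral reduces to $\lambda_N(N)\psi(y)\phi(y^{-1}x)$. The only cosmetic difference is that the paper packages the inner cancellation by citing Proposition~\ref{main.wd}, whereas you reproduce that two-line computation inline.
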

\begin{proof}
Let $N$ be a compact normal subgroup of $G$. Then, using Proposition 3.1(1) of \cite{AGHF.ch.com}, we get $\mathcal{C}_\xi(G,N)\subseteq\mathcal{C}_c(G)$. Also, using compactness of $N$, each Haar measure of $N$ is finite. Let $x\in G$ be given. Using Proposition \ref{main.wd} and Weil's formula (\ref{6}), we obtain  
\begin{align*}
\psi\ast_G\phi(x)&=\int_G\psi(y)\phi(y^{-1}x)\dd\lambda_{G}(y)
=\int_{G/N}\int_N\psi(ys)\phi(s^{-1}y^{-1}x)\dd\lambda_N(s)\dd\lambda_{G/N}(yN)
\\&=\int_{G/N}\int_N\psi(y)\phi(y^{-1}x)\dd\lambda_N(s)\dd\lambda_{G/N}(yN)
=\int_{G/N}\left(\int_N\dd\lambda_N(s)\right)\psi(y)\phi(y^{-1}x)\dd\lambda_{G/N}(yN)
\\&=\lambda_N(N)\int_{G/N}\psi(y)\phi(y^{-1}x)\dd\lambda_{G/N}(yN)
=\lambda_N(N)\psi\natural\phi(x).
\end{align*}
\end{proof}

\begin{remark}
Let $N$ be a compact normal subgroup of $G$. Suppose that $\lambda_{G/N}$ is the left Haar measure  normalized with respect to the probability measure $\lambda_N$ of $N$. Then using Proposition \ref{cov.conv.Ncom} we get $\psi\natural \phi=\psi\ast_{G}\phi$. In this case, harmonic analysis on covariant function algebras of characters of compact subgroups studied for different cases in \cite{AGHF.ch.com, AGHF.CJM} and references therein.
\end{remark}

Next result gives a canonical connection of the covariant convolution defined by (\ref{tc.main.central}) with the convolution in $L^1(G)$.

\begin{theorem}\label{N.normal.thm}
Let $G$ be a locally compact group, $N$ be a closed normal subgroup of $G$, and $\xi\in\Gamma(G,N)$.
Suppose $f,g\in\mathcal{C}_c(G)$ are given. Then 
\begin{equation*}
T_\xi(f\ast_Gg)=T_\xi(f)\natural T_\xi(g).
\end{equation*}
\end{theorem}
\begin{proof}
Suppose $x\in G$ is given. 
Applying Theorem 4.1 of \cite{AGHF.ch.normal.mod}, we have
\begin{equation}\label{NTHalt1}
T_\xi(f\ast_G g)(x)
=\int_Gf(y)T_\xi(g)(y^{-1}x)\dd\lambda_G(y).
\end{equation}
Since $\xi\in\Gamma(G,N)$,  Proposition \ref{J.xi.inv.betaN}(2) implies that 
\begin{equation}\label{NTHalt2}
T_\xi(g)(s^{-1}y^{-1}x)=\overline{\xi(s)}T_\xi(g)(y^{-1}x),
\end{equation}
for every $s\in N$ and $y\in G$.   Therefore, using 
Weil's formula (\ref{6}),  (\ref{NTHalt1}) and (\ref{NTHalt2}), we achieve  
\begin{align*}
\int_Gf(y)T_\xi(g)(y^{-1}x)\dd\lambda_G(y)
&=\int_{G/N}\left(\int_Nf(ys)T_\xi(g)((ys)^{-1}x)\dd\lambda_{N}(s)\right)\dd\lambda_{G/N}(yN)
\\&=\int_{G/N}\left(\int_Nf(ys)T_\xi(g)(s^{-1}y^{-1}x)\dd\lambda_{N}(s)\right)\dd\lambda_{G/N}(yN)
\\&=\int_{G/N}\left(\int_Nf(ys)\overline{\xi(s)}T_\xi(g)(y^{-1}x)\dd\lambda_{N}(s)\right)\dd\lambda_{G/N}(yN)
\\&=\int_{G/N}\left(\int_Nf(ys)\overline{\xi(s)}\dd\lambda_{N}(s)\right)T_\xi(g)(y^{-1}x)\dd\lambda_{G/N}(yN)
\\&=\int_{G/N}T_\xi(f)(y)T_\xi(g)(y^{-1}x)\dd\lambda_{G/N}(yN)=T_\xi(f)\natural T_\xi(g)(x).
\end{align*}
\end{proof}

\begin{corollary}
{\it Let $G$ be a locally compact group, $N$ be a compact normal subgroup of $G$, and $\xi\in\Gamma(G,N)$. Suppose $\lambda_N$ is the probability measure on $N$ and $f,g\in\mathcal{C}_c(G)$ are given. Then
\[
T_\xi(f\ast_G g)=T_\xi(f)\ast_G T_\xi(g).
\]
}\end{corollary}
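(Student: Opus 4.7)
The plan is to chain together the two main results of this subsection. Theorem \ref{N.normal.thm} already gives the identity $T_\xi(f\ast_G g)=T_\xi(f)\natural T_\xi(g)$ for any $f,g\in\mathcal{C}_c(G)$ under the assumption $\xi\in\Gamma(G,N)$, so the only remaining task is to convert the covariant convolution $\natural$ into the ordinary convolution $\ast_G$ on the right-hand side. That conversion is exactly what Proposition \ref{cov.conv.Ncom} accomplishes in the compact normal case, up to the scalar factor $\lambda_N(N)$.

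First I would note that since $f,g\in\mathcal{C}_c(G)$, the images $T_\xi(f)$ and $T_\xi(g)$ lie in $\mathcal{C}_\xi(G,N)$ (by the remarks after the definition of $T_\xi$ in Section 2, citing Proposition 6.1 of \cite{FollH}), so the two required hypotheses of Proposition \ref{cov.conv.Ncom} — that $\xi$ is $G$-invariant and that both functions are continuous covariant functions — are satisfied. Applying that proposition with $\psi=T_\xi(f)$ and $\phi=T_\xi(g)$ yields
\[
T_\xi(f)\ast_G T_\xi(g)=\lambda_N(N)\,T_\xi(f)\natural T_\xi(g).
\]
Since $\lambda_N$ is the probability Haar measure on the compact normal subgroup $N$, we have $\lambda_N(N)=1$, so $T_\xi(f)\natural T_\xi(g)=T_\xi(f)\ast_G T_\xi(g)$.

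Combining this with Theorem \ref{N.normal.thm} gives
\[
T_\xi(f\ast_G g)=T_\xi(f)\natural T_\xi(g)=T_\xi(f)\ast_G T_\xi(g),
\]
which is the desired identity. There is essentially no obstacle here: the corollary is a straightforward specialization of the two preceding results to the probability-measure normalization in the compact case, and all work was already done in establishing Theorem \ref{N.normal.thm} and Proposition \ref{cov.conv.Ncom}.
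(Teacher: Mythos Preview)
Your proposal is correct and matches the paper's intended argument: the corollary is stated without proof immediately after Theorem \ref{N.normal.thm}, and the implicit justification is precisely the combination of Theorem \ref{N.normal.thm} with Proposition \ref{cov.conv.Ncom} under the normalization $\lambda_N(N)=1$.
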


\begin{proposition}\label{cc.bp}
{\it Let $G$ be a locally compact group, $N$ be a closed normal subgroup of $G$, 
and $\xi\in\Gamma(G,N)$. 
The mapping $\natural:\mathcal{C}_\xi(G,N)\times \mathcal{C}_\xi(G,N)\to \mathcal{C}_\xi(G,N)$ given by 
$(\psi,\varphi)\mapsto\psi\natural\varphi$ is a bilinear product.
}\end{proposition}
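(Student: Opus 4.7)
The plan is to derive the proposition as a direct corollary of Theorem \ref{N.normal.thm} combined with the surjectivity of $T_\xi:\mathcal{C}_c(G)\to\mathcal{C}_\xi(G,N)$ recorded in Section 2.

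First, given $\psi,\varphi\in\mathcal{C}_\xi(G,N)$, I would invoke Proposition 6.1 of \cite{FollH} (quoted in Section 2) to choose $f,g\in\mathcal{C}_c(G)$ with $T_\xi(f)=\psi$ and $T_\xi(g)=\varphi$. Since $\mathcal{C}_c(G)$ is closed under the group convolution $\ast_G$, we have $f\ast_Gg\in\mathcal{C}_c(G)$, hence $T_\xi(f\ast_Gg)\in\mathcal{C}_\xi(G,N)$. Theorem \ref{N.normal.thm} then yields
\begin{equation*}
\psi\natural\varphi=T_\xi(f)\natural T_\xi(g)=T_\xi(f\ast_Gg)\in\mathcal{C}_\xi(G,N),
\end{equation*}
which shows that the map $\natural$ indeed takes values in $\mathcal{C}_\xi(G,N)$. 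In particular this confirms both the continuity of $\psi\natural\varphi$ on $G$ and the compactness of $\mathrm{q}(\mathrm{supp}(\psi\natural\varphi))$ in $G/N$, without requiring any further direct verification.

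For bilinearity, I would simply note that the defining formula (\ref{tc.main.central}) expresses $\psi\natural\varphi(x)$ as an integral which is linear in each of $\psi$ and $\varphi$; thus linearity in each slot is immediate from linearity of the Haar integral on $G/N$.

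There is no real obstacle here: the only potentially delicate points are showing that $\psi\natural\varphi$ is a covariant function of $\xi$ and that its support has compact image in $G/N$, but both are handled uniformly by routing through $T_\xi$ and Theorem \ref{N.normal.thm} rather than by working from the defining integral directly. The well-definedness of the integrand on $G/N$, which would otherwise be the subtle point, has already been established in Proposition \ref{main.wd}.
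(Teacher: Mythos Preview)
Your proposal is correct. In the paper, Proposition \ref{cc.bp} is stated without proof; the author evidently regards it as a routine consequence of the discussion immediately preceding it (the paragraph after \eqref{tc.main.central} establishes that the integral is well-defined, and the remaining properties---covariance, continuity, compactness of the projected support, bilinearity---could be checked directly from the formula). Your argument via the surjectivity of $T_\xi$ and Theorem \ref{N.normal.thm} is an efficient alternative that packages all of these verifications into the single observation $\psi\natural\varphi=T_\xi(f\ast_Gg)\in\mathcal{C}_\xi(G,N)$; this is very much in the spirit of how the paper subsequently uses Theorem \ref{N.normal.thm} (for instance, to deduce associativity in the Corollary that follows, and again in the proof of Proposition \ref{main.prop.cinv}(3)).
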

\begin{remark}
Let $N$ be a closed normal subgroup of $G$ and $\xi\in\Gamma(G,N)$. 
The module action of $\mathcal{C}_c(G)$ on $\mathcal{C}_\xi(G,N)$ given by (4.1) of \cite{AGHF.ch.normal.mod}, satisfies $f\ast_G T_\xi(g)=T_\xi(f)\natural T_\xi(g)$, for every $f,g\in\mathcal{C}_c(G)$.
\end{remark}

We then continue by discussing norm properties of covariant convolutions. 

\begin{theorem}\label{norm.cc}
{\it Let $G$ be a locally compact group and $N$ be a closed normal subgroup of $G$. Suppose $\xi\in\Gamma(G,N)$, $1\le p<\infty$, and $\psi,\phi\in \mathcal{C}_\xi(G,N)$. Then  
\[
\|\psi\natural\phi\|_{(p)}\le\|\psi\|_{(1)}\|\phi\|_{(p)}.
\]
}\end{theorem}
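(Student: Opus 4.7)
The plan is to reduce the inequality to the classical Young-type convolution inequality on the quotient group $G/N$, using the key observation that for any $\psi\in\mathcal{C}_\xi(G,N)$ the function $|\psi|$ is $N$-invariant (since $|\xi(s)|=1$), and hence descends to a well-defined function on $G/N$ whose $L^p(G/N)$-norm equals $\|\psi\|_{(p)}$.

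First I would take absolute values inside the defining integral (\ref{tc.main.central}) to obtain
\[
|\psi\natural\phi(x)|\le\int_{G/N}|\psi(y)|\,|\phi(y^{-1}x)|\,\dd\lambda_{G/N}(yN).
\]
The integrand is well-defined on $G/N$ in the $y$-variable: $|\psi(y)|$ depends only on $yN$, and the function $y\mapsto|\phi(y^{-1}x)|$ depends only on $yN$ as well, because replacing $y$ by $ys$ with $s\in N$ gives $|\phi(s^{-1}y^{-1}x)|=|\xi(s^{-1})\phi(y^{-1}x)|=|\phi(y^{-1}x)|$ (using Remark \ref{left.cov.inv.xi} and normality). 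Writing $\widetilde{|\psi|},\widetilde{|\phi|}\in\mathcal{C}_c(G/N)$ for the induced functions, this reads
\[
|\psi\natural\phi(x)|\le\widetilde{|\psi|}\ast_{G/N}\widetilde{|\phi|}(xN).
\]

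Next I would apply Minkowski's integral inequality (for $p>1$; the case $p=1$ is immediate from Fubini) to the original expression, which yields
\[
\|\psi\natural\phi\|_{(p)}\le\int_{G/N}|\psi(y)|\left(\int_{G/N}|\phi(y^{-1}x)|^p\,\dd\lambda_{G/N}(xN)\right)^{1/p}\dd\lambda_{G/N}(yN).
\]
Here the inner integral is well-defined on $G/N$ in the $x$-variable because $|\phi|$ is $N$-invariant, and by left-translation invariance of $\lambda_{G/N}$ it equals $\|\phi\|_{(p)}^p$. Pulling this constant out gives the desired bound $\|\psi\|_{(1)}\|\phi\|_{(p)}$.

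The only subtlety, and the main thing to check carefully, is the legitimacy of viewing the integrals as integrals over $G/N$ and of applying left-invariance in the $x$-variable after $y^{-1}$ has been applied. This rests entirely on the identity $|\phi(s^{-1}z)|=|\phi(z)|$ for $s\in N$, $z\in G$, which in turn uses the $G$-invariance of $\xi$ via Remark \ref{left.cov.inv.xi}; without $G$-invariance the absolute value would not descend cleanly and the argument would break. Once this is pinned down, the rest is a direct combination of Minkowski's integral inequality and the Haar property of $\lambda_{G/N}$.
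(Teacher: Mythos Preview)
Your proof is correct and follows essentially the same route as the paper: apply Minkowski's integral inequality to the defining integral, then use left-invariance of $\lambda_{G/N}$ to identify the inner integral with $\|\phi\|_{(p)}$. One small remark on your commentary: the identity $|\phi(s^{-1}z)|=|\phi(z)|$ actually holds for covariant functions of \emph{any} character of a normal subgroup (via (\ref{cov.normal}), since $|\xi(z^{-1}s^{-1}z)|=1$), so the descent of absolute values does not itself require $G$-invariance of $\xi$; the $G$-invariance is needed earlier, to make $\psi\natural\phi$ well-defined in the first place (Proposition \ref{main.wd}).
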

\begin{proof}
Let $1\le p<\infty$ and $\psi,\phi\in \mathcal{C}_\xi(G,N)$. Using Minkowski's integral inequality, we have 
\begin{align*}
\left(\int_{G/N}|\psi\natural\phi(x)|^p\dd\lambda_{G/N}(xN)\right)^{1/p}
&=\left(\int_{G/N}\left|\int_{G/N}\psi(y)\phi(y^{-1}x)\dd\lambda_{G/N}(yN)\right|^p\dd\lambda_{G/N}(xN)\right)^{1/p}
\\&\le\int_{G/N}\left(\int_{G/N}\left|\psi(y)\phi(y^{-1}x)\right|^p\dd\lambda_{G/N}(xN) \right)^{1/p}\dd\lambda_{G/N}(yN)
\\&=\int_{G/N}|\psi(y)|\left(\int_{G/N}\left|\phi(y^{-1}x)\right|^p\dd\lambda_{G/N}(xN) \right)^{1/p}\dd\lambda_{G/N}(yN).
\end{align*}
Let $y\in G$ be given. Since $\lambda_{G/N}$ is $G$-invariant, we get 
\[
\left(\int_{G/N}\left|\phi(y^{-1}x)\right|^p\dd\lambda_{G/N}(xN) \right)^{1/p}=\left(\int_{G/N}\left|\phi(x)\right|^p\dd\lambda_{G/N}(yxN) \right)^{1/p}=\|\phi\|_{(p)}.
\]
Therefore, we obtain  
\begin{align*}
\|\psi\natural\phi\|_{(p)}&\le\int_{G/N}|\psi(y)|\left(\int_{G/N}\left|\phi(y^{-1}x)\right|^p\dd\lambda_{G/N}(xN) \right)^{1/p}\dd\lambda_{G/N}(yN)
\le\|\psi\|_{(1)}\|\phi\|_{(p)}.
\end{align*}
\end{proof}

\begin{corollary}
{\it Let $G$ be a locally compact group, 
$N$ be a closed normal subgroup of $G$, and $\xi\in\Gamma(G,N)$. Then, 
$(\mathcal{C}_\xi(G,N),\natural,\|\cdot\|_{(1)})$ is a normed algebra.
}\end{corollary}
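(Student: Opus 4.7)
The plan is to observe that this corollary is essentially an immediate packaging of Proposition \ref{cc.bp} and the $p=1$ case of Theorem \ref{norm.cc}. The definition of a normed algebra requires three ingredients on $\mathcal{C}_\xi(G,N)$: that it is a normed linear space, that $\natural$ is a bilinear product that keeps us inside $\mathcal{C}_\xi(G,N)$, and that the norm is submultiplicative with respect to $\natural$.

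First I would recall that $\mathcal{C}_\xi(G,N)$ is by construction a linear subspace of $\mathcal{C}(G)$, and that the functional $\|\cdot\|_{(1)}$ defined by $\|\psi\|_{(1)} = \int_{G/N} |\psi(y)|\,\dd\lambda_{G/N}(yN)$ is a norm on $\mathcal{C}_\xi(G,N)$ (this uses that $|\psi|$ drops to a well-defined continuous compactly supported function on $G/N$, which was used silently throughout the preceding results). Next I would invoke Proposition \ref{cc.bp} to conclude that $\natural \colon \mathcal{C}_\xi(G,N) \times \mathcal{C}_\xi(G,N) \to \mathcal{C}_\xi(G,N)$ is a well-defined bilinear product, so that $(\mathcal{C}_\xi(G,N), \natural)$ is an algebra.

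Finally, to verify the submultiplicative bound $\|\psi \natural \phi\|_{(1)} \le \|\psi\|_{(1)} \|\phi\|_{(1)}$ I would simply specialize Theorem \ref{norm.cc} to $p=1$: for any $\psi,\phi \in \mathcal{C}_\xi(G,N)$,
\[
\|\psi \natural \phi\|_{(1)} \le \|\psi\|_{(1)} \|\phi\|_{(1)}.
\]
Combining these three points gives that $(\mathcal{C}_\xi(G,N), \natural, \|\cdot\|_{(1)})$ is a normed algebra.

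There is no genuine obstacle here; the content of the corollary has already been established in Proposition \ref{cc.bp} (closure of $\natural$ and bilinearity) and Theorem \ref{norm.cc} (the norm inequality). The only thing worth being careful about is ensuring that $\|\cdot\|_{(1)}$ is indeed a norm (not just a seminorm) on $\mathcal{C}_\xi(G,N)$, which follows from the fact that elements of $\mathcal{C}_\xi(G,N)$ are continuous, so vanishing of $|\psi|$ on $G/N$ locally almost everywhere forces $\psi \equiv 0$ on $G$.
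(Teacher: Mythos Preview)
Your proposal has a genuine gap: you never verify associativity of $\natural$. Proposition~\ref{cc.bp} only asserts that $\natural$ is a bilinear map $\mathcal{C}_\xi(G,N)\times\mathcal{C}_\xi(G,N)\to\mathcal{C}_\xi(G,N)$; it says nothing about $(\psi\natural\phi)\natural\chi=\psi\natural(\phi\natural\chi)$. Your sentence ``so that $(\mathcal{C}_\xi(G,N),\natural)$ is an algebra'' is therefore unjustified. The paper supplies this missing step by invoking Theorem~\ref{N.normal.thm}: since $T_\xi:\mathcal{C}_c(G)\to\mathcal{C}_\xi(G,N)$ is surjective and satisfies $T_\xi(f\ast_G g)=T_\xi(f)\natural T_\xi(g)$, associativity of $\natural$ is inherited from associativity of $\ast_G$ on $\mathcal{C}_c(G)$. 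With that addition, your argument matches the paper's exactly (Proposition~\ref{cc.bp} for bilinearity and closure, Theorem~\ref{N.normal.thm} for associativity, Theorem~\ref{norm.cc} with $p=1$ for submultiplicativity).
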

\begin{proof}
Invoking Proposition \ref{cc.bp}, the mapping $\natural:\mathcal{C}_\xi(G,N)\times \mathcal{C}_\xi(G,N)\to \mathcal{C}_\xi(G,N)$ is a bilinear product. Using Theorem \ref{N.normal.thm}, we deduce that 
the bilinear product $\natural$ is associative as well. Then, Theorem \ref{norm.cc} implies that $(\mathcal{C}_\xi(G,N),\natural,\|\cdot\|_{(1)})$ is a normed algebra.
\end{proof}

\begin{proposition}\label{Jxi.hom.C}
{\it Let $G$ be a locally compact group, $N$ be a closed normal subgroup of $G$, and $\xi\in\Gamma(G,N)$. Then,
\begin{enumerate}
\item $T_\xi:\mathcal{C}_c(G)\to \mathcal{C}_\xi(G,N)$
is a contraction homomorphism of normed algebras.
\item $\mathcal{N}_\xi(G,N)$ is an ideal of $\mathcal{C}_c(G)$.
\end{enumerate}
}\end{proposition}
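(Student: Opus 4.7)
The plan is to observe that both parts of this proposition are essentially corollaries of results already established in the paper, so the proof should be short and organized around invoking them.

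For part (1), I would first note that the contraction property of $T_\xi:(\mathcal{C}_c(G),\|\cdot\|_{L^1(G)})\to(\mathcal{C}_\xi(G,N),\|\cdot\|_{(1)})$ is already recorded in the preliminaries (Theorem 4.4 of \cite{AGHF.ch.normal.main}), so this takes care of the norm estimate with constant $1$. The multiplicative (homomorphism) property is exactly the content of Theorem \ref{N.normal.thm}: for $f,g\in\mathcal{C}_c(G)$,
\begin{equation*}
T_\xi(f\ast_G g)=T_\xi(f)\natural T_\xi(g).
\end{equation*}
Combining these two facts yields that $T_\xi:\mathcal{C}_c(G)\to\mathcal{C}_\xi(G,N)$ is a contraction homomorphism from the normed algebra $(\mathcal{C}_c(G),\ast_G,\|\cdot\|_{L^1(G)})$ to the normed algebra $(\mathcal{C}_\xi(G,N),\natural,\|\cdot\|_{(1)})$ established by the preceding corollary.

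For part (2), $\mathcal{N}_\xi(G,N)$ denotes the kernel of the linear map $T_\xi:\mathcal{C}_c(G)\to\mathcal{C}_\xi(G,N)$, so it is certainly a linear subspace of $\mathcal{C}_c(G)$. To show it is a two-sided ideal with respect to $\ast_G$, I would pick arbitrary $f\in\mathcal{N}_\xi(G,N)$ and $g\in\mathcal{C}_c(G)$ and apply Theorem \ref{N.normal.thm} twice:
\begin{equation*}
T_\xi(f\ast_G g)=T_\xi(f)\natural T_\xi(g)=0\natural T_\xi(g)=0,
\end{equation*}
and likewise $T_\xi(g\ast_G f)=T_\xi(g)\natural T_\xi(f)=0$, so both $f\ast_G g$ and $g\ast_G f$ lie in $\mathcal{N}_\xi(G,N)$.

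Since all the non-trivial work has been carried out in Theorem \ref{N.normal.thm} and in the previously cited contraction result, there is no real obstacle here; the only mildly subtle point is being careful that the bilinear product $\natural$ is indeed defined on the image (which it is, by Proposition \ref{cc.bp}) and that $f\ast_G g$ remains in $\mathcal{C}_c(G)$ so that $T_\xi$ can be applied, which is standard. Thus the proof is essentially two invocations of the convolution-intertwining identity plus the already-established contraction estimate.
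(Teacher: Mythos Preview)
Your proposal is correct and matches the paper's approach: the paper actually states this proposition without proof, treating both parts as immediate consequences of Theorem~\ref{N.normal.thm} together with the contraction property recalled from \cite{AGHF.ch.normal.main}, exactly as you do. The analogous $L^1$ statement (Proposition~\ref{Jxi.hom.L1}) is proved in the paper by the same two ingredients, confirming that your argument is the intended one.
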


Next result extends the covariant convolution defined by (\ref{tc.main.central}) to the Banach spaces $L^p_\xi(G,N)$. 

\begin{theorem}\label{cc.p}
Let $G$ be a locally compact group and $N$ be a closed normal subgroup of $G$. Suppose $\xi\in\Gamma(G,N)$ and $1<p<\infty$. Then
\begin{enumerate}
\item The covariant convolution  
$\natural:\mathcal{C}_\xi(G,N)\times \mathcal{C}_\xi(G,N)\to \mathcal{C}_\xi(G,N)$ 
 has a unique extension to 
$\natural:L^1_\xi(G,N)\times L^1_\xi(G,N)\to L^1_\xi(G,N)$, 
in which the Banach space $L^1_\xi(G,N)$ equipped with the extended covariant convolution is a Banach algebra.
\item The covariant convolution $\natural:\mathcal{C}_\xi(G,N)\times \mathcal{C}_\xi(G,N)\to \mathcal{C}_\xi(G,N)$ has a unique extension to $\natural:L^1_\xi(G,N)\times L^p_\xi(G,N)\to L^p_\xi(G,N)$, in which the Banach space $L^p_\xi(G,N)$ equipped with the extended covariant convolution is a Banach left $L^1_\xi(G,N)$-module.
\end{enumerate}
\end{theorem}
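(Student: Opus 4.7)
The plan is to establish both parts by a standard "extension by density plus norm control" argument, leveraging the machinery already in place.

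\textbf{Stage 1 (existence and uniqueness of the extension).} Theorem \ref{norm.cc} asserts exactly the operator bound
$\|\psi\natural\phi\|_{(p)}\le\|\psi\|_{(1)}\|\phi\|_{(p)}$
for $\psi,\phi\in\mathcal{C}_\xi(G,N)$. Thus $\natural$ is a bounded bilinear map from the normed space $(\mathcal{C}_\xi(G,N),\|\cdot\|_{(1)})\times(\mathcal{C}_\xi(G,N),\|\cdot\|_{(p)})$ into $(\mathcal{C}_\xi(G,N),\|\cdot\|_{(p)})$. Since $\mathcal{C}_\xi(G,N)$ is, by construction, $\|\cdot\|_{(1)}$-dense in $L^1_\xi(G,N)$ and $\|\cdot\|_{(p)}$-dense in $L^p_\xi(G,N)$, I would invoke the standard extension principle for bounded bilinear maps: given $\psi\in L^1_\xi(G,N)$ and $\phi\in L^p_\xi(G,N)$, pick approximating sequences $\psi_n,\phi_n\in\mathcal{C}_\xi(G,N)$ and use the identity $\psi_n\natural\phi_n-\psi_m\natural\phi_m=(\psi_n-\psi_m)\natural\phi_n+\psi_m\natural(\phi_n-\phi_m)$ together with Theorem \ref{norm.cc} to conclude that $\psi_n\natural\phi_n$ is Cauchy in $L^p_\xi(G,N)$; define $\psi\natural\phi$ as its limit. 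A parallel estimate shows independence of the chosen sequences, yielding a well-defined, continuous bilinear map $L^1_\xi(G,N)\times L^p_\xi(G,N)\to L^p_\xi(G,N)$ with the same norm bound. Specialising $p=1$ delivers the convolution in part (1); uniqueness of the extension is automatic from density.

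\textbf{Stage 2 (associativity and module axiom).} For part (1), I would verify associativity first on $\mathcal{C}_\xi(G,N)$ via Theorem \ref{N.normal.thm}. Since $T_\xi:\mathcal{C}_c(G)\to\mathcal{C}_\xi(G,N)$ is surjective and satisfies $T_\xi(f\ast_Gg)=T_\xi(f)\natural T_\xi(g)$, for any $f,g,h\in\mathcal{C}_c(G)$ the associativity of $\ast_G$ on $L^1(G)$ yields
\begin{equation*}
(T_\xi(f)\natural T_\xi(g))\natural T_\xi(h)=T_\xi((f\ast_Gg)\ast_Gh)=T_\xi(f\ast_G(g\ast_Gh))=T_\xi(f)\natural(T_\xi(g)\natural T_\xi(h)).
\end{equation*}
Joint continuity of the extension established in Stage 1 then propagates this identity to all of $L^1_\xi(G,N)$, making $(L^1_\xi(G,N),\natural,\|\cdot\|_{(1)})$ a Banach algebra. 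For part (2), the compatibility identity $(\psi\natural\phi)\natural\eta=\psi\natural(\phi\natural\eta)$ for $\psi,\phi\in L^1_\xi(G,N)$ and $\eta\in L^p_\xi(G,N)$ is proved by exactly the same triple application of Theorem \ref{N.normal.thm} on the dense triple $T_\xi(\mathcal{C}_c(G))^{\times 3}$, combined with Stage 1 continuity; together with the operator norm inequality, this is the definition of a Banach left $L^1_\xi(G,N)$-module.

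\textbf{Main obstacle.} The only genuinely delicate point is Stage 1: one must be attentive to the fact that $L^p_\xi(G,N)$ is introduced as an \emph{abstract} completion of a normed space of functions on $G$, so the extended product $\psi\natural\phi$ is \emph{a priori} only a limit in $L^p_\xi(G,N)$ rather than a pointwise integral. The well-definedness of the extension, and in particular the sequence-independence of the limit, rests entirely on the uniform norm bound of Theorem \ref{norm.cc}; once this is carefully checked, the algebraic structure in Stage 2 is a routine density argument.
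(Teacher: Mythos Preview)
Your proposal is correct and follows essentially the same route as the paper: the paper also extends $\natural$ by density using the norm inequality of Theorem \ref{norm.cc} (your Stage 1), having already recorded associativity on $\mathcal{C}_\xi(G,N)$ via Theorem \ref{N.normal.thm} in the corollary immediately preceding this theorem (your Stage 2). If anything, your write-up is more explicit about the Cauchy splitting and the passage of associativity/module axioms through the limit than the paper's own rather terse argument.
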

\begin{proof}
(1) Let $\psi,\phi\in L^1_\xi(G,N)$. Suppose $(\psi_n),(\phi_n)\subset \mathcal{C}_\xi(G,N)$ with $\psi=\lim_n\psi_n$ and $\phi=\lim_n\phi_n$ in $L^1_\xi(G,N)$. Then $(\psi_n\natural\phi_n)$ is a Cauchy sequence in $L^1_\xi(G,N)$. Since $L^1_\xi(G,N)$ is a Banach space, $(\psi_n\natural\phi_n)$ has a limit in $L^1_\xi(G,N)$. We then define 
$\psi\natural\phi:=\lim_n\psi_n\natural\phi_n$. Then $\psi\natural\phi$ is independent of the choice of the sequences $(\psi_n)$ and $(\phi_n)$.
We also have $\psi\natural\phi\in L^1_\xi(G,N)$ and  
$\|\psi\natural\phi\|_{(1)}\le\|\psi\|_{(1)}\|\phi\|_{(1)}$.
Therefore, the Banach space $L^1_\xi(G,N)$ equipped with the extended covariant convolution $\natural:L^1_\xi(G,N)\times L^1_\xi(G,N)\to L^1_\xi(G,N)$ is a Banach algebra.

(2) Let $\psi\in L^1_\xi(G,N)$ and $\phi\in L^p_\xi(G,N)$. Suppose $(\psi_n),(\phi_n)\subset \mathcal{C}_\xi(G,N)$ with $\psi=\lim_n\psi_n$ in $L^1_\xi(G,N)$ and $\phi=\lim_n\phi_n$ in $L^p_\xi(G,N)$. Then $(\psi_n\natural\phi_n)$ is a Cauchy sequence in $L^p_\xi(G,N)$. Since $L^p_\xi(G,N)$ is a Banach space, $(\psi_n\natural\phi_n)$ has a limit in $L^p_\xi(G,N)$. We then define 
$\psi\natural\phi:=\lim_n\psi_n\natural\phi_n$.
Then $\psi\natural\phi\in L^p_\xi(G,N)$ and $\|\psi\natural\phi\|_{(p)}\le\|\psi\|_{(1)}\|\phi\|_{(p)}$. So the Banach space $L^p_\xi(G,N)$ equipped with the extended covariant convolution $\natural:L^1_\xi(G,N)\times L^p_\xi(G,N)\to L^p_\xi(G,N)$ is a Banach left $L^1_\xi(G,N)$-module.
\end{proof}

\begin{proposition}\label{Jxi.hom.L1}
{\it Let $G$ be a locally compact group, $N$ be a closed normal subgroup of $G$, and $\xi\in\Gamma(G,N)$. Then
\begin{enumerate}
\item $T_\xi:L^1(G)\to L^1_\xi(G,N)$
is a contraction homomorphism of Banach algebras.
\item $\mathcal{N}_\xi^1(G,N)$ is a closed ideal of $L^1(G)$. 
\end{enumerate}
}\end{proposition}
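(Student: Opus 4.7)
My plan is to deduce both claims from the $\mathcal{C}_c$-level identity already established in Theorem \ref{N.normal.thm} by a standard density argument, together with the continuity properties of the maps $T_\xi$, $\ast_G$ and $\natural$ that have been assembled in the paper so far.

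For part (1), the contraction property of $T_\xi : L^1(G) \to L^1_\xi(G,N)$ has already been recalled in Section 2 as the $p=1$ extension of the $\mathcal{C}_c$-contraction (Theorem 5.3 of \cite{AGHF.ch.normal.main}); the only new content is the multiplicativity $T_\xi(f \ast_G g) = T_\xi(f) \natural T_\xi(g)$. Given $f,g \in L^1(G)$, I would pick sequences $(f_n),(g_n) \subset \mathcal{C}_c(G)$ with $f_n \to f$ and $g_n \to g$ in $L^1(G)$. On the left-hand side, joint continuity of $\ast_G$ on $L^1(G) \times L^1(G)$ gives $f_n \ast_G g_n \to f \ast_G g$ in $L^1(G)$, and then contractivity of $T_\xi$ yields $T_\xi(f_n \ast_G g_n) \to T_\xi(f \ast_G g)$ in $L^1_\xi(G,N)$. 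On the right-hand side, $T_\xi(f_n) \to T_\xi(f)$ and $T_\xi(g_n) \to T_\xi(g)$ in $L^1_\xi(G,N)$, and Theorem \ref{cc.p}(1), which provides joint continuity of the extended covariant convolution on $L^1_\xi(G,N) \times L^1_\xi(G,N)$, gives $T_\xi(f_n) \natural T_\xi(g_n) \to T_\xi(f) \natural T_\xi(g)$. Since Theorem \ref{N.normal.thm} gives equality at each stage $n$, uniqueness of limits yields the desired identity.

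For part (2), $\mathcal{N}_\xi^1(G,N)$ is by definition the kernel of the bounded linear map $T_\xi : L^1(G) \to L^1_\xi(G,N)$, and is therefore automatically a closed linear subspace of $L^1(G)$. The ideal property is then immediate from part (1): if $h \in \mathcal{N}_\xi^1(G,N)$ and $f \in L^1(G)$, then
\[
T_\xi(f \ast_G h) = T_\xi(f) \natural T_\xi(h) = T_\xi(f) \natural 0 = 0,
\]
and similarly $T_\xi(h \ast_G f) = 0$, so $\mathcal{N}_\xi^1(G,N)$ is a two-sided closed ideal.

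There is no genuine obstacle here; the only point that requires care is verifying that the joint continuity of $\natural$ on $L^1_\xi(G,N) \times L^1_\xi(G,N)$ used above really follows from the norm bound in Theorem \ref{cc.p}(1), which it does via the standard $\|\psi_n \natural \phi_n - \psi \natural \phi\|_{(1)} \le \|\psi_n - \psi\|_{(1)} \|\phi_n\|_{(1)} + \|\psi\|_{(1)} \|\phi_n - \phi\|_{(1)}$ decomposition.
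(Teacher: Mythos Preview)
Your proof is correct and follows essentially the same approach as the paper: a density argument using Theorem \ref{N.normal.thm} at the $\mathcal{C}_c$-level, continuity of $T_\xi$ and of both convolutions, and then deducing (2) as the kernel of a bounded algebra homomorphism. If anything, your write-up is slightly more explicit than the paper's in justifying the joint continuity of $\natural$ and the two-sided ideal property.
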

\begin{proof}
(1) Theorem 4.4 of \cite{AGHF.ch.normal.main} guarantees contraction property of $T_\xi:L^1(G)\to L^1_\xi(G,N)$. Let $f,g\in L^1(G)$. Suppose $(f_n),(g_n)\subset\mathcal{C}_c(G)$ with $f=\lim_n f_n$ and $g=\lim_n g_n$ in $L^1(G)$. Then $f\ast_G g=\lim_n f_n\ast_G g_n$. Therefore, using continuity of $T_\xi$ and Theorem \ref{N.normal.thm}, we obtain 
\begin{align*}
T_\xi(f\ast_G g)&=T_\xi(\lim_n f_n\ast_G g_n)
=\lim_nT_\xi(f_n\ast_G g_n)
\\&=\lim_n T_\xi(f_n)\natural T_\xi(g_n)=T_\xi(f)\natural T_\xi(g).
\end{align*}
(2) It follows from (1).
\end{proof}

\begin{theorem}
Let $G$ be a locally compact group, $N$ be a closed normal subgroup of $G$, and $\xi\in\Gamma(G,N)$. The covariant Banach function algebra 
$L^1_\xi(G,N)$ possesses a two-sided approximate identity. In particular, if $G$ is second-countable, $L^1_\xi(G,N)$ possesses a two-sided sequential approximate identity.
\end{theorem}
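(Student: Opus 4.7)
The plan is to transport an approximate identity along the surjective Banach algebra homomorphism $T_\xi : L^1(G) \to L^1_\xi(G,N)$ provided by Proposition \ref{Jxi.hom.L1} together with Theorem 5.5 of \cite{AGHF.ch.normal.main} (surjectivity onto $L^1_\xi(G,N)$). Since $L^1(G)$ already has a two-sided approximate identity $\mathcal{E}=(u_\alpha)_{\alpha\in\Lambda}$ as recalled in Section 2, the candidate is $\mathcal{E}_\xi:=(T_\xi(u_\alpha))_{\alpha\in\Lambda}$ in $L^1_\xi(G,N)$.

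To verify this works, I would fix an arbitrary $\psi\in L^1_\xi(G,N)$ and, using surjectivity of $T_\xi$, write $\psi=T_\xi(g)$ for some $g\in L^1(G)$. Applying the homomorphism identity from Proposition \ref{Jxi.hom.L1}(1), one computes
\[
T_\xi(u_\alpha)\natural \psi = T_\xi(u_\alpha)\natural T_\xi(g) = T_\xi(u_\alpha\ast_G g),
\]
and similarly $\psi\natural T_\xi(u_\alpha)=T_\xi(g\ast_G u_\alpha)$. Because $u_\alpha\ast_G g\to g$ and $g\ast_G u_\alpha\to g$ in $L^1(G)$, and $T_\xi$ is norm-decreasing (hence continuous) from $L^1(G)$ onto $L^1_\xi(G,N)$, continuity yields $T_\xi(u_\alpha)\natural\psi\to T_\xi(g)=\psi$ and $\psi\natural T_\xi(u_\alpha)\to\psi$ in $\|\cdot\|_{(1)}$. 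Thus $\mathcal{E}_\xi$ is a two-sided approximate identity for $L^1_\xi(G,N)$.

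For the sequential statement, I would invoke the fact from Section 2 that whenever $G$ is second-countable, $L^1(G)$ admits a sequential two-sided approximate identity $(u_n)_{n\in\mathbb{N}}$. Applying the same argument to this sequence produces the desired sequential two-sided approximate identity $(T_\xi(u_n))_{n\in\mathbb{N}}$ for $L^1_\xi(G,N)$.

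The main obstacle here is conceptual rather than computational: everything hinges on the surjectivity of $T_\xi:L^1(G)\to L^1_\xi(G,N)$, which allows an arbitrary element of the target algebra to be lifted so that the multiplicative identity $T_\xi(f\ast_G g)=T_\xi(f)\natural T_\xi(g)$ can be exploited. Once that identity and surjectivity are granted, the transfer of approximate identities along a continuous surjective algebra homomorphism is automatic, and no delicate $\varepsilon$-estimates specific to covariant functions are needed.
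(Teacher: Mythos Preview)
Your proposal is correct and follows essentially the same approach as the paper: push an approximate identity of $L^1(G)$ forward through the continuous surjective algebra homomorphism $T_\xi$, using Proposition~\ref{Jxi.hom.L1}(1) for multiplicativity and the surjectivity recalled from \cite{AGHF.ch.normal.main} to lift an arbitrary $\psi$; the sequential case is handled identically. The paper's argument differs only in notation.
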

\begin{proof}
Invoking Proposition 3.5.9 of \cite{50}, let $\mathcal{E}:=(f_\alpha)_{\alpha\in\Lambda}$ be a two-sided approximate identity for the Banach algebra $L^1(G)$. Then, for each $f\in L^1(G)$, we have 
\begin{equation}\label{api.G}
f=\lim_\alpha f\ast_G f_\alpha=\lim_\alpha f_\alpha\ast_G f.
\end{equation}
For every $\alpha\in\Lambda$, define $\psi_\alpha:=T_\xi(f_\alpha)$. We then have 
$\mathcal{E}_\xi:=(\psi_\alpha)_{\alpha\in\Lambda}\subset L^1_\xi(G,N)$.
Suppose $\psi\in L^1_\xi(G,N)$ is arbitrary. Let $f\in L^1(G)$ with $T_\xi(f)=\psi$. Using continuity of $T_\xi$, Proposition \ref{Jxi.hom.L1}(1), and (\ref{api.G}), we get 
\begin{align*}
\psi&=T_\xi(\lim_\alpha f\ast_G f_\alpha)
=\lim_\alpha T_\xi(f\ast_G f_\alpha)
\\&=\lim_\alpha T_\xi(f)\natural T_\xi(f_\alpha)
=\lim_\alpha\psi\natural\psi_\alpha.
\end{align*}
The same argument implies that $\lim_\alpha\psi\natural\psi_\alpha=\psi$ as well, which completes the proof. If $G$ is second-countable, we can pick a sequential two-sided approximate identity for the Banach algebra $L^1(G)$ as $\mathcal{E}$. In this case, $\mathcal{E}_\xi$ constitutes a sequential two-sided approximate identity for the covariant Banach algebra $L^1_\xi(G,N)$.
\end{proof}

We then conclude the following explicit construction for the extended covariant convolution given by Theorem \ref{cc.p}.
 
\begin{theorem}
{\it Let $G$ be a locally compact group and $N$ be a closed normal subgroup of $G$. Suppose $\xi\in\Gamma(G,N)$, $1\le p<\infty$, $\phi\in L^1_\xi(G,N)$, and $\psi\in L^p_\xi(G,N)$. Then 
\begin{equation}\label{psi*pshiLpFormula}
\phi\natural\psi(x)=\int_{G/N}\phi(y)\psi(y^{-1}x)\dd\lambda_{G/N}(yN)\ \ \ \ \ {\rm for\ \ l.a.e.}\ x\in G.
\end{equation}
}\end{theorem}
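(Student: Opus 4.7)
The plan is to identify the right-hand side of (\ref{psi*pshiLpFormula}) with a bounded bilinear operator defined on $L^1_\xi(G,N)\times L^p_\xi(G,N)$, and then invoke uniqueness of the extension from Theorem \ref{cc.p}. First I would define, for representatives $\phi\in L^1_\xi(G,N)$ and $\psi\in L^p_\xi(G,N)$, the candidate
\[
\mathfrak{B}(\phi,\psi)(x):=\int_{G/N}\phi(y)\psi(y^{-1}x)\dd\lambda_{G/N}(yN).
\]
The very first thing to check is that the integrand $y\mapsto\phi(y)\psi(y^{-1}x)$ descends to a well-defined measurable function on $G/N$ for locally almost every $x\in G$. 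This is the same computation as in Proposition \ref{main.wd}, only applied to representatives rather than continuous functions: since $N$ is normal and $\xi\in\Gamma(G,N)$, Remark \ref{left.cov.inv.xi} gives $\phi(ys)=\xi(s)\phi(y)$ and $\psi(s^{-1}y^{-1}x)=\overline{\xi(s)}\psi(y^{-1}x)$ for $s\in N$, so the product is $N$-invariant in $y$. One also verifies that $x\mapsto\mathfrak{B}(\phi,\psi)(x)$ satisfies the covariance identity $\mathfrak{B}(\phi,\psi)(xk)=\xi(k)\mathfrak{B}(\phi,\psi)(x)$ by applying left invariance of $\lambda_{G/N}$ (substituting $y\mapsto y$ and noting $y^{-1}xk=(yk^{-1})^{-1}x\cdot k\cdot k^{-1}$ is unnecessary — it follows directly since $k\in N$ only shifts inside a single coset).

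Next I would establish the norm bound $\|\mathfrak{B}(\phi,\psi)\|_{(p)}\le\|\phi\|_{(1)}\|\psi\|_{(p)}$ by exactly the Minkowski integral inequality argument used in Theorem \ref{norm.cc}, together with left $G$-invariance of $\lambda_{G/N}$. This in particular shows that $\mathfrak{B}(\phi,\psi)\in L^p_\xi(G,N)$, that the integral defining $\mathfrak{B}(\phi,\psi)(x)$ converges absolutely for l.a.e.\ $x\in G$ (via Fubini applied to $\phi(y)\psi(y^{-1}x)$ on $G/N\times G/N$), and that $\mathfrak{B}$ is independent of the choice of representatives of $\phi$ and $\psi$. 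Consequently $\mathfrak{B}:L^1_\xi(G,N)\times L^p_\xi(G,N)\to L^p_\xi(G,N)$ is a bounded bilinear map.

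Finally, on the dense subspace $\mathcal{C}_\xi(G,N)\times\mathcal{C}_\xi(G,N)$, $\mathfrak{B}$ coincides pointwise with $\natural$ by the very definition (\ref{tc.main.central}). By Theorem \ref{cc.p}, the covariant convolution admits a unique bounded bilinear extension from $\mathcal{C}_\xi(G,N)\times\mathcal{C}_\xi(G,N)$ to $L^1_\xi(G,N)\times L^p_\xi(G,N)$; hence $\natural=\mathfrak{B}$ there, which is precisely (\ref{psi*pshiLpFormula}) interpreted as an equality l.a.e.\ on $G$.

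The main obstacle is the bookkeeping in the first paragraph: the identification of $L^p_\xi(G,N)$ with locally a.e.\ equivalence classes of functions on $G$ (Remark 4.10 of \cite{AGHF.ch.normal.mod}) means that statements such as ``$\phi(ys)=\xi(s)\phi(y)$'' and the $N$-invariance of the integrand only hold off a locally null set, and one has to check via Fubini (justified by the Minkowski bound) that for l.a.e.\ $x\in G$ the exceptional set in $y$ is $\lambda_{G/N}$-null, so that the integral formula is meaningful and independent of the chosen representatives. Once this measurability/well-definedness is secured, the rest is a routine density-and-uniqueness argument.
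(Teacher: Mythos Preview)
Your proposal is correct and takes a genuinely different route from the paper. The paper argues by choosing approximating sequences $(\phi_n),(\psi_n)\subset\mathcal{C}_\xi(G,N)$ with geometric convergence rates $\|\phi_n-\phi\|_{(1)},\|\psi_n-\psi\|_{(p)}<2^{-(n+2)}$; these rates are used (via Remark~4.10 of \cite{AGHF.ch.normal.mod}) to upgrade the $L^p$-convergence $\phi_n\natural\psi_n\to\phi\natural\psi$ to pointwise l.a.e.\ convergence, and a separate Minkowski estimate on $\int_{G/N}|\phi_n(y)\psi_n(y^{-1}x)-\phi(y)\psi(y^{-1}x)|\dd\lambda_{G/N}(yN)$ shows that $\phi_n\natural\psi_n(x)$ also converges l.a.e.\ to the right-hand integral in (\ref{psi*pshiLpFormula}); comparing the two limits gives the result. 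You instead define the integral formula directly as a bounded bilinear map $\mathfrak{B}$ via the Tonelli/Minkowski bound, and then identify $\mathfrak{B}=\natural$ by agreement on $\mathcal{C}_\xi(G,N)\times\mathcal{C}_\xi(G,N)$ and the uniqueness clause of Theorem~\ref{cc.p}. Your approach is closer to the standard proof of Young's inequality on locally compact groups and sidesteps the explicit rate tracking; the paper's approach makes the pointwise l.a.e.\ identification of representatives completely explicit without appealing to uniqueness of the extension. The measure-theoretic caveat you flag in your last paragraph is exactly the one point needing care in your argument, and it is handled by the Tonelli step applied to $|\phi(y)||\psi(y^{-1}x)|$ on $(G/N)\times(G/N)$ before invoking Minkowski.
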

\begin{proof}
Let $\phi\in L^1_\xi(G,N)$ and $\psi\in L^p_\xi(G,N)$. 
Suppose $(\phi_n),(\psi_n)\subset \mathcal{C}_\xi(G,N)$ with $\|\phi_n-\phi\|_{(1)}<2^{-(n+2)}$ and $\|\psi_n-\psi\|_{(p)}<2^{-(n+2)}$ for $n\in\mathbb{N}$. Then $\phi\natural\psi=\lim_n\phi_n\natural\psi_n$ in $L^p_\xi(G,N)$. Suppose $M>0$ with $\|\phi_n\|_{(1)}<M$ and $\|\psi_n\|_{(p)}<M$ for every $n\in\mathbb{N}$. So, $\|\phi_{n+1}\natural\psi_{n+1}-\phi_n\natural\psi_n\|_{(p)}<M2^{-n}$ for $n\in\mathbb{N}$. Therefore, $\phi\natural\psi(x)=\lim_n\phi_n\natural\psi_n(x)$ for l.a.e. $x\in G$, according to Remark 3.6 of \cite{AGHF.ch.normal.mod}. Then    
\begin{align*}
&\left(\int_{G/N}\left(\int_{G/N}|\phi_{n}(y)\psi_{n}(y^{-1}x)-\phi(y)\psi(y^{-1}x)|\dd\lambda_{G/N}(yN)\right)^p\dd\lambda_{G/N}(xN)\right)^{1/p}
\\&\le\int_{G/N}\left(\int_{G/N}|\phi_{n}(y)\psi_{n}(y^{-1}x)-\phi(y)\psi(y^{-1}x)|^p\dd\lambda_{G/N}(xN)\right)^{1/p}\dd\lambda_{G/N}(yN)
\\&\le \|\phi_{n}\|_{(1)}\|\psi_{n}-\psi\|_{(p)}+\|\phi_{n}-\phi\|_{(1)}\|\psi\|_{(p)}<M2^{-(n+1)},
\end{align*}
which implies that  
\[
\int_{G/N}\left(\int_{G/N}|\phi_{n}(y)\psi_{n}(y^{-1}x)-\phi(y)\psi(y^{-1}x)|\dd\lambda_{G/N}(yN)\right)^p\dd\lambda_{G/N}(xN)<M^p2^{-p(n+1)},
\]
for $n\in\mathbb{N}$. Therefore, 
\[
\lim_n\left(\int_{G/N}|\phi_{n}(y)\psi_{n}(y^{-1}x)-\phi(y)\psi(y^{-1}x)|\dd\lambda_{G/N}(yN)\right)^p=0,
\]
for a.e. $xN\in G/N$. Since 
\[
\left|\phi_{n}\natural\psi_{n}(x)-\int_{G/N}\phi(y)\psi(y^{-1}x)\dd\lambda_{G/N}(yN)\right|\le\int_{G/N}|\phi_{n}(y)\psi_{n}(y^{-1}x)-\phi(y)\psi(y^{-1}x)|\dd\lambda_{G/N}(yN),
\]
we get   
\[
\lim_n\left|\phi_{n}\natural\psi_{n}(x)-\int_{G/N}\phi(y)\psi(y^{-1}x)\dd\lambda_{G/N}(yN)\right|=0,
\]
for a.e. $xN\in G/N$. This implies that the right hand side of  (\ref{psi*pshiLpFormula}) is well-defined as a function on $G$, for l.a.e $x\in G$. We also obtain   
\[
\lim_n\phi_{n}\natural\psi_{n}(x)=\int_{G/N}\phi(y)\psi(y^{-1}x)\dd\lambda_{G/N}(yN),
\]
for l.c.a $x\in G$. Then, for l.c.a $x\in G$, we achieve  
\[
\phi\natural\psi(x)=\lim_n\phi_{n}\natural\psi_{n}(x)=\int_{G/N}\phi(y)\psi(y^{-1}x)\dd\lambda_{G/N}(yN).
\]
\end{proof}
\begin{remark}
Let $N$ be a closed normal subgroup of $G$ and $\xi\in\Gamma(G,N)$. 
The module action of $L^1(G)$ on $L^1_\xi(G,N)$ given by (4.3) of \cite{AGHF.ch.normal.mod}, satisfies $f\ast_G T_\xi(g)=T_\xi(f)\natural T_\xi(g)$, for every $f,g\in L^1(G)$.
\end{remark}
\subsection{Covariant Involutions}
We then present the abstract notion of covariant involutions for covariant functions associated to invariant characters of normal subgroups. 
Throughout, we still assume that $G$ is a locally compact group with a left Haar measure $\lambda_G$ and $N$ is a closed normal subgroup of $G$ with a left Haar measure $\lambda_N$. Also, suppose that $\lambda_{G/N}$ is the left Haar measure on the quotient group $G/N$ normalized with respect to Weil's formula (\ref{6}) and $\xi\in\Gamma(G,N)$. 

For $\psi\in \mathcal{C}_\xi(G,N)$, define the {\it covariant involution} 
$\psi^\natural:G\to\mathbb{C}$ via 
\begin{equation}\label{ci.main.normal}
\psi^\natural(x):=\Delta_{G/N}(x^{-1}N)\overline{\psi(x^{-1})}=\frac{\sigma_N(x)}{\Delta_G(x)}\overline{\psi(x^{-1})},
\end{equation}
for all $x\in G$.

\begin{remark}
Let $\xi=1$ be the trivial character of the normal subgroup $N$. Suppose that  
$\psi\in \mathcal{C}_1(G,N)$ is identified with $\widetilde{\psi}\in\mathcal{C}_c(G/N)$. Then covariant involution $\psi^\natural$ coincides with the standard involution of $\widetilde{\psi}$ on $G/N$. 
\end{remark}
\begin{remark}
Let $N$ be a compact normal subgroup of $G$. Then $\sigma_N=1$.
Therefore, for every $\xi\in\Gamma(G,N)$ and $\psi\in \mathcal{C}_\xi(G,N)\subseteq\mathcal{C}_c(G)$ we get $\psi^\natural=\psi^{\ast^G}$.
\end{remark}

\begin{remark}\label{tws.inv}
The covariant involution (\ref{ci.main.normal}) gives a universal characterization of twisted involutions on $G/N$ associated to certain class of multipliers,  if there is a Borel cross-section for $G/N$ in $G$, see \cite{MackActa1958, LepMAn1965,LepInv1-1967, LepInv2-1967,LepInv1968,BuSmTrans1970}.   Suppose that $\mathrm{s}:G/N\to G$ is a Borel cross-section for $G/N$ in $G$ and $\mathrm{r}:G\to N$ is given by $\mathrm{r}(x):=\mathrm{s}(xN)^{-1}x$, for every $x\in G$.  Assume that the multiplier $\beta:G/N\times G/N\to\mathbb{T}$ is given by 
\[
\beta(xN,yN):=\overline{\xi(\mathrm{s}(xyN)^{-1}\mathrm{s}(xN)\mathrm{s}(yN))},
\]
for every $xN,yN\in G/N$. Then 
\[
\psi^\natural(x)=\xi(\mathrm{r}(x))\psi_\mathrm{s}^{\ast_\beta}(xN),
\]
for every $\psi\in\mathcal{C}_\xi(G,N)$ and $x\in G$, where $\psi_\mathrm{s}:=\psi\circ\mathrm{s}$ and $^{\ast_\beta}$ is the twisted involution associated to $\beta$, see \cite{MackActa1958, LepMAn1965,LepInv1-1967, LepInv2-1967,LepInv1968,BuSmTrans1970}.
\end{remark}

Next result gives a canonical connection of the covariant involution 
defined by (\ref{ci.main.normal}) with the involution 
in $L^1(G)$ via the linear operator $T_\xi$.

\begin{theorem}\label{N.normal.thm.cinv}
Let $G$ be a locally compact group and $N$ be a closed normal subgroup of $G$. Suppose $\xi\in\Gamma(G,N)$ and $f\in\mathcal{C}_c(G)$. Then 
\begin{equation*}
T_\xi(f^{\ast^G})=T_\xi(f)^\natural.
\end{equation*}
\end{theorem}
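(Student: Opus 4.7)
The plan is to compute $T_\xi(f^{\ast^G})(x)$ directly and reduce it, through Haar-measure bookkeeping on $N$, to the definition of $\psi^\natural$ applied to $\psi = T_\xi(f)$. Since $\xi$ is $G$-invariant, Proposition \ref{J.xi.inv.betaN}(1) gives the convenient representation $T_\xi(h)(x) = \sigma_N(x)\int_N h(sx)\overline{\xi(s)}\,\dd\lambda_N(s)$, so I would substitute $h = f^{\ast^G}$ and expand $f^{\ast^G}(sx) = \Delta_G(x^{-1}s^{-1})\overline{f(x^{-1}s^{-1})}$.

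The first simplification is that multiplicativity of $\Delta_G$ lets me factor $\Delta_G(x^{-1})$ out of the integral, leaving a $\Delta_G(s^{-1})$ inside. The key modular-function identity I invoke is $\Delta_G|_N = \Delta_N$, which follows from the formula $\Delta_G(x) = \sigma_N(x)\Delta_{G/N}(xN)$ recorded in Section 2 together with $\sigma_N|_N = \Delta_N$ and $\Delta_{G/N}(sN) = 1$ for $s\in N$. This rewrites $\Delta_G(s^{-1})$ as $\Delta_N(s^{-1})$, which is the factor needed to convert the left Haar integral over $N$ into a right Haar integral.

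Next I would perform the substitution $s \mapsto s^{-1}$ in the $\lambda_N$-integral. By the standard left-Haar inversion rule on $N$ this introduces an extra factor $\Delta_N(s^{-1})$ which exactly cancels the one already present; simultaneously $\overline{\xi(s^{-1})} = \xi(s)$, and the argument of $f$ becomes $x^{-1}s$. Pulling the complex conjugate outside, the remaining integral collapses to $\overline{\int_N f(x^{-1}s)\overline{\xi(s)}\,\dd\lambda_N(s)} = \overline{T_\xi(f)(x^{-1})}$ by the original definition of $T_\xi$.

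Collecting the prefactors yields
\[
T_\xi(f^{\ast^G})(x) = \sigma_N(x)\Delta_G(x^{-1})\overline{T_\xi(f)(x^{-1})} = \frac{\sigma_N(x)}{\Delta_G(x)}\overline{T_\xi(f)(x^{-1})},
\]
which is precisely the definition of $T_\xi(f)^\natural(x)$ given by (\ref{ci.main.normal}). The main obstacle is the bookkeeping of modular factors through the inversion substitution; in particular, recognising $\Delta_G(s^{-1})$ as $\Delta_N(s^{-1})$ so that it annihilates the Jacobian produced by $s \mapsto s^{-1}$ is what makes the computation close, and everything else is formal manipulation using $G$-invariance of $\xi$.
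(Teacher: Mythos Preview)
Your proof is correct and follows essentially the same route as the paper's: both rely on the identity $\Delta_G|_N=\Delta_N$, the inversion $s\mapsto s^{-1}$ on $N$, and the two representations of $T_\xi$ (the defining one and Proposition~\ref{J.xi.inv.betaN}(1)). The only cosmetic difference is the order in which the two forms of $T_\xi$ are invoked --- you begin with the $\sigma_N$-weighted form and finish with the original definition, whereas the paper does the reverse.
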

\begin{proof}
Let $f\in\mathcal{C}_c(G)$ and $x\in G$. We then have 
\begin{align*}
T_\xi(f^{\ast^G})(x)&=\int_Nf^{*_G}(xs)\overline{\xi(s)}\dd\lambda_N(s)
=\int_N\overline{f((xs)^{-1})}\Delta_G((xs)^{-1})\overline{\xi(s)}\dd\lambda_N(s)
\\&=\int_N\overline{f(s^{-1}x^{-1})}\Delta_G(s^{-1}x^{-1})\overline{\xi(s)}\dd\lambda_N(s)=\Delta_G(x^{-1})\int_N\overline{f(s^{-1}x^{-1})}\Delta_G(s^{-1})\overline{\xi(s)}\dd\lambda_N(s).
\end{align*}
Replacing $s$ to $s^{-1}$, we get 
\begin{align*}
T_\xi(f^{\ast^G})(x)&=\Delta_G(x^{-1})\int_N\overline{f(s^{-1}x^{-1})}\Delta_G(s^{-1})\overline{\xi(s)}\dd\lambda_N(s)
\\&=\Delta_G(x^{-1})\int_N\overline{f(sx^{-1})}\Delta_G(s)\xi(s)\dd\lambda_N(s^{-1})
\\&=\Delta_G(x^{-1})\int_N\overline{f(sx^{-1})}\Delta_G(s)\Delta_N(s^{-1})\xi(s)\dd\lambda_N(s).
\end{align*}
Since $N$ is normal in $G$, we have $\Delta_G|_N=\Delta_N$, see Proposition 3.3.17 of \cite{50}. Therefore,  
\begin{align*}
T_\xi(f^{\ast^G})(x)&=\Delta_G(x^{-1})\int_N\overline{f(sx^{-1})}\Delta_G(s)\Delta_N(s^{-1})\xi(s)\dd\lambda_N(s)
\\&=\Delta_G(x^{-1})\int_N\overline{f(sx^{-1})}\Delta_N(s)\Delta_N(s^{-1})\xi(s)\dd\lambda_N(s)=\Delta_G(x^{-1})\int_N\overline{f(sx^{-1})}\xi(s)\dd\lambda_N(s).
\end{align*}
Since $\xi$ is $G$-invariant, using Proposition \ref{J.xi.inv.betaN}(1), we get 
\begin{align*}
T_\xi(f^{\ast^G})(x)&=\Delta_G(x^{-1})\int_N\overline{f(sx^{-1})}\xi(s)\dd\lambda_N(s)
\\&=\Delta_G(x^{-1})\sigma_N(x)\int_N\overline{f(x^{-1}s)}\xi(s)\dd\lambda_N(s)=\Delta_G(x^{-1})\sigma_N(x)\overline{T_\xi(f)(x^{-1})}=T_\xi(f)^\natural(x).
\end{align*}
\end{proof}

\begin{corollary}
{\it Let $G$ be a locally compact group and $N$ be a compact normal subgroup of $G$. Suppose $\xi\in\Gamma(G,N)$ and $f\in\mathcal{C}_c(G)$. Then 
\begin{equation*}
T_\xi(f^{\ast^G})=T_\xi(f)^{\ast^G}.
\end{equation*}
}\end{corollary}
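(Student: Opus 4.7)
The plan is to deduce the corollary directly from Theorem \ref{N.normal.thm.cinv} together with the remark immediately preceding it, which states that when $N$ is compact one has $\sigma_N \equiv 1$ and hence $\psi^\natural = \psi^{\ast^G}$ for every $\psi \in \mathcal{C}_\xi(G,N)$.

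More concretely, I would proceed as follows. First, I observe that since $N$ is compact, by the remark recalled above $\sigma_N$ is trivial, so the explicit formula (\ref{ci.main.normal}) reduces to
\[
\psi^\natural(x) = \frac{1}{\Delta_G(x)}\overline{\psi(x^{-1})} = \Delta_G(x^{-1})\overline{\psi(x^{-1})} = \psi^{\ast^G}(x),
\]
for every $x \in G$ and every $\psi \in \mathcal{C}_\xi(G,N)$; note also that compactness of $N$ guarantees $\mathcal{C}_\xi(G,N) \subseteq \mathcal{C}_c(G)$, so the right-hand side is a legitimate element of the $L^1(G)$ involution. Applying this identity to $\psi := T_\xi(f) \in \mathcal{C}_\xi(G,N)$ gives $T_\xi(f)^\natural = T_\xi(f)^{\ast^G}$.

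Next, I invoke Theorem \ref{N.normal.thm.cinv}, which asserts $T_\xi(f^{\ast^G}) = T_\xi(f)^\natural$ for all $f \in \mathcal{C}_c(G)$ under the mere assumption that $N$ is a closed normal subgroup and $\xi \in \Gamma(G,N)$; these hypotheses hold in particular in the compact-normal setting of the corollary. Combining the two identities yields
\[
T_\xi(f^{\ast^G}) = T_\xi(f)^\natural = T_\xi(f)^{\ast^G},
\]
which is precisely the claim.

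There is essentially no obstacle here: the corollary is a one-line specialization of Theorem \ref{N.normal.thm.cinv} once one observes the collapse $\sigma_N \equiv 1$ for compact $N$. The only mild point worth flagging explicitly in the write-up is that the symbol $\psi^{\ast^G}$ on the right-hand side makes sense because $T_\xi(f) \in \mathcal{C}_\xi(G,N) \subseteq \mathcal{C}_c(G)$, so we are genuinely applying the group involution of $G$ to a function in $L^1(G)$, rather than merely the quotient-group involution.
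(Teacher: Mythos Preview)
Your argument is correct and is precisely the intended one: the paper states the corollary without proof immediately after Theorem \ref{N.normal.thm.cinv} and the remark that $\sigma_N\equiv 1$ (hence $\psi^\natural=\psi^{\ast^G}$) when $N$ is compact, so your deduction $T_\xi(f^{\ast^G})=T_\xi(f)^\natural=T_\xi(f)^{\ast^G}$ is exactly what the reader is expected to supply.
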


We then have the following observations concerning covariant involutions. 

\begin{proposition}\label{main.prop.cinv}
{\it Let $G$ be a locally compact group, $N$ be a closed normal subgroup of $G$, and $\xi\in\Gamma(G,N)$. Suppose $\psi,\phi\in \mathcal{C}_\xi(G,N)$ are given. Then  
\begin{enumerate}
\item $\|\psi^\natural\|_{(1)}=\|\psi\|_{(1)}$.
\item $\psi^{{\natural}^{\natural}}=\psi$.
\item $(\psi\natural\phi)^\natural=\phi^\natural\natural\psi^\natural$.
\end{enumerate}
}\end{proposition}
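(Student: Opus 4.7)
The three claims are essentially the classical identities for the involution on a Banach $*$-algebra $L^1(H)$ for a locally compact group $H$, transplanted to the covariant setting. Since the definition $\psi^\natural(x)=\Delta_{G/N}(x^{-1}N)\overline{\psi(x^{-1})}$ mimics on $G/N$-data the usual involution of $L^1(G/N)$, and the covariant convolution $\natural$ is defined by an integral over $G/N$ against the left-invariant measure $\lambda_{G/N}$, the plan is to mirror the standard $L^1$-group proofs, keeping careful track of the modular factor $\Delta_{G/N}$.

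For part (1), the key remark is that $|\psi(xs)|=|\xi(s)\psi(x)|=|\psi(x)|$ for $x\in G$, $s\in N$, so $|\psi|$ descends to a genuine function on $G/N$. Then
\[
\|\psi^\natural\|_{(1)} = \int_{G/N}\Delta_{G/N}(x^{-1}N)\,|\psi(x^{-1})|\,\dd\lambda_{G/N}(xN),
\]
and I apply the standard inversion identity on the locally compact group $G/N$, namely $\int_{G/N} h(z^{-1})\Delta_{G/N}(z^{-1})\dd\lambda_{G/N}(z)=\int_{G/N}h(z)\dd\lambda_{G/N}(z)$, to $h=|\psi|$. This yields $\|\psi^\natural\|_{(1)}=\|\psi\|_{(1)}$.

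For part (2), I compute directly:
\begin{align*}
\psi^{\natural^\natural}(x)&=\Delta_{G/N}(x^{-1}N)\overline{\psi^\natural(x^{-1})}=\Delta_{G/N}(x^{-1}N)\overline{\Delta_{G/N}(xN)\overline{\psi(x)}}\\
&=\Delta_{G/N}(x^{-1}N)\Delta_{G/N}(xN)\,\psi(x)=\psi(x),
\end{align*}
because $\Delta_{G/N}$ is a group homomorphism into $(0,\infty)$.

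Part (3) is the main computation, but the cleanest route is to massage both sides to the same integral form. Starting with the definitions and expanding $\psi^\natural,\phi^\natural$ inside $\phi^\natural\natural\psi^\natural(x)$, the two occurrences of $\Delta_{G/N}(y^{\pm1}N)$ cancel and I obtain
\[
\phi^\natural\natural\psi^\natural(x)=\Delta_{G/N}(x^{-1}N)\int_{G/N}\overline{\phi(y^{-1})}\,\overline{\psi(x^{-1}y)}\,\dd\lambda_{G/N}(yN).
\]
On the other side, starting with $(\psi\natural\phi)^\natural(x)=\Delta_{G/N}(x^{-1}N)\int_{G/N}\overline{\psi(y)}\,\overline{\phi(y^{-1}x^{-1})}\,\dd\lambda_{G/N}(yN)$, the left-invariance of $\lambda_{G/N}$ under $yN\mapsto x^{-1}yN$ converts the integrand into $\overline{\psi(x^{-1}y)}\,\overline{\phi(y^{-1})}$, giving precisely the same expression.

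The main subtlety I expect is purely bookkeeping: keeping the modular factors consistent and justifying the substitution $y\mapsto x^{-1}y$ (which uses only left invariance of $\lambda_{G/N}$, hence requires no extra $\Delta$-factor). The integrands are in $\mathcal{C}_c(G/N)$ by Proposition~\ref{main.wd} and compactness of the relevant supports, so no integrability issues arise.
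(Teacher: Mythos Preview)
Your proof is correct. Part (1) matches the paper's approach exactly: both compute directly via the inversion identity $\int_{G/N}h(z^{-1})\Delta_{G/N}(z^{-1})\,\dd\lambda_{G/N}(z)=\int_{G/N}h(z)\,\dd\lambda_{G/N}(z)$ applied to $h=|\psi|$.

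For parts (2) and (3), however, you take a genuinely different route. The paper does not compute directly; instead it lifts through the surjection $T_\xi:\mathcal{C}_c(G)\to\mathcal{C}_\xi(G,N)$ and invokes the already-established intertwining identities $T_\xi(f^{\ast^G})=T_\xi(f)^\natural$ (Theorem~\ref{N.normal.thm.cinv}) and $T_\xi(f\ast_Gg)=T_\xi(f)\natural T_\xi(g)$ (Theorem~\ref{N.normal.thm}). Thus (2) becomes $T_\xi(f)^{\natural\natural}=T_\xi((f^{\ast^G})^{\ast^G})=T_\xi(f)$, and (3) reduces to the known anti-multiplicativity of the involution on $\mathcal{C}_c(G)$. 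Your direct computations for (2) and (3) are cleaner and entirely self-contained, relying only on the homomorphism property of $\Delta_{G/N}$ and left invariance of $\lambda_{G/N}$; in particular they avoid any appeal to surjectivity of $T_\xi$. The paper's approach, by contrast, reinforces the structural theme that all covariant algebraic identities are shadows of the classical $L^1(G)$ identities via $T_\xi$, which is thematically consistent but logically less economical for this particular proposition.
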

\begin{proof}
(1) We have 
\begin{align*}
\|\psi^\natural\|_{(1)}&=\int_{G/N}|\psi^\natural(x)|\dd\lambda_{G/N}(xN)=\int_{G/N}\Delta_{G/N}(x^{-1}N)|\psi(x^{-1})|\dd\lambda_{G/N}(xN)
\\&=\int_{G/N}\Delta_{G/N}(xN)|\psi(x)|\dd\lambda_{G/N}(x^{-1}N)=\int_{G/N}|\psi(x)|\dd\lambda_{G/N}(xN)=\|\psi\|_{(1)}.
\end{align*} 
(2) It follows from Theorem \ref{N.normal.thm.cinv}.\\
(3) Let $f,g\in\mathcal{C}_c(G)$ with $T_\xi(f)=\psi$ and $T_\xi(g)=\phi$. Using Theorem \ref{N.normal.thm} and Theorem \ref{N.normal.thm.cinv}, we get 
\begin{align*}
(\psi\natural\phi)^\natural
&=T_\xi(f\ast_G g)^\natural
=T_\xi((f\ast_G g)^{*^G})
\\&=T_\xi(g^{*^G}\ast_Gf^{\ast^G})
=T_\xi(g^{*^G})\natural T_\xi(f^{\ast^G})
=T_\xi(g)^\natural\natural T_\xi(f)^\natural=\phi^\natural\natural\psi^\natural.
\end{align*}
\end{proof}

Therefore, the mapping $^{\natural}:(\mathcal{C}_\xi(G,N),\|\cdot\|_{(1)})\to(\mathcal{C}_\xi(G,N),\|\cdot\|_{(1)})$ given by $\psi\mapsto\psi^\natural$ is an isometric involution and hence $(\mathcal{C}_\xi(G,N),\natural,^{\natural},\|\cdot\|_{(1)})$ is a normed $*$-algebra.

We then extend the involution map (\ref{ci.main.normal}) into $L^1_\xi(G,N)$.

\begin{theorem}\label{ci.L1}
Let $G$ be a locally compact group, $N$ be a closed normal subgroup of $G$, and $\xi\in\Gamma(G,N)$.  The covariant involution  
$^\natural:\mathcal{C}_\xi(G,N)\to \mathcal{C}_\xi(G,N)$ has a unique extension to an isometric involution $^\natural:L^1_\xi(G,N)\to L^1_\xi(G,N)$, 
in which the covariant Banach algebra $L^1_\xi(G,N)$ equipped with the extended involution is a Banach $*$-algebra.
\end{theorem}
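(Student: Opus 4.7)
The plan is to extend $^\natural$ from the dense subspace $\mathcal{C}_\xi(G,N)$ to all of $L^1_\xi(G,N)$ by continuity, using the isometric bound already established in Proposition \ref{main.prop.cinv}(1), and then transfer the algebraic identities by a routine approximation argument.

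First I would invoke Proposition \ref{main.prop.cinv}(1) together with the fact that $\mathcal{C}_\xi(G,N)$ sits as a norm-dense subspace in $L^1_\xi(G,N)$ by definition. The map $\psi\mapsto\psi^\natural$ is conjugate-linear and satisfies $\|\psi^\natural\|_{(1)}=\|\psi\|_{(1)}$ on the dense subspace, hence it is uniformly continuous and admits a unique conjugate-linear extension $^\natural:L^1_\xi(G,N)\to L^1_\xi(G,N)$ that preserves the norm. Concretely, for $\psi\in L^1_\xi(G,N)$ take any sequence $(\psi_n)\subset \mathcal{C}_\xi(G,N)$ with $\psi_n\to\psi$ in $\|\cdot\|_{(1)}$, set $\psi^\natural:=\lim_n\psi_n^\natural$, and check independence of the approximating sequence using the isometry bound $\|\psi_n^\natural-\psi_m^\natural\|_{(1)}=\|\psi_n-\psi_m\|_{(1)}$. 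The extended map is automatically an isometry on $L^1_\xi(G,N)$.

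Next I would verify the two algebraic properties: $\psi^{\natural\natural}=\psi$ and $(\psi\natural\phi)^\natural=\phi^\natural\natural\psi^\natural$, for all $\psi,\phi\in L^1_\xi(G,N)$. For the involutive identity, approximate $\psi$ by $(\psi_n)\subset\mathcal{C}_\xi(G,N)$, apply Proposition \ref{main.prop.cinv}(2) to each $\psi_n$, and pass to the limit using the isometry property of $^\natural$. For the anti-multiplicative identity, choose $(\psi_n),(\phi_n)\subset\mathcal{C}_\xi(G,N)$ converging to $\psi,\phi$ in $L^1_\xi(G,N)$. By Theorem \ref{cc.p}(1), $\psi_n\natural\phi_n\to\psi\natural\phi$ in $L^1_\xi(G,N)$, and then by continuity of $^\natural$ we get $(\psi_n\natural\phi_n)^\natural\to(\psi\natural\phi)^\natural$; on the other hand, Proposition \ref{main.prop.cinv}(3) gives $(\psi_n\natural\phi_n)^\natural=\phi_n^\natural\natural\psi_n^\natural$, which converges to $\phi^\natural\natural\psi^\natural$ by joint continuity of $\natural$ established in Theorem \ref{cc.p}(1) combined with the isometry of $^\natural$.

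Combining these three facts (isometric, involutive, conjugate-linear anti-homomorphism) with the Banach algebra structure already given by Theorem \ref{cc.p}(1), we conclude that $(L^1_\xi(G,N),\natural,^\natural,\|\cdot\|_{(1)})$ is a Banach $*$-algebra. Uniqueness of the extension is automatic from density of $\mathcal{C}_\xi(G,N)$ and continuity. I do not anticipate any genuine obstacle: the only minor point to handle carefully is keeping track of conjugate-linearity (rather than linearity) during the extension, but this is purely notational since the estimate that drives the extension is the isometry $\|\psi^\natural-\phi^\natural\|_{(1)}=\|\psi-\phi\|_{(1)}$, which does not distinguish the two cases.
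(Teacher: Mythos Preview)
Your proposal is correct and follows essentially the same approach as the paper: extend $^\natural$ from the dense subspace $\mathcal{C}_\xi(G,N)$ to $L^1_\xi(G,N)$ via approximating sequences using the isometry from Proposition \ref{main.prop.cinv}(1), then pass the identities $\psi^{\natural\natural}=\psi$ and $(\psi\natural\phi)^\natural=\phi^\natural\natural\psi^\natural$ to the limit using Proposition \ref{main.prop.cinv}(2),(3) together with the continuity of $\natural$ from Theorem \ref{cc.p}(1). Your write-up is in fact slightly more detailed than the paper's (you spell out the conjugate-linearity and the independence-of-sequence argument), but there is no substantive difference.
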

\begin{proof}
Let $\psi\in L^1_\xi(G,N)$. Suppose $(\psi_n)\subset \mathcal{C}_\xi(G,N)$ with $\psi=\lim_n\psi_n$ in $L^1_\xi(G,N)$. Then $(\psi_n^\natural)$ is Cauchy in $L^1_\xi(G,N)$. Since $L^1_\xi(G,N)$ is a Banach space, $(\psi_n^\natural)$ has a limit in $L^1_\xi(G,N)$. We then define 
$\psi^\natural:=\lim_n\psi_n^\natural$. Then $\psi^\natural$ is independent of the choice of the sequences $(\psi_n)$. Also, we have $\psi^\natural\in L^1_\xi(G,N)$, $\psi^{{\natural}^{\natural}}=\psi$, and $\|\psi^\natural\|_{(1)}=\|\psi\|_{(1)}$. Further, let $\phi\in L^1_\xi(G,N)$ and $(\phi_n)\subset \mathcal{C}_\xi(G,N)$ with $\phi=\lim_n\phi_n$ in $L^1_\xi(G,N)$. Then, using structure of covariant convolution on $L^1_\xi(G,N)$ and Proposition \ref{main.prop.cinv}(3), we get 
\[
(\psi\natural\phi)^\natural=\lim_n(\psi_n\natural\phi_n)^\natural=\lim_n\phi_n^\natural\natural\psi_n^\natural=\phi^\natural\natural\psi^\natural.
\]
Therefore, the covariant Banach algebra $L^1_\xi(G,N)$ equipped with the extended covariant involution $^\natural:L^1_\xi(G,N)\to L^1_\xi(G,N)$ is a Banach $*$-algebra.
\end{proof}

\begin{proposition}
{\it Let $G$ be a locally compact group, $N$ be a closed normal subgroup of $G$, and $\xi\in\Gamma(G,N)$. Then 
\begin{enumerate}
\item $T_\xi:L^1(G)\to L^1_\xi(G,N)$ 
is a contraction $*$-homomorphism of Banach $*$-algebras.
\item $\mathcal{N}_\xi^1(G,N)$ is a closed $*$-ideal of $L^1(G)$.
\item The Banach $*$-algebra $L_\xi^1(G,N)$ is isometrically isomorphic to the quotient Banach $*$-algebra $L^1(G)/\mathcal{N}_\xi^1(G,N)$.
\end{enumerate}
}\end{proposition}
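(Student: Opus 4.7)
The plan is to assemble the three parts from ingredients already recorded in the excerpt: the contraction and algebra-homomorphism properties of $T_\xi$ from Proposition \ref{Jxi.hom.L1}, the $*$-compatibility on continuous compactly supported functions from Theorem \ref{N.normal.thm.cinv}, the extended covariant involution on $L^1_\xi(G,N)$ from Theorem \ref{ci.L1}, and the Banach-space isometric isomorphism $L^1_\xi(G,N)\cong L^1(G)/\mathcal{N}^1_\xi(G,N)$ already recalled from Theorem~5.5 of \cite{AGHF.ch.normal.main}. The only genuinely new input is to promote the $*$-identity from the dense subspace $\mathcal{C}_c(G)$ to all of $L^1(G)$.

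For part (1), Proposition \ref{Jxi.hom.L1} already gives that $T_\xi$ is a contractive algebra homomorphism, so the only thing to verify is $T_\xi(f^{\ast^G})=T_\xi(f)^\natural$ for every $f\in L^1(G)$. I would pick a sequence $(f_n)\subset\mathcal{C}_c(G)$ with $f_n\to f$ in $L^1(G)$. Continuity of the $L^1(G)$-involution gives $f_n^{\ast^G}\to f^{\ast^G}$ in $L^1(G)$, and continuity of $T_\xi$ then yields $T_\xi(f_n^{\ast^G})\to T_\xi(f^{\ast^G})$ in $L^1_\xi(G,N)$. On the other side, $T_\xi(f_n)\to T_\xi(f)$ by continuity of $T_\xi$, and the fact that $^\natural$ is isometric on $L^1_\xi(G,N)$ (Theorem \ref{ci.L1}) gives $T_\xi(f_n)^\natural\to T_\xi(f)^\natural$. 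Theorem \ref{N.normal.thm.cinv} supplies $T_\xi(f_n^{\ast^G})=T_\xi(f_n)^\natural$ termwise, and passing to the limit produces the identity on all of $L^1(G)$. This density step is the only real work in the whole proposition.

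For part (2), the closed-ideal property is already in Proposition \ref{Jxi.hom.L1}(2); I would upgrade it to a $*$-ideal by invoking part (1): if $f\in\mathcal{N}^1_\xi(G,N)$, then $T_\xi(f^{\ast^G})=T_\xi(f)^\natural=0$, so $f^{\ast^G}\in\mathcal{N}^1_\xi(G,N)$. For part (3), Theorem~5.5 of \cite{AGHF.ch.normal.main} provides an isometric Banach-space isomorphism via the map $f+\mathcal{N}^1_\xi(G,N)\mapsto T_\xi(f)$; since $T_\xi$ is by part (1) a contractive $*$-homomorphism whose kernel is precisely $\mathcal{N}^1_\xi(G,N)$, the induced quotient map automatically preserves the quotient convolution and involution, so the same isomorphism is automatically one of Banach $*$-algebras. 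I expect no further obstacle: once the density argument in part (1) is executed, parts (2) and (3) are immediate formal consequences.
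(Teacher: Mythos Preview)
Your proposal is correct and follows essentially the same approach as the paper: the paper proves (1) by exactly the density argument you describe (approximate $f$ by $f_n\in\mathcal{C}_c(G)$, use continuity of the $L^1$-involution, continuity of $T_\xi$, and Theorem~\ref{N.normal.thm.cinv} termwise), then simply records that (2) and (3) follow from (1). Your expansion of (2) and (3) is precisely the standard unpacking of that remark.
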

\begin{proof}
(1) Suppose $f\in L^1(G)$ and $(f_n)\subset\mathcal{C}_c(G)$ with $f=\lim_n f_n$ in $L^1(G)$. Then $f^{\ast^G}=\lim_n f_n^{\ast^G}$ in $L^1(G)$. 
Using continuity of the linear map $T_\xi:L^1(G)\to L^1_\xi(G,N)$ and Theorem \ref{N.normal.thm.cinv}, we achieve 
\begin{align*}
T_\xi(f^{\ast^G})&=T_\xi(\lim_n f_n^{\ast^G})
=\lim_n T_\xi(f_n^{\ast^G})=\lim_n T_\xi(f_n)^\natural=T_\xi(f)^\natural.
\end{align*}
Then Proposition \ref{Jxi.hom.L1}(1) guarantees that $T_\xi:L^1(G)\to L^1_\xi(G,N)$ is a contraction $*$-homomorphism of Banach $*$-algebras.\\
(2-3) follow from (1).
\end{proof}

\begin{corollary}
{\it Let $G$ be a locally compact group, $N$ be a compact normal subgroup of $G$, and $\xi\in\Gamma(G,N)$. Then, $(L^1_\xi(G,N),\ast_G,^{\ast^G},\|\cdot\|_{L^1(G)})$ is a Banach $*$-algebra. 
}\end{corollary}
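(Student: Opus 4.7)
The plan is to deduce this corollary from two translation identities valid in the compact case. By Proposition~\ref{cov.conv.Ncom} one has $\psi \ast_G \phi = \lambda_N(N)\,\psi\,\natural\,\phi$, and by the remark immediately preceding this corollary (using $\sigma_N=1$ for compact $N$) one has $\psi^{\ast^G} = \psi^\natural$. Combined with the Banach $*$-algebra structure of $(L^1_\xi(G,N),\natural,{}^\natural,\|\cdot\|_{(1)})$ already established in Theorems~\ref{cc.p}(1) and \ref{ci.L1}, these identities should give the result without substantial additional work.

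First, invoke (\ref{p.comp}) to record that $\|\psi\|_{(1)} = \lambda_N(N)\|\psi\|_{L^1(G)}$ on $\mathcal{C}_\xi(G,N)$; by density this extends to all $\psi\in L^1_\xi(G,N)\subseteq L^1(G)$, so the two norms are equivalent on $L^1_\xi(G,N)$, which is therefore complete with respect to $\|\cdot\|_{L^1(G)}$.

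Next, extend both translation identities from $\mathcal{C}_\xi(G,N)$ to $L^1_\xi(G,N)$ via density and continuity (both $\ast_G$ and $\natural$ are jointly continuous, and $^{\ast^G}$ and ${}^\natural$ are isometric, under either of the equivalent norms). This shows $L^1_\xi(G,N)$ is closed under $\ast_G$ and under $^{\ast^G}$, and the remaining Banach $*$-algebra axioms, namely associativity, bilinearity, submultiplicativity $\|\psi \ast_G \phi\|_{L^1(G)} \le \|\psi\|_{L^1(G)}\|\phi\|_{L^1(G)}$, the involution identities $\psi^{{\ast^G}{\ast^G}} = \psi$ and $(\psi \ast_G \phi)^{\ast^G} = \phi^{\ast^G} \ast_G \psi^{\ast^G}$, and the isometric property $\|\psi^{\ast^G}\|_{L^1(G)} = \|\psi\|_{L^1(G)}$, are all inherited directly from the ambient Banach $*$-algebra $(L^1(G),\ast_G,{}^{\ast^G},\|\cdot\|_{L^1(G)})$.

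The only subtlety worth checking is this last extension step, but it is a routine density argument rather than a genuine obstacle: everything follows once the two translation identities have been transferred from continuous covariant functions to their $L^1$-limits.
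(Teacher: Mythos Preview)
Your proposal is correct and matches the paper's intended argument: the paper leaves this corollary unproved, but its implicit reasoning is precisely what you spell out---use (\ref{p.comp}) to identify $L^1_\xi(G,N)$ as a closed subspace of $L^1(G)$, then use Proposition~\ref{cov.conv.Ncom} and the remark $\psi^{\ast^G}=\psi^\natural$ (valid since $\sigma_N=1$ for compact $N$) to see that it is closed under $\ast_G$ and ${}^{\ast^G}$, whence all Banach $*$-algebra axioms are inherited from $L^1(G)$. The only cosmetic point is that the remark establishing $\psi^{\ast^G}=\psi^\natural$ appears right after the definition of the covariant involution (\ref{ci.main.normal}), not immediately before this corollary.
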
 

We finish this section by the following results concerning compact groups. 

\begin{theorem}
Let $G$ be a compact group and $N$ be a closed normal subgroup of $G$.
Suppose $\xi\in\Gamma(G,N)$, and $1<p<\infty$. Then 
\begin{enumerate}
\item $(L^p_\xi(G,N),\ast_G,^{\ast^G},\|\cdot\|_{L^p(G)})$ is a Banach $*$-algebra. 
\item $T_\xi:L^p(G)\to L^p_\xi(G,N)$
is a contraction $*$-homomorphism of Banach $*$-algebras.
\item  $\mathcal{N}_\xi^p(G,N)$ is a closed $*$-ideal of $L^p(G)$.
\item The Banach $*$-algebra $L_\xi^p(G,N)$ is isometrically isomorphic to the quotient Banach $*$-algebra $L^p(G)/\mathcal{N}_\xi^p(G,N)$.
\end{enumerate}
\end{theorem}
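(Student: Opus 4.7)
The plan is to reduce everything to the known $L^p$-theory on $G$ by exploiting the identifications available when $N$ is compact. Since $G$ is compact, so is $N$; I will fix the probability Haar measure $\lambda_N$ on $N$, so that by (\ref{p.comp}) one has $\|\psi\|_{(p)}=\|\psi\|_{L^p(G)}$ on $\mathcal{C}_\xi(G,N)$, and $L^p_\xi(G,N)$ is isometrically identified with a closed subspace of $L^p(G)$. Under this normalization, the remarks following Proposition \ref{cov.conv.Ncom} and in the involution subsection give $\psi\natural\phi=\psi\ast_G\phi$ and $\psi^\natural=\psi^{\ast^G}$ on $\mathcal{C}_\xi(G,N)$, and these identifications will be used throughout.

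For (1), I first observe that compactness of $G$ implies $L^p(G)\subseteq L^1(G)$ with $\|f\|_{L^1(G)}\le\|f\|_{L^p(G)}$, hence $\|f\ast_G g\|_{L^p(G)}\le\|f\|_{L^p(G)}\|g\|_{L^p(G)}$, making $(L^p(G),\ast_G,{}^{\ast^G})$ a Banach $*$-algebra. The continuous inclusion $L^p_\xi(G,N)\hookrightarrow L^1_\xi(G,N)$ together with Theorem \ref{cc.p}(2) produces a well-defined $\natural$-product on $L^p_\xi(G,N)\times L^p_\xi(G,N)$ with values in $L^p_\xi(G,N)$. Since $\natural=\ast_G$ on $\mathcal{C}_\xi(G,N)\times\mathcal{C}_\xi(G,N)$, and both products are separately continuous in the respective topologies on $L^p(G)$, density forces their equality on $L^p_\xi(G,N)\times L^p_\xi(G,N)$. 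A short direct computation using $G$-invariance of $\xi$ shows that $^{\ast^G}$ preserves the covariance identity $\psi(xs)=\xi(s)\psi(x)$, and unimodularity of compact $G$ makes it an $L^p$-isometry, so it restricts to an isometric involution on $L^p_\xi(G,N)$ coinciding with $^\natural$. The $*$-algebra axioms then descend from those of $L^p(G)$.

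For (2), the contraction property of $T_\xi\colon L^p(G)\to L^p_\xi(G,N)$ is Theorem 4.1 of \cite{AGHF.ch.com}. The corollaries following Theorems \ref{N.normal.thm} and \ref{N.normal.thm.cinv} provide, on $\mathcal{C}_c(G)$, the identities $T_\xi(f\ast_G g)=T_\xi(f)\ast_G T_\xi(g)$ and $T_\xi(f^{\ast^G})=T_\xi(f)^{\ast^G}$. I will extend these to all of $L^p(G)$ by approximating $f,g\in L^p(G)$ by sequences in $\mathcal{C}_c(G)$ and invoking continuity of $T_\xi$, continuity of $\ast_G$ as a map $L^p(G)\times L^p(G)\to L^p(G)$ (available because $G$ is compact), and the fact that $^{\ast^G}$ is an $L^p$-isometry.

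Assertion (3) follows because the kernel of any continuous $*$-homomorphism of Banach $*$-algebras is a closed $*$-ideal, and (4) upgrades the isometric Banach-space isomorphism $L^p_\xi(G,N)\cong L^p(G)/\mathcal{N}_\xi^p(G,N)$ from Theorem 4.9 of \cite{AGHF.ch.com} to an isomorphism of Banach $*$-algebras using (2). The only subtle point is the careful transfer by density to ensure that $\ast_G$ and $^{\ast^G}$ really do preserve $L^p_\xi(G,N)$; once this is nailed down, each claim follows from the compact-$N$ results already established combined with the compact-$G$ feature that $\ast_G$ is $L^p\times L^p\to L^p$ continuous.
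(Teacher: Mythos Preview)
Your proposal is correct and follows essentially the same route as the paper: use compactness of $G$ (hence of $N$) to embed $L^p_\xi(G,N)$ into the Banach $*$-algebra $L^p(G)$, and then verify that the convolution and involution on $G$ restrict appropriately. The paper's own proof is considerably terser---it simply asserts that $L^p_\xi(G,N)$ is a closed $*$-ideal of $L^p(G)$ and that (2)--(4) ``follow from (1)''---whereas you supply the density and continuity arguments that make the closure under $\ast_G$ and ${}^{\ast^G}$ explicit, and you spell out why (2) holds via the compact-$N$ corollaries following Theorems~\ref{N.normal.thm} and~\ref{N.normal.thm.cinv}.
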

\begin{proof}
(1) Suppose that $G$ is a compact group and $N$ is a closed normal subgroup of $G$. Then $N$ is compact. So using (\ref{p.comp}), we obtain $L^p_\xi(G,N)\subseteq L^p(G)$. Since $G$ is compact, $(L^p(G),\ast_G,^{\ast^G})$ is a Banach $*$-algebra. Then $L^p_\xi(G,N)$ is a closed $*$-ideal of $L^p(G)$. Therefore, we conclude that $(L^p_\xi(G,N),\ast_G,^{\ast^G},\|\cdot\|_{L^p(G)})$ is a Banach $*$-algebra.\\
(2-4) follow from (1).   
\end{proof}

\begin{remark}
The covariant function $*$-algebra $(L^1_\xi(G,N),\natural,^\natural)$
gives a universal isometrically isomorphic characterization for twisted function $*$-algebras on $G/N$ associated to certain class of multipliers,  if there is a Borel cross-section for $G/N$ in $G$. 
Let $\mathrm{s}:G/N\to G$ be a Borel cross-section for $G/N$ in $G$.  Suppose that $\mathrm{r}:G\to N$ is given by $\mathrm{r}(x):=\mathrm{s}(xN)^{-1}x$, for every $x\in G$.  Assume that the multiplier $\beta:G/N\times G/N\to\mathbb{T}$ is given by 
\[
\beta(xN,yN):=\overline{\xi(\mathrm{s}(xyN)^{-1}\mathrm{s}(xN)\mathrm{s}(yN))},
\]
for every $xN,yN\in G/N$.  Then Remarks \ref{tws.cnv} and \ref{tws.inv}, implies that the covariant function $*$-algebra $(L^1_\xi(G,N),\natural,^\natural)$ is isometrically isomorphic to the twisted function $*$-algebra $(L^1(G/N),\ast_\beta,^{\ast^\beta})$, see \cite{MackActa1958, LepMAn1965,LepInv1-1967, LepInv2-1967,LepInv1968,BuSmTrans1970} for more details on structure of twisted function $*$-algebras.    
\end{remark}

\section{\bf Covariant Function Algebras on Semi-direct Product Groups}

Throughout this section, we investigate the theory of covariant convolutions/involutions for characters of canonical normal subgroups in  
semi-direct product of locally compact groups. 
We here assume that $H,K$ are locally compact groups and $\theta:H\to Aut(K)$ is a continuous homomorphism. Suppose $G_\theta=H\ltimes_\theta K$ is the semi-direct product of $H$ and $K$ with respect to $\theta$. 
The semi-direct product $G_\theta=H\ltimes_\theta K$ is the locally compact topological group with the underlying set $H\times K$
which is equipped by the product topology and the group operation 
\begin{equation*}
(h,k)\ltimes_\theta(h',k')=(hh',k\theta_h(k'))\hspace{0.5cm}{\rm and}\hspace{0.5cm}(h,k)^{-1}=(h^{-1},\theta_{h^{-1}}(k^{-1})).
\end{equation*}

Let $N$ be a closed normal subgroup of $K$ such that $\theta_h(N)=N$, for every $h\in H$. In this case, $N$ is called a $\theta$-invariant subgroup of $K$. For each $h\in H$, assume that $\delta_{H,N}(h)\in(0,\infty)$ is given by 
\begin{equation}\label{delta.H.N}
\dd\lambda_N(\theta_h(s))=\delta_{H,N}(h)^{-1}\dd\lambda_N(s),
\end{equation}
where $\lambda_N$ is a left Haar measure on $N$.
Then the map $\delta_{H,N}:H\to(0,\infty)$ defined by $h\mapsto\delta_{H,N}(h)$ is a continuous homomorphism. 

The continuous homomorphism $\delta_{H,K}$ characterizes Haar measure and modular function on the semi-direct product $G_\theta:=H\ltimes_\theta K$.
Let $\Delta_H$ and $\Delta_K$ be modular functions of $H$ and $K$ respectively. The modular function $\Delta_{G_\theta}$ of $G_\theta$ is given by $\Delta_{G_\theta}(h,k)=\delta_{H,K}(h)\Delta_H(h)\Delta_K(k)$.
Suppose that $\lambda_H$ and $\lambda_K$ are left Haar measures on $H$ and $K$ respectively. Then 
\begin{equation}\label{lam.theta}
\dd\lambda_{G_\theta}(h,k):=\delta_{H,K}(h)\dd\lambda_H(h)\dd\lambda_K(k),
\end{equation}
is a left Haar measure on $G_\theta$, see \cite[\S15.26 and \S15.29]{HR1}.

\begin{proposition}
{\it Let $H,K$ be locally compact groups and $\theta:H\to Aut(K)$ be a continuous homomorphism. Suppose $G_\theta=H\ltimes_\theta K$, $N$ is a closed normal $\theta$-invariant subgroup of $K$, and $(h,k)\in G_\theta$. Then 
\begin{enumerate}
\item $\sigma_N(h)=\delta_{H,N}(h)$.
\item If $k\in N$ then $\sigma_N(h,k)=\delta_{H,N}(h)\Delta_N(k)$.
\end{enumerate}
}\end{proposition}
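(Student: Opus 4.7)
The plan is to compute $\sigma_N$ directly from its defining relation (\ref{tau.main}), evaluating the conjugation action on $N$ inside $G_\theta$ and tracking how each piece rescales the Haar measure $\lambda_N$. Throughout, I identify $N$ with the closed normal subgroup $\{e_H\}\times N$ of $G_\theta$, which is legitimate because $N$ is $\theta$-invariant in $K$ and thus normal in $G_\theta$.

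First I would carry out the key conjugation computation. Using the semi-direct product multiplication and the formula $(h,k)^{-1}=(h^{-1},\theta_{h^{-1}}(k^{-1}))$, a direct calculation gives
\[
(h,k)^{-1}(e,s)(h,k)=(e,\theta_{h^{-1}}(k^{-1}sk)),\qquad s\in N.
\]
This is the only step where one has to actually unpack the semi-direct product law, and it underlies both assertions.

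For part (1) I specialize $k=e$, reducing the conjugation to $s\mapsto\theta_{h^{-1}}(s)$. Applying the defining scaling relation (\ref{delta.H.N}) with $h$ replaced by $h^{-1}$, together with the homomorphism property of $\delta_{H,N}:H\to(0,\infty)$, yields
\[
\int_N v(s)\,\dd\lambda_N\bigl(\theta_{h^{-1}}(s)\bigr)=\delta_{H,N}(h^{-1})^{-1}\int_N v(s)\,\dd\lambda_N(s)=\delta_{H,N}(h)\int_N v(s)\,\dd\lambda_N(s),
\]
for every $v\in\mathcal{C}_c(N)$, which by uniqueness in (\ref{tau.main}) gives $\sigma_N((h,e))=\delta_{H,N}(h)$, proving (1).

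For part (2) the cleanest route is to use that $\sigma_N:G_\theta\to(0,\infty)$ is a group homomorphism. Observing that $(h,k)=(e,k)(h,e)$ (a one-line check from the semi-direct product law), multiplicativity gives $\sigma_N((h,k))=\sigma_N((e,k))\,\sigma_N((h,e))$. By the known identity $\sigma_N(t)=\Delta_N(t)$ for $t\in N$, recalled in Section 2.3, we have $\sigma_N((e,k))=\Delta_N(k)$ whenever $k\in N$, and combining with part (1) yields $\sigma_N((h,k))=\delta_{H,N}(h)\Delta_N(k)$. Alternatively one could verify (2) directly by composing the change of variable $t\mapsto k^{-1}tk$ (which contributes $\Delta_N(k)$ by left invariance and the definition of the modular function) with the $\theta_{h^{-1}}$ change (contributing $\delta_{H,N}(h)$), again using (\ref{delta.H.N}). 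The only mild obstacle is bookkeeping the direction of the inverses in $\delta_{H,N}$ and making sure that the restriction $k\in N$ is actually needed (it is, because the second factor of $\sigma_N$ arises from $\Delta_N$, which is only defined on $N$).
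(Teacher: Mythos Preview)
Your proposal is correct and follows essentially the same route as the paper: the paper computes the conjugation $(h,e_K)^{-1}(e_H,s)(h,e_K)=(e_H,\theta_{h^{-1}}(s))$ and reads off $\sigma_N(h)=\delta_{H,N}(h)$ from (\ref{tau.main}) and (\ref{delta.H.N}), then for (2) factors $(h,k)=(e_H,k)\ltimes(h,e_K)$ and uses the homomorphism property together with $\sigma_N|_N=\Delta_N$. Your only cosmetic addition is computing the general conjugation $(h,k)^{-1}(e,s)(h,k)=(e,\theta_{h^{-1}}(k^{-1}sk))$ up front (which the paper does separately in the proof of Proposition~\ref{Ginv.Semi}) and sketching an alternative direct verification of (2).
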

\begin{proof}
(1) Let $x:=(h,e_K)\in G_\theta$. Then, for each $s\in N$, we have 
\[
x^{-1}\ltimes(e_H,s)\ltimes x=(e_H,\theta_{h^{-1}}(s)).
\]
Suppose $\lambda_N$ is a fixed left Haar measure on $N$. Using (\ref{tau.main}) and (\ref{delta.H.N}), for each $v\in\mathcal{C}_c(N)$, we get 
\begin{align*}
\sigma_N(h)\int_Nv(s)\dd\lambda_N(s)
&=\int_Nv(s)\dd\lambda_N(x^{-1}\ltimes(e_H,s)\ltimes x)
\\&=\int_Nv(s)\dd\lambda_N(\theta_{h^{-1}}(s))
=\delta_{H,N}(h)\int_Nv(s)\dd\lambda_N(s),
\end{align*}
which implies that $\sigma_N(h)=\delta_{H,N}(h)$.\\
(2) Suppose $k\in N$. Using (1) and since $\sigma_N:G_\theta\to(0,\infty)$ is a group homomorphism, we obtain 
\[
\sigma_N(h,k)=\sigma_N((e_H,k)\ltimes(h,e_K))=\sigma_N(k)\sigma_N(h)=\Delta_N(k)\delta_{H,N}(h).
\] 
\end{proof}
\begin{corollary}
{\it Let $H,K$ be locally compact groups and $\theta:H\to Aut(K)$ be a continuous homomorphism. Suppose $G_\theta=H\ltimes_\theta K$ and $(h,k)\in G_\theta$. Then 
\[
\sigma_K(h,k)=\delta_{H,K}(h)\Delta_K(k).
\] 
}\end{corollary}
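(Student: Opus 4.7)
The plan is to observe that this corollary is an immediate specialisation of the preceding proposition to the case $N = K$. The subgroup $K$ is (trivially) a closed normal subgroup of itself, and it is $\theta$-invariant because by hypothesis each $\theta_h$ is an automorphism of $K$, so $\theta_h(K) = K$. Thus the hypotheses of the preceding proposition are met with the choice $N := K$.

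With that choice, the constant $\delta_{H,N}(h)$ becomes $\delta_{H,K}(h)$ and the modular function $\Delta_N$ on $N$ becomes $\Delta_K$. Now any $(h,k) \in G_\theta$ automatically satisfies $k \in N$, so part (2) of the proposition applies to every element of $G_\theta$ and yields
\[
\sigma_K(h,k) = \sigma_N(h,k) = \delta_{H,N}(h)\Delta_N(k) = \delta_{H,K}(h)\Delta_K(k),
\]
which is exactly the claim.

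There is essentially no obstacle here: the only thing to verify is the $\theta$-invariance of $K$, which is tautological from $\theta_h \in \mathrm{Aut}(K)$. Consequently, the proof reduces to one or two lines invoking the preceding proposition with $N=K$.
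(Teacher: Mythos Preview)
Your proposal is correct and matches the paper's intent: the corollary is stated in the paper without proof, as an immediate specialisation of the preceding proposition to $N=K$, exactly as you describe.
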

The continuous homomorphism $\theta:H\to Aut(K)$ induces the continuous homomorphism $\widetilde{\theta}:H\to Aut(K/N)$, given by $h\mapsto\widetilde{\theta}_h$, where $\widetilde{\theta}_h:K/N\to K/N$ is defined by \[
\widetilde{\theta}_h(kN):=\theta_h(k)N,
\] 
for every $h\in H$ and $kN\in K/N$. 

Then the quotient group $G_\theta/N$ is canonically isomorphic as a topological group to the semi-direct product $G_{\widetilde{\theta}}:=H\ltimes_{\widetilde{\theta}}(K/N)$ via the topological group isomorphism $(h,k)N\mapsto (h,kN)$.  Invoking Proposition 5.1(1) of \cite{AGHF.ch.normal.mod}, we have 
$\delta_{H,K}(h)=\delta_{H,N}(h)\delta_{H,K/N}(h)$ for every $h\in H$. Therefore, for $(h,k)\in G_\theta$, we obtain  
\begin{equation}\label{DeGN}
\Delta_{G_\theta/N}((h,k)N)=\delta_{H,K}(h)\frac{\Delta_H(h)\Delta_K(k)}{\sigma_N(h,k)}=\delta_{H,K/N}(h)\frac{\Delta_H(h)\Delta_K(k)}{\sigma_N(k)}.
\end{equation}
Assume that $\lambda_{K/N}$ is the left Haar measure on $K/N$ normalized with respect to left Haar measures $\lambda_N$ on $N$ and $\lambda_K$ on $K$. Then  
\[
\dd\lambda_{G_\theta/N}(h,kN):=\delta_{H,K/N}(h)\dd\lambda_{H}(h)\dd\lambda_{K/N}(kN),
\]
uniquely identifies the left Haar measure on $G_\theta/N$ normalized with respect to the left Haar measure $\lambda_{G_\theta}$ on $G_\theta$ given by (\ref{lam.theta}) and the left Haar measure $\lambda_N$ on $N$, see Proposition 5.1(2) of \cite{AGHF.ch.normal.mod}.

Suppose that $N$ is a closed subgroup of $K$ such that $\theta_h(N)=N$, for every $h\in H$. Assume that $\lambda_N$ is a fixed left Haar measure on $N$ and $\xi\in\chi(N)$. For $f\in\mathcal{C}_c(G_\theta)$, the function $T_\xi(f):G_\theta\to\mathbb{C}$ is given by  
\[
T_\xi(f)(a,b)=\int_Nf(a,b\theta_a(s))\overline{\xi(s)}\dd\lambda_N(s),
\]
for all $(a,b)\in G_\theta$. Then, using (\ref{delta.H.N}), we have 
\begin{equation}\label{Jxi.normal.semi}
T_\xi(f)(a,b)=\delta_{H,N}(a)\int_Nf(a,bs)\overline{\xi(\theta_{a^{-1}}(s))}\dd\lambda_N(s).
\end{equation}
In particular, if $b\in N$, we get   
\[
T_\xi(f)(a,b)=\delta_{H,N}(a)\xi(\theta_{a^{-1}}(b))\int_Nf(a,s)\overline{\xi(\theta_{a^{-1}}(s))}\dd\lambda_N(s).
\]
Let $\psi\in \mathcal{C}_\xi(G_\theta,N)$. Then $
\psi(h,k\theta_h(s))=\xi(s)\psi(h,k)$, for $h\in H$, $k\in K$, and $s\in N$. Therefore, 
\begin{equation*}
\psi(h,s)=\xi\circ\theta_{h^{-1}}(s)\psi(h,e_K)\ \ \ \ \ \ {\rm for}\ \ (h,s)\in H\times N.
\end{equation*}
In this case, for $1\le p<\infty$, the norm $\|\psi\|_{(p)}$ can be computed by  
\[
\|\psi\|_{(p)}^p:=\int_H\int_{K/N}|\psi(h,k)|^p\delta_{H,K/N}(h)\dd\lambda_{H}(h)\dd\lambda_{K/N}(kN).
\]
We then have the following characterization for invariant characters of normal subgroups of $K$. 

\begin{proposition}\label{Ginv.Semi}
{\it Let $H,K$ be locally compact groups and $\theta:H\to Aut(K)$ be a continuous homomorphism. Suppose $G_\theta=H\ltimes_\theta K$, $N$ is a closed normal $\theta$-invariant subgroup of $K$, and $\xi\in\chi(N)$. Then, $\xi\in\Gamma(G_\theta,N)$ if and only if 
$\xi(s)=\xi\circ\theta_h(k^{-1}sk)$, for $h\in H$, $k\in K$, and $s\in N$. 
}\end{proposition}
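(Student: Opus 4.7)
The plan is to reduce the statement to a direct conjugation computation inside the semi-direct product $G_\theta$. Recall that $\xi\in\Gamma(G_\theta,N)$ means that $\xi\bigl(x^{-1}\ltimes_\theta(e_H,s)\ltimes_\theta x\bigr)=\xi(s)$ for every $x\in G_\theta$ and every $s\in N$, once we identify $N$ with its canonical embedding $\{e_H\}\times N\subseteq G_\theta$. So the main task is simply to evaluate this conjugation when $x=(h,k)$ and then compare with the condition in the statement.

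First, I would carry out the semi-direct product computation. Using $(h,k)^{-1}=(h^{-1},\theta_{h^{-1}}(k^{-1}))$ together with the product rule $(a,b)\ltimes_\theta(a',b')=(aa',b\,\theta_a(b'))$, a direct calculation gives
\[
(e_H,s)\ltimes_\theta(h,k)=(h,sk),
\]
and then
\[
(h,k)^{-1}\ltimes_\theta(h,sk)=\bigl(e_H,\theta_{h^{-1}}(k^{-1}sk)\bigr).
\]
Since $N$ is normal in $K$ the element $k^{-1}sk$ lies in $N$, and by $\theta$-invariance of $N$ we have $\theta_{h^{-1}}(k^{-1}sk)\in N$, so both sides of the invariance condition are evaluations of $\xi$ at genuine elements of $N$.

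Second, combining the two observations, $\xi\in\Gamma(G_\theta,N)$ is equivalent to
\[
\xi\bigl(\theta_{h^{-1}}(k^{-1}sk)\bigr)=\xi(s)\qquad (h\in H,\,k\in K,\,s\in N).
\]
Since $h$ ranges over the whole group $H$, replacing $h^{-1}$ by $h$ is an innocuous re-parametrisation and yields exactly the condition $\xi(s)=\xi\circ\theta_h(k^{-1}sk)$ claimed in the statement. Conversely, assuming the latter identity and reversing this substitution reproduces the $G_\theta$-invariance of $\xi$.

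The only step requiring genuine care is the bookkeeping with the semi-direct product multiplication and inversion, particularly keeping the automorphism $\theta_{h^{-1}}$ in the correct position; once that computation is performed, the remainder of the argument is a straightforward unwinding of definitions, and I do not anticipate any real obstacle.
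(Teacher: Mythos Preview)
Your proposal is correct and follows essentially the same route as the paper: both reduce the $G_\theta$-invariance condition to the explicit conjugation computation $(h,k)^{-1}\ltimes_\theta(e_H,s)\ltimes_\theta(h,k)=(e_H,\theta_{h^{-1}}(k^{-1}sk))$ and then read off the equivalence. Your version is slightly more explicit in noting why $\theta_{h^{-1}}(k^{-1}sk)\in N$ and in spelling out the $h\mapsto h^{-1}$ re-parametrisation, but the argument is otherwise identical.
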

\begin{proof}
Suppose $h\in H$, $k\in K$, and $s\in N$. Let $x:=(h,k)$. We then have
\begin{align*}
x^{-1}\ltimes(e_H,s)\ltimes x
&=(h,k)^{-1}\ltimes(e_H,s)\ltimes(h,k)
=(h^{-1},\theta_{h^{-1}}(k^{-1}))\ltimes(e_H,s)\ltimes(h,k)
\\&=(h^{-1},\theta_{h^{-1}}(k^{-1}s))\ltimes(h,k)
=(h^{-1}h,\theta_{h^{-1}}(k^{-1}sk))=(e_H,\theta_{h^{-1}}(k^{-1}sk)).
\end{align*}  
Therefore, we get 
\[
\xi_x(s)=\xi(x^{-1}\ltimes(e_H,s)\ltimes x)=\xi(\theta_{h^{-1}}(k^{-1}sk)),
\]
implying $\xi\in\Gamma(G_\theta,N)$ if and only if 
$\xi(s)=\xi\circ\theta_h(k^{-1}sk)$, for $h\in H$, $k\in K$, and $s\in N$. 
\end{proof}

\begin{corollary}\label{Ginv.Semi.Kabelian}
{\it Let $H,K$ be locally compact groups and $\theta:H\to Aut(K)$ be a continuous homomorphism. Suppose $G_\theta=H\ltimes_\theta K$, $N$ is a central closed $\theta$-invariant subgroup of $K$, and $\xi\in\chi(N)$. Then, $\xi\in\Gamma(G_\theta,N)$ if and only if $\xi=\xi\circ\theta_h$, for every $h\in H$. In particular, if $K$ is Abelian then $\xi\in\Gamma(G_\theta,N)$ if and only if $\xi=\xi\circ\theta_h$, for every $h\in H$. 
}\end{corollary}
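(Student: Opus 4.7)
The plan is to derive this corollary directly from Proposition \ref{Ginv.Semi}, using the centrality hypothesis on $N$ to collapse the characterizing condition given there. Since the statement is a pure specialization, the work is essentially algebraic unpacking, not new estimation.

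First, I would invoke Proposition \ref{Ginv.Semi} to rephrase membership $\xi\in\Gamma(G_\theta,N)$ as the requirement
\[
\xi(s)=\xi\circ\theta_h(k^{-1}sk)\quad\text{for all}\ h\in H,\ k\in K,\ s\in N.
\]
Next, I would exploit the assumption that $N$ is central in $K$: this means $k^{-1}sk=s$ for every $k\in K$ and $s\in N$. Substituting into the displayed condition reduces it to $\xi(s)=\xi\circ\theta_h(s)$ for all $h\in H$ and $s\in N$, which is exactly $\xi=\xi\circ\theta_h$ for every $h\in H$. The reverse implication follows by reversing the same chain of equalities, so the biconditional is established.

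For the "in particular" clause, when $K$ is Abelian every subgroup of $K$ is automatically central in $K$ (and normal). Hence any closed $\theta$-invariant subgroup $N$ of an Abelian $K$ falls under the hypotheses of the first half of the corollary, and the claimed characterization transfers verbatim.

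I do not anticipate a real obstacle: the content of the corollary is just that centrality trivializes the inner-conjugation factor $k^{-1}sk$ appearing in Proposition \ref{Ginv.Semi}, leaving only the $\theta$-twist. The only point worth being careful about is remembering that the two conditions in Proposition \ref{Ginv.Semi} must be quantified over all $h,k,s$ independently, so that setting $k=e_K$ is legitimate in deducing the reverse implication after centrality has been applied.
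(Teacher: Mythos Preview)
Your proposal is correct and is exactly the intended argument: the paper states this corollary without proof immediately after Proposition \ref{Ginv.Semi}, and your derivation---using centrality of $N$ in $K$ to collapse $k^{-1}sk$ to $s$, then reading off the equivalence---is the natural (and only reasonable) way to fill it in. The final parenthetical remark about ``setting $k=e_K$'' is unnecessary, since once centrality gives $k^{-1}sk=s$ the variable $k$ simply drops out of both directions; but this does not affect the validity of what you wrote.
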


Let $N$ be a closed $\theta$-invariant normal subgroup of $K$, and $\xi\in\Gamma(G_\theta,N)$. Suppose that $f\in\mathcal{C}_c(G_\theta)$ and $(a,b)\in G_\theta$ are given. Then, using (\ref{Jxi.normal.semi}), we obtain  
\begin{equation}\label{Jxi.N.Main}
T_\xi(f)(a,b)=\delta_{H,N}(a)\int_Nf(a,bs)\overline{\xi(s)}\dd\lambda_N(s).
\end{equation}
In particular, if $b\in N$, we get 
\[
T_\xi(f)(a,b)=\delta_{H,N}(a)\xi(b)\int_Nf(a,s)\overline{\xi(s)}\dd\lambda_N(s).
\]
Assume that $\psi,\phi\in \mathcal{C}_\xi(G_\theta,N)$. Then  
\[
\psi(h,ks)=\xi(s)\psi(h,k),
\]
for every $h\in H$, $k\in K$, $s\in N$. The covariant convolution  
$\psi\natural\phi:G_\theta\to\mathbb{C}$ is given by  
\begin{equation}\label{cc.main.semi}
\psi\natural\phi(a,b)=\int_H\int_{K/N}\psi(h,k)\phi(h^{-1}a,\theta_{h^{-1}}(k^{-1}b))\delta_{H,K/N}(h)\dd\lambda_{H}(h)\dd\lambda_{K/N}(kN),
\end{equation}
and the covariant involution $\psi^\natural:G_\theta\to\mathbb{C}$ is given by 
\begin{equation}\label{ci.main.semi}
\psi^\natural(a,b):=\Delta_{G_\theta/N}(a^{-1},\theta_{a^{-1}}(b^{-1})N)\overline{\psi(a^{-1},\theta_{a^{-1}}(b^{-1}))},
\end{equation}
for every $(a,b)\in G_\theta$. 

\subsection{\bf The Case $N=K$}
Let $H,K$ be locally compact groups and $\theta:H\to Aut(K)$ be a continuous homomorphism. Then $\widetilde{K}=\{(e_H,k):k\in K\}$ is a closed normal subgroup of $G_\theta=H\ltimes_\theta K$. 
Suppose that $\xi\in\chi(K)$ and $\lambda_K$ is a left Haar measure on $K$.
Assume that $f\in\mathcal{C}_c(G_\theta)$ and $(a,b)\in G_\theta$. Then, using (\ref{Jxi.normal.semi}), we get 
\[
T_\xi(f)(a,b)=\delta_{H,K}(a)\xi(\theta_{a^{-1}}(b))\int_Kf(a,s)\overline{\xi(\theta_{a^{-1}}(s))}\dd\lambda_K(s).
\] 

Next we present a criterion for $G_\theta$-invariant characters of $K$.

\begin{proposition}\label{Ginv.semi.K}
{\it Let $H,K$ be locally compact groups and $\theta:H\to Aut(K)$ be a continuous homomorphism. Suppose $G_\theta=H\ltimes_\theta K$ and $\xi\in\chi(K)$. Then, $\xi\in\Gamma(G_\theta,K)$ 
if and only if $\xi=\xi\circ\theta_h$ for every $h\in H$.
}\end{proposition}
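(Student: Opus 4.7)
The plan is to deduce Proposition~\ref{Ginv.semi.K} directly from the more general Proposition~\ref{Ginv.Semi} by specializing to the case $N = K$ and exploiting the fact that $\xi$ takes values in the abelian group $\mathbb{T}$.

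First I would invoke Proposition~\ref{Ginv.Semi} with $N := K$, which tells us that $\xi \in \Gamma(G_\theta, K)$ if and only if
\[
\xi(s) \;=\; \xi \circ \theta_h(k^{-1} s k) \qquad \text{for all } h \in H,\ k \in K,\ s \in K.
\]
For the \emph{necessity} half, I would simply specialize this identity to $k = e_K$, which collapses $\theta_h(k^{-1} s k)$ to $\theta_h(s)$ and yields $\xi(s) = \xi \circ \theta_h(s)$ for every $h \in H$ and every $s \in K$, i.e.\ $\xi = \xi \circ \theta_h$.

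For the \emph{sufficiency} half, suppose $\xi = \xi \circ \theta_h$ for every $h \in H$. For arbitrary $h \in H$, $k \in K$, $s \in K$, I would compute
\[
\xi \circ \theta_h(k^{-1} s k) \;=\; \xi(k^{-1} s k) \;=\; \xi(k^{-1})\,\xi(s)\,\xi(k) \;=\; \xi(s),
\]
where the first equality uses the hypothesis, the second uses that $\xi$ is a group homomorphism, and the third uses that $\mathbb{T}$ is abelian (so $\xi(k^{-1})\xi(k) = 1$ commutes past $\xi(s)$). Invoking Proposition~\ref{Ginv.Semi} in the reverse direction then gives $\xi \in \Gamma(G_\theta, K)$.

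There is no real obstacle here: the proposition is essentially a direct specialization of Proposition~\ref{Ginv.Semi}, with the only subtlety being the commutativity of the target group $\mathbb{T}$ which kills the inner conjugation by $k$. The proof should be only a few lines long.
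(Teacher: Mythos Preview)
Your proof is correct and follows essentially the same approach as the paper: both directions are deduced from Proposition~\ref{Ginv.Semi} with $N=K$, the forward direction by setting $k=e_K$, and the backward direction by using that $\xi$ is a $\mathbb{T}$-valued homomorphism to cancel the inner conjugation. The only cosmetic difference is that the paper applies the homomorphism property of $\xi\circ\theta_h$ before invoking the hypothesis $\xi\circ\theta_h=\xi$, whereas you apply the hypothesis first and then split $\xi(k^{-1}sk)$; the content is identical.
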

\begin{proof}
Let $\xi\in \Gamma(G_\theta,K)$ be given. Using Proposition \ref{Ginv.Semi}, we get $\xi(s)=\xi(\theta_a(b^{-1}sb))$,  
for all $a\in H$, $b,s\in K$. If $b:=e_K$ then $\xi(s)=\xi(\theta_a(s))=\xi\circ\theta_a(s)$, for every $a\in H$ and $s\in N$. So $\xi=\xi\circ\theta_h$ for all $h\in H$. Conversely, let $\xi\circ\theta_a=\xi$ for all $a\in H$. We then have  
\begin{align*}
\xi(\theta_a(b^{-1}sb))
&=\xi\circ\theta_a(b^{-1}sb)
=\xi\circ\theta_a(b^{-1})\xi\circ\theta_a(s)\xi\circ\theta_a(b)
\\&=\overline{\xi\circ\theta_a(b)}\xi\circ\theta_a(s)\xi\circ\theta_a(b)=\xi\circ\theta_a(s)=\xi(s),
\end{align*}
for all $a\in H$, $b,s\in K$. Then, applying Proposition \ref{Ginv.Semi}, we get $\xi\in\Gamma(G_\theta,K)$.
\end{proof}

For $\xi\in\Gamma(G_\theta,K)$, $f\in\mathcal{C}_c(G_\theta)$, and $(a,b)\in G_\theta$ we obtain  
\begin{equation}\label{Jxi.K}
T_\xi(f)(a,b)=\delta_{H,K}(a)\xi(b)\int_Kf(a,s)\overline{\xi(s)}\dd\lambda_K(s).
\end{equation}
In this case, every $\psi\in \mathcal{C}_\xi(G_\theta,K)$ satisfies  
\begin{equation}\label{covproperty.K}
\phi(h,k)=\xi(k)\phi(h,e_K),
\end{equation} 
for all $h\in H$ and $k\in K$. Also, for $1\le p<\infty$, we have   
\[
\|\psi\|_{(p)}^p=\int_H|\psi(h,e_K)|^p\dd\lambda_{H}(h).
\]
We finish this section by the following explicit formulas for covariant convolutions and covariant involutions in $\mathcal{C}_\xi(G_\theta,K)$, if $\xi\in\Gamma(G_\theta, K)$.

\begin{proposition}
{\it Let $H,K$ be locally compact groups and $\theta:H\to Aut(K)$ be a continuous homomorphism. Suppose $G_\theta:=H\ltimes_\theta K$, 
$\xi\in\Gamma(G_\theta,K)$, $\psi,\phi\in \mathcal{C}_\xi(G_\theta,K)$, and $(a,b)\in G_\theta$. Then
\begin{enumerate}
\item The covariant convolution  
$\psi\natural\phi:G_\theta\to\mathbb{C}$ is given by  
\begin{equation}\label{cc.main.semi.canoK}
\psi\natural\phi(a,b)=\xi(b)\int_H\psi(h,e_K)\phi(h^{-1}a,e_K)\dd\lambda_{H}(h).
\end{equation}
\item The covariant involution $\psi^\natural:G_\theta\to\mathbb{C}$ is given by 
\begin{equation}\label{ci.main.semi.canoK}
\psi^\natural(a,b)=\xi(b)\Delta_{H}(a^{-1})\overline{\psi(a^{-1},e_K)}.
\end{equation}
\end{enumerate}
}\end{proposition}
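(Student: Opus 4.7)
The plan is to specialize the general formulas \eqref{cc.main.semi} and \eqref{ci.main.semi} to the case $N=K$, and then to simplify using the covariance identity \eqref{covproperty.K} together with the characterization $\xi\circ\theta_h=\xi$ for all $h\in H$ provided by Proposition \ref{Ginv.semi.K}. The main work is bookkeeping: I need to track how the quotient $K/N$ and its associated modular/Haar data degenerate when $N$ coincides with $K$, and then invoke the $\theta$-invariance of $\xi$ to strip the $\theta_{h^{-1}}$ from the arguments.

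For part (1), I would start from \eqref{cc.main.semi} and observe that with $N=K$ the coset space $K/N$ reduces to a single point. With the Weil normalization of $\lambda_{K/N}$ relative to $\lambda_K$ and $\lambda_N=\lambda_K$, this single point carries mass $1$, and $\delta_{H,K/N}\equiv 1$ since $\mathrm{Aut}(K/N)$ is trivial. Hence the double integral in \eqref{cc.main.semi} collapses to
\[
\psi\natural\phi(a,b)=\int_H\psi(h,e_K)\phi(h^{-1}a,\theta_{h^{-1}}(b))\dd\lambda_H(h).
\]
Applying \eqref{covproperty.K} to the second factor gives $\phi(h^{-1}a,\theta_{h^{-1}}(b))=\xi(\theta_{h^{-1}}(b))\phi(h^{-1}a,e_K)$, and Proposition \ref{Ginv.semi.K} then yields $\xi(\theta_{h^{-1}}(b))=\xi(b)$, producing \eqref{cc.main.semi.canoK} after pulling $\xi(b)$ out of the integral.

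For part (2), I would start from \eqref{ci.main.semi} and compute $\Delta_{G_\theta/N}(a^{-1},\theta_{a^{-1}}(b^{-1})N)$ with $N=K$ via the formula displayed before \eqref{Jxi.N.Main}, namely $\Delta_{G_\theta/N}((h,k)N)=\delta_{H,K/N}(h)\Delta_H(h)\Delta_K(k)/\sigma_N(k)$. Since $\delta_{H,K/N}\equiv 1$ and $\sigma_N(k)=\Delta_N(k)=\Delta_K(k)$ for $k\in N=K$, this simplifies to $\Delta_H(a^{-1})$. For the conjugated factor, \eqref{covproperty.K} gives $\psi(a^{-1},\theta_{a^{-1}}(b^{-1}))=\xi(\theta_{a^{-1}}(b^{-1}))\psi(a^{-1},e_K)$, which equals $\xi(b^{-1})\psi(a^{-1},e_K)=\overline{\xi(b)}\psi(a^{-1},e_K)$ by Proposition \ref{Ginv.semi.K} and the unitarity of $\xi$. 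Taking complex conjugates, $\overline{\xi(b)}$ flips back to $\xi(b)$, yielding \eqref{ci.main.semi.canoK}.

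I do not anticipate a serious obstacle; the content is purely a substitution into previously proved formulas. The only point that requires care is the normalization: one must verify that the Weil-normalized Haar measure on $K/N$ does indeed give mass $1$ to the single coset when $N=K$, and that $\delta_{H,K/N}$ is trivial, so that no extraneous constants survive in \eqref{cc.main.semi.canoK}. Once this is in place, invariance of $\xi$ under $\theta$ and the covariance relation \eqref{covproperty.K} handle the rest mechanically.
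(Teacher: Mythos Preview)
Your proposal is correct and follows essentially the same route as the paper's proof: specialize \eqref{cc.main.semi} and \eqref{ci.main.semi} to $N=K$, collapse the trivial $K/N$-integral, then apply \eqref{covproperty.K} and the $\theta$-invariance of $\xi$ from Proposition \ref{Ginv.semi.K}. Your explicit attention to the Weil normalization of $\lambda_{K/N}$ and the triviality of $\delta_{H,K/N}$ is a small bonus over the paper, which simply passes silently from the double integral to the single integral over $H$.
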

\begin{proof}
(1) Using (\ref{cc.main.semi}) and (\ref{covproperty.K}), for $N:=K$, we get 
\begin{align*}
\psi\natural\phi(a,b)&=\int_H\int_{K/N}\psi(h,k)\phi(h^{-1}a,\theta_{h^{-1}}(k^{-1}b))\delta_{H,K/N}(h)\dd\lambda_{H}(h)\dd\lambda_{K/N}(kN)
\\&=\int_H\psi(h,e_K)\phi(h^{-1}a,\theta_{h^{-1}}(b))\dd\lambda_{H}(h)
\\&=\int_H\psi(h,e_K)\phi(h^{-1}a,e_K)\xi(\theta_{h^{-1}}(b))\dd\lambda_{H}(h)
=\xi(b)\int_H\psi(h,e_K)\phi(h^{-1}a,e_K)\dd\lambda_{H}(h).
\end{align*}
(2) Using (\ref{DeGN}), (\ref{ci.main.semi}) and (\ref{covproperty.K}), we have 
\begin{align*}
\psi^\natural(a,b)&=\Delta_{G_\theta/K}(a^{-1},\theta_{a^{-1}}(b^{-1})K)\overline{\psi(a^{-1},\theta_{a^{-1}}(b^{-1}))}
=\Delta_{H}(a^{-1})\overline{\psi(a^{-1},\theta_{a^{-1}}(b^{-1}))}
\\&=\Delta_{H}(a^{-1})\overline{\xi(\theta_{a^{-1}}(b^{-1}))}\overline{\psi(a^{-1},e_K)}=\xi(b)\Delta_{H}(a^{-1})\overline{\psi(a^{-1},e_K)}.
\end{align*}
\end{proof}

\section{\bf Examples and Conclusions}
Throughout this section, we discuss the theory of covariant convolutions and involutions in the case of different examples of semi-direct product groups. 

\subsection{Abstract Weyl-Heisenberg Groups}

Let $L$ be a locally compact Abelian group with the dual group $\widehat{L}$.
Suppose $H:=L$ and $K:=\widehat{L}\times\mathbb{T}$. Let $\theta:H\to Aut(K)$ be the continuous homomorphism given by $x\mapsto\theta_x$, where $\theta_x:K\to K$ is defined by $\theta_x(\omega,z):=(\omega,\omega(x)z)$,
for every $(\omega,z)\in K=\widehat{L}\times\mathbb{T}$.
The abstract Weyl-Heisenberg group associated to the LCA group $L$, denoted by $\mathbb{W}(L)$, is the semi-direct product $H\ltimes_\theta K$. 
The abstract Weyl-Heisenberg group $\mathbb{W}(L)$ has the underlying set $L\times\widehat{L}\times\mathbb{T}$ equipped with the following group law
\[
(x,\omega,z)\ltimes(x',\omega',z')=(x+x',\omega\omega',\omega'(x)zz'),
\]
\[
(x,\omega,z)^{-1}=(-x,\overline{\omega},\omega(x)\overline{z}),
\]
for every $(x,\omega,z),(x',\omega',z')\in \mathbb{W}(L)$.

\subsubsection{\bf The Case $N=\mathbb{T}$}
The set $N:=\{(e_L,\mathbf{1},z):z\in\mathbb{T}\}$ is a closed and central subgroup of 
$\mathbb{W}(L)$. Also, we have $\theta_x(\mathbf{1},z)=(\mathbf{1},z)$ for all $x\in L$ and $z\in\mathbb{T}$. We then have $\chi(N)=\{\chi_n:n\in \mathbb{Z}\}$, where the character $\chi_n:\mathbb{T}\to\mathbb{T}$ is given by $\chi_n(z):=z^n$ for $z\in\mathbb{T}$ and $n\in\mathbb{Z}$. 

\begin{proposition}
{\it Let $L$ be a LCA group and $\mathbb{W}(L)$ be the abstract Weyl-Heisenberg group associated to $L$. Then
\[
\Gamma(\mathbb{W}(L),\mathbb{T})=\mathbb{Z}.
\]
}\end{proposition}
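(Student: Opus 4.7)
The plan is to appeal directly to Corollary \ref{Ginv.Semi.Kabelian}, since the hypotheses for that corollary are met in this setting: the subgroup $N=\{(e_L,\mathbf{1},z):z\in\mathbb{T}\}$ is a closed subgroup of $K=\widehat{L}\times\mathbb{T}$, it is central (and even contained in the center of $\mathbb{W}(L)$), $K$ is Abelian, and by the explicit formula $\theta_x(\omega,z)=(\omega,\omega(x)z)$ we have $\theta_x(N)=N$ for every $x\in L$, so $N$ is $\theta$-invariant. Hence $\xi\in\Gamma(\mathbb{W}(L),\mathbb{T})$ if and only if $\xi=\xi\circ\theta_x$ for every $x\in L=H$.

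Next I would compute the restriction of $\theta_x$ to $N$ and observe that it is the identity. Indeed, an element of $N$ has the form $(\mathbf{1},z)$ where $\mathbf{1}\in\widehat{L}$ is the trivial character of $L$, so
\[
\theta_x(\mathbf{1},z)=(\mathbf{1},\mathbf{1}(x)z)=(\mathbf{1},z),
\]
for every $x\in L$ and $z\in\mathbb{T}$. Consequently $\xi\circ\theta_x=\xi$ holds for every $\xi\in\chi(N)$ and every $x\in L$, so the invariance criterion from Corollary \ref{Ginv.Semi.Kabelian} is satisfied trivially for every character of $N$.

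Finally I would conclude by combining this with the identification $\chi(N)=\chi(\mathbb{T})=\{\chi_n:n\in\mathbb{Z}\}$ recalled just before the proposition, so that $\Gamma(\mathbb{W}(L),\mathbb{T})=\chi(N)$ can be identified with $\mathbb{Z}$ via $\chi_n\leftrightarrow n$. There is no real obstacle in this argument: the main content has already been absorbed into Corollary \ref{Ginv.Semi.Kabelian}, and the only thing to verify at this stage is the triviality of the $L$-action on the central $\mathbb{T}$-factor, which is immediate from the definition of $\theta$.
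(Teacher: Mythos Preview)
Your proof is correct and rests on the same underlying observation as the paper: the conjugation action of $\mathbb{W}(L)$ on the central $\mathbb{T}$-factor is trivial, so every character of $\mathbb{T}$ is automatically $\mathbb{W}(L)$-invariant. The paper's argument is slightly more direct in that it appeals to the general remark (stated just after the definition of $\Gamma(G,N)$) that $\Gamma(G,N)=\chi(N)$ whenever $N$ is central in $G$, rather than routing through the semi-direct product criterion of Corollary~\ref{Ginv.Semi.Kabelian}; but since that corollary itself reduces to checking $\xi=\xi\circ\theta_x$ and you immediately verify $\theta_x|_N=\mathrm{id}$, the two arguments are in substance identical.
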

\begin{proof}
Since $\mathbb{T}$ is central in $\mathbb{W}(L)$, for each $n\in\mathbb{Z}$, the character $\chi_n$ of $\mathbb{T}$ is $\mathbb{W}(L)$-invariant. Therefore, we get 
$\Gamma(\mathbb{W}(L),\mathbb{T})=\mathbb{Z}$.
\end{proof}
Suppose $n\in\mathbb{Z}$ and $\xi:=\chi_n$. Then, using (\ref{Jxi.N.Main}), for $f\in\mathcal{C}_c(\mathbb{W}(L))$ 
and $(x,\omega,z)\in\mathbb{W}(L)$, we get 
\[
T_\xi(f)(x,\omega,z)=\frac{z^n}{2\pi}\int_0^{2\pi}f(x,\omega,e^{\ii\alpha})e^{-\ii n\alpha}\dd\alpha.
\]
Therefore, each $\phi\in \mathcal{C}_\xi(\mathbb{W}(L),\mathbb{T})$ satisfies 
\begin{equation}\label{WL.main.prop.T}
\phi(x,\omega,z)=z^n\phi(x,\omega,1),
\end{equation}
for all $(x,\omega,z)\in\mathbb{W}(L)$.
Also, for each $1\le p<\infty$ and $\phi\in \mathcal{C}_\xi(\mathbb{W}(L),\mathbb{T})$, we have 
\[
\|\phi\|_{(p)}^p=\int_L\int_{\widehat{L}}|\phi(x,\omega,1)|^p\dd\lambda_{L}(x)\dd\lambda_{\widehat{L}}(\omega)=\|\phi\|_{L^p(\mathbb{W}(L))}^p.
\]
Let $\psi,\phi\in \mathcal{C}_\xi(\mathbb{W}(L),\mathbb{T})$ and $(x,\omega,z)\in\mathbb{W}(L)$. Then 
\begin{align*}
\psi\natural\phi(x,\omega,z)
&=\int_L\int_{\widehat{L}}\psi(x',\omega',1)\phi(x-x',\omega\overline{\omega'},\overline{\omega(x')}\omega'(x')z)\dd\lambda_{L}(x')\dd\lambda_{\widehat{L}}(\omega').
\end{align*}
Applying (\ref{WL.main.prop.T}), we get the following explicit characterization 
\begin{equation}\label{cc.main.semi.awhg.n=T}
\psi\natural\phi(x,\omega,z)=z^n\int_L\int_{\widehat{L}}\overline{\omega(x')}^n\omega'(x')^n\psi(x',\omega',1)\phi(x-x',\omega\overline{\omega'},1)\dd\lambda_{L}(x')\dd\lambda_{\widehat{L}}(\omega'),
\end{equation}
and the covariant involution is given by 
\begin{equation}\label{ci.main.semi.awhg.n=T}
\psi^\natural(x,\omega,z)=\omega(x)^{-n}z^n\overline{\psi(-x,\overline{\omega},1)}.
\end{equation}

\begin{example}
Let $d\ge 1$ and $L:=\mathbb{R}^d$. We then have $\widehat{\mathbb{R}^d}=\{\widehat{\bom}:\bom\in\mathbb{R}^d\}$, where the character $\widehat{\bom}:\mathbb{R}^d\to\mathbb{T}$ is given by $\widehat{\bom}(\mathbf{x}):=e^{\ii\langle\mathbf{x},\bom\rangle}$, for all $\mathbf{x}\in\mathbb{R}^d$. 
In this case, we have 
\[
\Gamma(\mathbb{R}^d\ltimes(\mathbb{R}^d\times\mathbb{T}),\mathbb{T})=\mathbb{Z}.
\]
Let $n\in\mathbb{Z}$, $\psi,\phi\in\mathcal{C}_{\chi_n}(\mathbb{W}(\mathbb{R}^d),\mathbb{T})$, and $(\mathbf{x},\bom,z)\in\mathbb{W}(\mathbb{R}^d)$. Then using (\ref{cc.main.semi.awhg.n=T}), we get 
\begin{equation*}
\psi\natural\phi(\mathbf{x},\bom,z)=z^n\int_{\mathbb{R}^d}\int_{\mathbb{R}^d}e^{\ii n\langle\bom'-\bom,\mathbf{x}'\rangle}\psi(\mathbf{x}',\bom',1)\phi(\mathbf{x}-\mathbf{x}',\bom-\bom',1)\dd\mathbf{x'}\dd\bom',
\end{equation*} 
and using (\ref{ci.main.semi.awhg.n=T}) the covariant involution is  
\[
\psi^\natural(\mathbf{x},\bom,z)=e^{-\ii n\langle\mathbf{x},\bom\rangle}z^n\overline{\psi(-\mathbf{x},-\bom,1)}.
\]
\end{example}
\begin{example}
Let $d\ge 1$ and $L:=\mathbb{Z}^d$. We then have 
$\widehat{\mathbb{Z}^d}=\{\widehat{\mathbf{z}}:\mathbf{z}=(z_1,...,z_d)^T\in\mathbb{T}^d\}$, where the character $\widehat{\mathbf{z}}:\mathbb{Z}^d\to\mathbb{T}$ is given by 
$\widehat{\mathbf{z}}(\mathbf{k}):=\prod_{j=1}^dz_j^{k_j}$, for $\mathbf{k}:=(k_1,...,k_d)^T\in\mathbb{Z}^d$ and $\mathbf{z}:=(z_1,...,z_d)^T\in\mathbb{T}^d$. In this case, we have 
\[
\Gamma(\mathbb{Z}^d\ltimes\mathbb{T}^{d+1},\mathbb{T})=\mathbb{Z}.
\]
Let $n\in\mathbb{Z}$, $\psi,\phi\in\mathcal{C}_{\chi_n}(\mathbb{Z}^d\ltimes\mathbb{T}^{d+1},\mathbb{T})$, and $(\mathbf{k},\mathbf{z},z)\in\mathbb{W}(\mathbb{Z}^d)$. Then, using (\ref{cc.main.semi.awhg.n=T}), we obtain
\[
\psi\natural\phi(\mathbf{k},\mathbf{z},z)=z^n\sum_{\mathbf{k}'\in\mathbb{Z}^d}\int_{\mathbb{T}^d}\widehat{\mathbf{z}}(\mathbf{k}')^{-n}\widehat{\mathbf{z}'}(\mathbf{k}')^n\psi(\mathbf{k}',\mathbf{z}',1)\phi(\mathbf{k}-\mathbf{k}',\mathbf{z}\overline{\mathbf{z}'},1)\dd\mathbf{z}',
\]
and using (\ref{ci.main.semi.awhg.n=T}), the covariant involution is given by 
\[
\psi^\natural(\mathbf{k},\mathbf{z},z)=\overline{\widehat{\mathbf{z}}(\mathbf{k})}^nz^n\overline{\psi(-\mathbf{k},\overline{\mathbf{z}},1)}.
\] 
\end{example}
\begin{example}
Let $M>0$ be a positive integer and $L:=\mathbb{Z}_M$ be the finite cyclic 
group of integers modulo $M$. We then have $\widehat{L}=\{\widehat{\ell}:\ell\in\mathbb{Z}_M\}$, where the character $\widehat{\ell}:\mathbb{Z}_M\to\mathbb{T}$ is given by 
$\widehat{\ell}(m):=e^{2\pi\ii m\ell/M}$, for every $m,\ell\in\mathbb{Z}_M$. In this case, we have 
\[
\Gamma(\mathbb{Z}_M\ltimes(\mathbb{Z}_M\times\mathbb{T}),\mathbb{T})=\mathbb{Z}.
\]
Let $n\in\mathbb{Z}$, $\psi,\phi\in\mathcal{C}_{\chi_n}(\mathbb{Z}_M\ltimes(\mathbb{Z}_M\times\mathbb{T}),\mathbb{T})$, and $(m,\ell,z)\in\mathbb{W}(\mathbb{Z}_M)$. Then, using (\ref{cc.main.semi.awhg.n=T}), we have 
\[
\psi\natural\phi(m,\ell,z)=z^n\sum_{m'=0}^{M-1}\sum_{\ell'=0}^{M-1}e^{2\pi\ii n(\ell'-\ell)m'/M}\phi(m',\ell',1)\varphi(m-m',\ell-\ell',1),
\]
and using (\ref{ci.main.semi.awhg.n=T}), the covariant involution is given by 
\[
\psi^\natural(m,\ell,z)=e^{-2\pi\ii nm\ell/M}z^n\overline{\psi(M-m,M-\ell,1)}.
\]
\end{example}

\subsubsection{\bf The Case $N=\widehat{L}\times\mathbb{T}$}
Let $N:=\{(e_L,\omega,z):\omega\in\widehat{L},z\in\mathbb{T}\}$.
Then, $\chi(N)=\{\chi_{y,n}:y\in L, n\in\mathbb{Z}\}$,  
where the character $\chi_{y,n}:N\to\mathbb{T}$ is given by 
$\chi_{y,n}(\omega,z):=\omega(y)z^n$, for all $(\omega,z)\in K=\widehat{L}\times\mathbb{T}$. 
\begin{proposition}\label{Gamma.WL.whL.T.L.infinite}
{\it Let $L$ be a locally compact Abelian group and $\mathbb{W}(L)$ be the abstract Weyl-Heisenberg group associated to $L$.  
\begin{enumerate}
\item If $L$ is an infinite group then $
\Gamma(\mathbb{W}(L),\widehat{L}\times\mathbb{T})=L$.
\item If $L$ is a finite group of order $M$ then  
$\Gamma(\mathbb{W}(L),\widehat{L}\times\mathbb{T})=L\times M\mathbb{Z}$.
\end{enumerate}
}\end{proposition}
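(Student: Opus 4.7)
The plan is to invoke Corollary~\ref{Ginv.Semi.Kabelian} to reduce the classification to a pointwise condition on $L$. Since $K=\widehat{L}\times\mathbb{T}$ is Abelian, $N=K$ is central in $K$, and $N$ is $\theta$-invariant because $\theta_x(\omega,z)=(\omega,\omega(x)z)$ stays in $N$. Hence the corollary applies: $\chi_{y,n}\in\Gamma(\mathbb{W}(L),N)$ if and only if $\chi_{y,n}\circ\theta_x=\chi_{y,n}$ for every $x\in L$.

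A short computation using the definition of $\theta_x$ and multiplicativity of $\chi_{y,n}$ yields
\[
\chi_{y,n}\circ\theta_x(\omega,z)=\omega(y)\omega(x)^nz^n=\omega(y+nx)z^n,
\]
so the invariance condition becomes $\omega(nx)=1$ for every $\omega\in\widehat{L}$ and $x\in L$. By Pontryagin duality the characters of $L$ separate its points, so this is equivalent to the purely group-theoretic condition that $nx=e_L$ for every $x\in L$.

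Case analysis then completes the argument. For part~(1), if $L$ is infinite, I would argue that $nL=\{e_L\}$ forces $n=0$, since any nonzero $n$ would make $L$ a group of bounded exponent (dividing $n$), contrary to the standing hypothesis; then $y\in L$ is unconstrained, and $\chi_{y,0}\leftrightarrow y$ realises the claimed identification with $L$. For part~(2), if $L$ is finite of order $M$, Lagrange's theorem gives $Mx=e_L$ for every $x\in L$, so $n\in M\mathbb{Z}$ yields $nL=\{e_L\}$; parameterising $\chi_{y,n}$ by $(y,n)\in L\times M\mathbb{Z}$ identifies $\Gamma(\mathbb{W}(L),N)$ with $L\times M\mathbb{Z}$.

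The main obstacle is the step in part~(1): making precise that an infinite LCA group $L$ cannot satisfy $nL=\{e_L\}$ for any nonzero $n$; this is where the hypothesis ``infinite'' does its real work, and it is the only step that needs more than a line of calculation. Everything else is either a direct unpacking of $\chi_{y,n}\circ\theta_x$ or a standard appeal to Pontryagin duality, and the essential structural input is the Abelian reduction furnished by Corollary~\ref{Ginv.Semi.Kabelian}.
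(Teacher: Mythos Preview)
Your approach mirrors the paper's: both reduce via the $N=K$ invariance criterion (you cite Corollary~\ref{Ginv.Semi.Kabelian}, the paper uses Proposition~\ref{Ginv.semi.K}) to the condition $\chi_{y,n}\circ\theta_x=\chi_{y,n}$, compute this as $\chi_{y+nx,n}$, and arrive at the equivalent requirement $nx=e_L$ for all $x\in L$. Your extra appeal to Pontryagin duality (characters separate points) makes explicit a step the paper leaves implicit when it equates $\chi_{yx^n,n}=\chi_{y,n}$ with $yx^n=y$.

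The obstacle you flag in part~(1) is genuine and cannot be repaired as stated: an infinite LCA group \emph{can} have bounded exponent --- for instance $L=\prod_{i\in\mathbb{N}}\mathbb{Z}/p\mathbb{Z}$ with the product topology is compact, infinite, and satisfies $pL=\{0\}$ --- so the implication ``$nL=\{e_L\}$ forces $n=0$'' fails in general. The paper asserts the same implication without justification, so this is a gap in the paper's own argument, not a defect peculiar to your proposal. Your instinct that this step needed more than a line was correct; in fact it needs an additional hypothesis on $L$ (such as the existence of an element of infinite order).

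A parallel issue sits in part~(2): you (and the paper) verify that $n\in M\mathbb{Z}$ is sufficient for $nL=\{e_L\}$ via Lagrange, but neither argument establishes necessity. Necessity would require the exponent of $L$ to equal its order $M$, which fails for non-cyclic finite abelian groups (e.g.\ $L=\mathbb{Z}/2\times\mathbb{Z}/2$ has $2L=\{0\}$ while $M=4$). So your argument for (2) proves only the inclusion $L\times M\mathbb{Z}\subseteq\Gamma(\mathbb{W}(L),\widehat{L}\times\mathbb{T})$, matching exactly what the paper's proof actually establishes.
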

\begin{proof}
Let $x,y\in L$ and $n\in\mathbb{Z}$. Then, for each $(\omega,z)\in\widehat{L}\times\mathbb{T}$, we have
\begin{align*}
\chi_{y,n}\circ\theta_x(\omega,z)
&=\chi_{y,n}(\omega,\omega(x)z)=\omega(y)\omega(x)^nz^n
\\&=\omega(y)\omega(x^n)z^n=\omega(yx^n)z^n=\chi_{yx^n,n}(\omega,z),
\end{align*}
which implies that $\chi_{y,n}\circ\theta_x=\chi_{yx^n,n}$. 
Invoking Proposition \ref{Ginv.semi.K}, we deduce that the character $\chi_{y,n}$ is $\mathbb{W}(L)$-invariant if and only if $y=yx^n$ for all $x\in L$ or equivalently $x^n=e_L$ for all $x\in L$. 

(1) Suppose $L$ is an infinite group. Then, $x^n=e_L$ for all $x\in L$, if and only if  
$n=0$. Hence, we conclude that the character $\chi_{y,n}$ is $\mathbb{W}(L)$-invariant if and only if $n=0$. Therefore, 
\[
\Gamma(\mathbb{W}(L),\widehat{L}\times\mathbb{T})=\{\chi_{y,0}:y\in L\}=L.
\]
(2) Let $L$ be a finite group of order $M$. Then, $x^n=e_L$ for all $x\in L$, if and only if $n\in M\mathbb{Z}$. Therefore, we deduce that $
\Gamma(\mathbb{W}(L),\widehat{L}\times\mathbb{T})=\{\chi_{y,n}:y\in L,\ n\in M\mathbb{Z}\}=L\times M\mathbb{Z}$.
\end{proof}

Let $L$ be an infinite LCA group and $\xi:=\chi_{y,0}$ with $y\in L$. Then, using (\ref{Jxi.K}), for $f\in\mathcal{C}_c(\mathbb{W}(L))$ 
and $(x,\omega,z)\in\mathbb{W}(L)$, we get 
\[
T_\xi(f)(x,\omega,z)=\frac{\omega(y)}{2\pi}\int_0^{2\pi}\int_{\widehat{L}}f(x,\zeta,e^{\ii\alpha})\overline{\zeta(y)}\dd\lambda_{\widehat{L}}(\zeta)\dd\alpha.
\]
Therefore, each $\phi\in \mathcal{C}_\xi(\mathbb{W}(L),\widehat{L}\times\mathbb{T})$ satisfies 
\begin{equation}\label{WL.main.prop.whL.T}
\phi(x,\omega,z)=\omega(y)\phi(x,\mathbf{1},1),
\end{equation}
for all $(x,\omega,z)\in\mathbb{W}(L)$.
Also, for each $1\le p<\infty$ and $\phi\in \mathcal{C}_\xi(\mathbb{W}(L),\widehat{L}\times\mathbb{T})$, we have 
\[
\|\phi\|_{(p)}^p:=\int_L|\phi(x,\mathbf{1},1)|^p\dd\lambda_{L}(x).
\]
Let $\psi,\phi\in \mathcal{C}_\xi(\mathbb{W}(L),\widehat{L}\times\mathbb{T})$ and $(x,\omega,z)\in\mathbb{W}(L)$. Then   
\begin{align*}
\psi\natural\phi(x,\omega,z)=
\int_L\phi(x',\mathbf{1},1)\varphi(x-x',\omega,\overline{\omega(x')}z)\dd\lambda_{L}(x'),
\end{align*}
Applying (\ref{WL.main.prop.whL.T}), we get the following explicit characterization 
\begin{equation}\label{cc.main.semi.awhg.n=whL.T}
\psi\natural\phi(x,\omega,z)=\omega(y)\int_L\psi(x',\mathbf{1},1)\phi(x-x',\mathbf{1},1)\dd\lambda_{L}(x'),
\end{equation}
and the covariant involution is given by 
\begin{equation}\label{ci.main.semi.awhg.n=whL.T}
\psi^\natural(x,\omega,z)=\omega(y)\overline{\psi(-x,\mathbf{1},1)}.
\end{equation}
\begin{example}
Let $d\ge 1$ and $L:=\mathbb{R}^d$. We then have $\widehat{\mathbb{R}^d}=\{\widehat{\bom}:\bom\in\mathbb{R}^d\}$, where the character $\widehat{\bom}:\mathbb{R}^d\to\mathbb{T}$ is given by $\widehat{\bom}(\mathbf{x}):=e^{\ii\langle\mathbf{x},\bom\rangle}$, for $\mathbf{x}\in\mathbb{R}^d$. In this case, using Proposition \ref{Gamma.WL.whL.T.L.infinite}(1), we have 
\[
\Gamma(\mathbb{R}^d\ltimes(\mathbb{R}^d\times\mathbb{T}),\mathbb{R}^d\times\mathbb{T})=\mathbb{R}^d.
\]
Let $\mathbf{y}\in\mathbb{R}^d$ and $\xi=\chi_{\mathbf{y},0}$. Suppose $\psi,\phi\in\mathcal{C}_{\xi}(\mathbb{W}(\mathbb{R}^d),\mathbb{R}^d\times\mathbb{T})$ and $( \mathbf{x},\bom,z)\in\mathbb{W}(\mathbb{R}^d)$. Using  (\ref{cc.main.semi.awhg.n=whL.T}), the covariant convolution is   
\begin{equation*}
\psi\natural\phi(\mathbf{x},\bom,z)=e^{\ii\langle\bom,\mathbf{y}\rangle}\int_{\mathbb{R}^d}\psi(\mathbf{x}',\mathbf{1},1)\phi(\mathbf{x}-\mathbf{x}',\mathbf{1},1)\dd\mathbf{x'},
\end{equation*} 
and using (\ref{ci.main.semi.awhg.n=whL.T}), the covariant involution is 
\[
\psi^\natural(\mathbf{x},\bom,z)=e^{\ii\langle\bom,\mathbf{y}\rangle}\overline{\psi(-\mathbf{x},\mathbf{1},1)}.
\]
\end{example}
\begin{example}
Let $d\ge 1$ and $L:=\mathbb{Z}^d$. We then have 
$\widehat{\mathbb{Z}^d}=\{\widehat{\mathbf{z}}:\mathbf{z}=(z_1,...,z_d)^T\in\mathbb{T}^d\}$, where the character $\widehat{\mathbf{z}}:\mathbb{Z}^d\to\mathbb{T}$ is given by 
$\widehat{\mathbf{z}}(\mathbf{k}):=\prod_{j=1}^dz_j^{k_j}$, for all $\mathbf{k}:=(k_1,...,k_d)^T\in\mathbb{Z}^d$ and $\mathbf{z}:=(z_1,...,z_d)^T\in\mathbb{T}^d$. In this case, using Proposition \ref{Gamma.WL.whL.T.L.infinite}(1), we have
\[
\Gamma(\mathbb{Z}^d\ltimes\mathbb{T}^{d+1},\mathbb{T}^{d+1})=\mathbb{Z}^d.
\]
Let $\mathbf{n}\in\mathbb{Z}^d$ and $\xi:=\chi_{\mathbf{n}}$. Suppose $\psi,\phi\in\mathcal{C}_{\xi}(\mathbb{W}(\mathbb{Z}^d),\mathbb{T}^{d+1})$ and $(\mathbf{k},\mathbf{z},z)\in\mathbb{W}(\mathbb{Z}^d)$. Then, using (\ref{cc.main.semi.awhg.n=whL.T}), the covariant convolution is   
\[
\psi\natural\phi(\mathbf{k},\mathbf{z},z)=\widehat{\mathbf{z}}(\mathbf{n})\sum_{\mathbf{k}'\in\mathbb{Z}^d}\psi(\mathbf{k}',\mathbf{1},1)\phi(\mathbf{k}-\mathbf{k}',\mathbf{1},1),
\]
and using (\ref{ci.main.semi.awhg.n=whL.T}), the covariant involution is  
\[
\psi^\natural(\mathbf{k},\mathbf{z},z)=\widehat{\mathbf{z}}(\mathbf{n})\overline{\psi(-\mathbf{k},\mathbf{1},1)}.
\]
\end{example}
Let $L$ be a finite Abelian group of order $M$ and $\xi:=\chi_{y,n}$ with $y\in L$ and $n\in M\mathbb{Z}$. Then, $\widehat{L}$ is a finite Abelian group as well. Thus, using (\ref{Jxi.K}), for $f\in\mathcal{C}_c(\mathbb{W}(L))$ 
and $(x,\omega,z)\in\mathbb{W}(L)$, we get 
\[
T_\xi(f)(x,\omega,z)=\frac{\omega(y)z^n}{2\pi}\int_0^{2\pi}\sum_{\zeta\in\widehat{L}}f(x,\zeta,e^{\ii\alpha})\overline{\zeta(y)}e^{-\ii n\alpha}\dd\alpha.
\]
Therefore, each $\phi\in \mathcal{C}_\xi(\mathbb{W}(L),\widehat{L}\times\mathbb{T})$ satisfies 
\begin{equation}\label{WL.main.prop.whL.T.L.finite}
\phi(x,\omega,z)=\omega(y)z^n\phi(x,\mathbf{1},1),
\end{equation}
for all $(x,\omega,z)\in\mathbb{W}(L)$.
Also, for each $1\le p<\infty$ and $\phi\in \mathcal{C}_\xi(\mathbb{W}(L),\widehat{L}\times\mathbb{T})$, we have 
\[
\|\phi\|_{(p)}^p:=\sum_{x\in L}|\phi(x,\mathbf{1},1)|^p.
\]
Let $\psi,\phi\in \mathcal{C}_\xi(\mathbb{W}(L),\widehat{L}\times\mathbb{T})$ and $(x,\omega,z)\in\mathbb{W}(L)$. Then    
\begin{align*}
\psi\natural\phi(x,\omega,z)=
\sum_{x'\in L}\phi(x',\mathbf{1},1)\varphi(x-x',\omega,\overline{\omega(x')}z).
\end{align*}
Applying (\ref{WL.main.prop.whL.T.L.finite}), we get the following explicit characterization 
for covariant convolution 
\begin{equation}\label{cc.main.semi.awhg.n=whL.T.L.finite}
\psi\natural\phi(x,\omega,z)=\omega(y)z^n\sum_{x'\in L}\psi(x',\mathbf{1},1)\phi(x-x',\mathbf{1},1),
\end{equation}
and the covariant involution is given by 
\begin{equation}\label{ci.main.semi.awhg.n=whL.T.L.finite}
\psi^\natural(x,\omega,z)=\omega(y)z^n\overline{\psi(-x,\mathbf{1},1)}.
\end{equation}
\begin{example}
Let $M>0$ be a positive integer and $L:=\mathbb{Z}_M$ be the finite cyclic 
group of integers modulo $M$. We then have $\widehat{L}=\{\widehat{\ell}:\ell\in\mathbb{Z}_M\}$, where the character $\widehat{\ell}:\mathbb{Z}_M\to\mathbb{T}$ is given by 
$\widehat{\ell}(m):=e^{2\pi\ii m\ell/M}$, for all $m,\ell\in\mathbb{Z}_M$. In this case, using Proposition \ref{Gamma.WL.whL.T.L.infinite}(2), we get 
\[
\Gamma(\mathbb{Z}_M\ltimes(\mathbb{Z}_M\times\mathbb{T}),\mathbb{Z}_M\times\mathbb{T})=\mathbb{Z}_M\times M\mathbb{Z}.
\] 
Let $n\in\mathbb{Z}_M$ and $k\in M\mathbb{Z}$. Suppose $\xi:=\chi_{n,k}$, $\psi,\phi\in \mathcal{C}_\xi(\mathbb{W}(\mathbb{Z}_M),\mathbb{Z}_M\times\mathbb{T})$, and $(m,\ell,z)\in\mathbb{W}(\mathbb{Z}_M)$. Then, using (\ref{cc.main.semi.awhg.n=whL.T.L.finite}), the covariant convolution is given by  
\[
\psi\natural\phi(m,\ell,z)=z^ke^{2\pi\ii\ell n/M}\sum_{m'=0}^{M-1}\psi(m',\mathbf{1},1)\phi(m-m',\mathbf{1},1),
\]
and using (\ref{ci.main.semi.awhg.n=whL.T.L.finite}) the covariant involution is given by 
\[
\psi^\natural(m,\ell,z)=z^ke^{2\pi\ii\ell n/M}\overline{\psi(M-m,\mathbf{1},1)}.
\]
\end{example}

\subsection{The Heisenberg Groups}
Let $d\ge 1$ and locally compact groups $H,K$ be given by $H:=\mathbb{R}^d$
and $K:=\mathbb{R}^d\times\mathbb{R}$. For each $\mathbf{x}\in H$, define $\theta_\mathbf{x}:K\to K$ by 
$\theta_\mathbf{x}(\mathbf{y},s):=(\mathbf{y},s+\langle\mathbf{x},\mathbf{y}\rangle)$,
for $(\mathbf{y},s)\in K$. The Heisenberg group $\mathbb{H}^d$ is the semi-direct product group $H\ltimes_\theta K$, that is the underlying manifold $\mathbb{R}^{2d+1}$ with the group law given by 
\[
(\mathbf{x},\mathbf{y},s)\ltimes_\theta (\mathbf{x}',\mathbf{y}',s')=(\mathbf{x}+\mathbf{x}',\mathbf{y}+\mathbf{y}',s+s'+\langle\mathbf{x},\mathbf{y}'\rangle),
\] 
\[
(\mathbf{x},\mathbf{y},s)^{-1}=(-\mathbf{x},-\mathbf{y},-s+\langle\mathbf{x},\mathbf{y}\rangle),
\] 
for every $(\mathbf{x},\mathbf{y},s),(\mathbf{x}',\mathbf{y}',s')\in\mathbb{H}^d$.
Then $\chi(K)=\{\mathbf{e}_{\mathbf{z},\nu}:\nu\in\mathbb{R},\mathbf{z}\in\mathbb{R}^d\}$, where 
\[
\mathbf{e}_{\mathbf{z},\nu}(\mathbf{y},s)
:=e^{\ii\langle\mathbf{z},\mathbf{y}\rangle}e^{\ii \nu s},
\]
for $s,\nu\in\mathbb{R}$ and $\mathbf{z},\mathbf{y}\in\mathbb{R}^d$.

\subsubsection{\bf The Case $N=\mathbb{R}$}
Let $N:=\{(\mathbf{0},\mathbf{0},s):s\in\mathbb{R}\}$. Then $N$ is a central subgroup of $\mathbb{H}^d$ and also 
$\theta_\mathbf{x}(\mathbf{0},s)=(\mathbf{0},s)$, for $\mathbf{x}\in H$ and $s\in\mathbb{R}$. Thus, any character $\xi\in\chi(\mathbb{R})$ is $\mathbb{H}^d$-invariant. Therefore, we get 
$\Gamma(\mathbb{H}^d,\mathbb{R})=\chi(\mathbb{R})$.  
Also, we have $\chi(\mathbb{R})=\{\mathbf{e}_\nu:\nu\in\mathbb{R}\}$, where $\mathbf{e}_\nu:=\mathbf{e}_{\mathbf{0},\nu}$. We then have 
$\Gamma(\mathbb{H}^d,\mathbb{R})=\mathbb{R}$. 
Let $\nu\in\mathbb{R}$ and $\xi:=\mathbf{e}_\nu$. Suppose $f\in\mathcal{C}_c(\mathbb{H}^d)$ and $(\mathbf{x},\mathbf{y},t)\in\mathbb{H}^d$. Then 
\[
T_\nu(f)(\mathbf{x},\mathbf{y},t)=\int_{-\infty}^\infty f(\mathbf{x},\mathbf{y},t+s)e^{-\ii\nu s}\dd s,
\]
Hence, 
\[
T_\nu(f)(\mathbf{x},\mathbf{y},t)=e^{\ii\nu t}\int_{-\infty}^\infty f(\mathbf{x},\mathbf{y},s)e^{-\ii \nu s}ds=e^{\ii \nu t}T_\nu(f)(\mathbf{x},\mathbf{y},0).
\]
So, each $\psi\in\mathcal{C}_\nu(\mathbb{H}^d,\mathbb{R})$ satisfies 
\begin{equation}\label{Hd.main.prop.R}
\psi(\mathbf{x},\mathbf{y},t)=e^{\ii\nu t}\psi(\mathbf{x},\mathbf{y},0).
\end{equation}
Let $\psi,\phi\in\mathcal{C}_\nu(\mathbb{H}^d,\mathbb{R})$. Then, using (\ref{cc.main.semi}), the covariant convolution is given by 
\begin{align*}
\psi\natural\phi(\mathbf{x},\mathbf{y},t)
=\int_{\mathbb{R}^d}\int_{\mathbb{R}^d}\psi(\mathbf{x}',\mathbf{y}',0)\phi(\mathbf{x}-\mathbf{x}',\mathbf{y}-\mathbf{y}',t-\langle\mathbf{x}',\mathbf{y}-\mathbf{y}'\rangle)\dd\mathbf{x}'\dd\mathbf{y}'.
\end{align*}
Applying (\ref{Hd.main.prop.R}), we get the following explicit characterization
\begin{equation*}
\psi\natural\phi(\mathbf{x},\mathbf{y},t)=e^{\ii\nu t}\int_{\mathbb{R}^d}\int_{\mathbb{R}^d}\psi(\mathbf{x}',\mathbf{y}',0)\phi(\mathbf{x}-\mathbf{x}',\mathbf{y}-\mathbf{y}',0)e^{-\ii\nu\langle\mathbf{x}',\mathbf{y}-\mathbf{y}'\rangle}\dd\mathbf{x}'\dd\mathbf{y}',
\end{equation*}
and the covariant involution is given by 
\[
\psi^\natural(\mathbf{x},\mathbf{y},t)=e^{\ii\nu s}e^{-\ii\nu\langle\mathbf{x},\mathbf{y}\rangle}\overline{\psi(-\mathbf{x},-\mathbf{y},0)}.
\]
\subsubsection{\bf The Case $N=\mathbb{R}^{d+1}$}
Let $N:=\{(\mathbf{y},s):s\in\mathbb{R},\mathbf{y}\in\mathbb{R}^d\}$.
Then, $N$ is a closed normal subgroup of $\mathbb{H}^d$. 
The following result characterizes $\mathbb{H}^d$-invariant characters of $\mathbb{R}^{d+1}$. 

\begin{proposition}
{\it Let $d\ge 1$ be given. Then  
\[
\Gamma(\mathbb{H}^d,\mathbb{R}^{d+1})=\mathbb{R}^d.
\]
}\end{proposition}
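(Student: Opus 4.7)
The plan is to apply Proposition \ref{Ginv.semi.K} with the choice $N=K=\mathbb{R}^{d+1}$, which reduces the $\mathbb{H}^d$-invariance condition for a character $\xi\in\chi(\mathbb{R}^{d+1})$ to the single relation $\xi=\xi\circ\theta_{\mathbf{x}}$ for every $\mathbf{x}\in H=\mathbb{R}^d$. Every character of $\mathbb{R}^{d+1}$ has the form $\xi=\mathbf{e}_{\mathbf{z},\nu}$ with $(\mathbf{z},\nu)\in\mathbb{R}^d\times\mathbb{R}$, so the task becomes a direct computation of how $\theta_{\mathbf{x}}$ acts on the parameters $(\mathbf{z},\nu)$ by pullback.

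First I compute $\xi\circ\theta_{\mathbf{x}}$ using $\theta_{\mathbf{x}}(\mathbf{y},s)=(\mathbf{y},s+\langle\mathbf{x},\mathbf{y}\rangle)$; the exponent factor $e^{\ii\nu\langle\mathbf{x},\mathbf{y}\rangle}$ merges with $e^{\ii\langle\mathbf{z},\mathbf{y}\rangle}$ to yield $\mathbf{e}_{\mathbf{z}+\nu\mathbf{x},\,\nu}$. Thus $\mathbf{e}_{\mathbf{z},\nu}\circ\theta_{\mathbf{x}}=\mathbf{e}_{\mathbf{z}+\nu\mathbf{x},\,\nu}$, and the invariance condition requires $\mathbf{z}+\nu\mathbf{x}=\mathbf{z}$ for all $\mathbf{x}\in\mathbb{R}^d$.

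Second I draw the conclusion: the equality $\nu\mathbf{x}=\mathbf{0}$ for every $\mathbf{x}\in\mathbb{R}^d$ forces $\nu=0$, while $\mathbf{z}\in\mathbb{R}^d$ is unrestricted. Hence the $\mathbb{H}^d$-invariant characters of $\mathbb{R}^{d+1}$ are exactly $\{\mathbf{e}_{\mathbf{z},0}:\mathbf{z}\in\mathbb{R}^d\}$, and the natural identification $\mathbf{z}\mapsto\mathbf{e}_{\mathbf{z},0}$ yields $\Gamma(\mathbb{H}^d,\mathbb{R}^{d+1})=\mathbb{R}^d$, completing the proof.

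There is no serious obstacle here; the only thing to be careful about is correctly applying Proposition \ref{Ginv.semi.K} (which applies in the case $N=K$) rather than the more general Proposition \ref{Ginv.Semi}, and ensuring the algebraic simplification of the cocycle term $e^{\ii\nu\langle\mathbf{x},\mathbf{y}\rangle}$ is performed cleanly so that the shift $\mathbf{z}\mapsto\mathbf{z}+\nu\mathbf{x}$ is evident.
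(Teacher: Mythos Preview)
Your proposal is correct and follows essentially the same approach as the paper: both apply Proposition \ref{Ginv.semi.K} to reduce invariance to $\mathbf{e}_{\mathbf{z},\nu}\circ\theta_{\mathbf{x}}=\mathbf{e}_{\mathbf{z},\nu}$, compute $\mathbf{e}_{\mathbf{z},\nu}\circ\theta_{\mathbf{x}}=\mathbf{e}_{\mathbf{z}+\nu\mathbf{x},\nu}$ directly from $\theta_{\mathbf{x}}(\mathbf{y},s)=(\mathbf{y},s+\langle\mathbf{x},\mathbf{y}\rangle)$, and conclude that $\nu\mathbf{x}=\mathbf{0}$ for all $\mathbf{x}$ forces $\nu=0$ while $\mathbf{z}$ remains free.
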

\begin{proof}
Let $\nu\in\mathbb{R}$ and $\mathbf{z}\in\mathbb{R}^d$ be given. Then
\begin{align*}
\mathbf{e}_{\mathbf{z},\nu}\circ\theta_{\mathbf{x}}(\mathbf{y},s)
&=\mathbf{e}_{\mathbf{z},\nu}(\theta_{\mathbf{x}}(\mathbf{y},s))
=\mathbf{e}_{\mathbf{z},\nu}(\mathbf{y},s+\langle\mathbf{x},\mathbf{y}\rangle)
=e^{\ii\langle\mathbf{z},\mathbf{y}\rangle}e^{\ii \nu(s+\langle\mathbf{x},\mathbf{y}\rangle)}
\\&=e^{\ii\langle\mathbf{z},\mathbf{y}\rangle}e^{\ii \nu s}e^{\ii\nu\langle\mathbf{x},\mathbf{y}\rangle}
=e^{\ii\langle\mathbf{z},\mathbf{y}\rangle}e^{\ii\langle\nu\mathbf{x},\mathbf{y}\rangle}e^{\ii \nu s}=e^{\ii\langle\mathbf{z}+\nu\mathbf{x},\mathbf{y}\rangle}e^{\ii \nu s}=\mathbf{e}_{\mathbf{z}+\nu\mathbf{x},\nu}(\mathbf{y},s),
\end{align*}
for every $\mathbf{x},\mathbf{y}\in\mathbb{R}^d$ and $s\in\mathbb{R}$.
Therefore, we obtain $\mathbf{e}_{\mathbf{z},\nu}\circ\theta_{\mathbf{x}}=\mathbf{e}_{\mathbf{z}+\nu\mathbf{x},\nu}$, for every $\mathbf{x}\in\mathbb{R}^d$. 
Using Proposition \ref{Ginv.semi.K}, we conclude that $\mathbf{e}_{\mathbf{z},\nu}\in\Gamma(\mathbb{H}^d,\mathbb{R}^{d+1})$ if and only if $\mathbf{e}_{\mathbf{z},\nu}=\mathbf{e}_{\mathbf{z}+\nu\mathbf{x},\nu}$, for every $\mathbf{x}\in\mathbb{R}^d$. So $\mathbf{e}_{\mathbf{z},\nu}\in\Gamma(\mathbb{H}^d,\mathbb{R}^{d+1})$ if and only if $\mathbf{z}+\nu\mathbf{x}=\mathbf{z}$ for every $\mathbf{x}\in\mathbb{R}^d$. This implies that $\mathbf{e}_{\mathbf{z},\nu}\in\Gamma(\mathbb{H}^d,\mathbb{R}^{d+1})$ if and only if $\nu=0$.
\end{proof} 
Let $\mathbf{z}\in\mathbb{R}^d$ and $\xi:=\mathbf{e}_\mathbf{z}$, where $\mathbf{e}_\mathbf{z}:=\mathbf{e}_{\mathbf{z},0}$.
Suppose $f\in\mathcal{C}_c(\mathbb{H}^d)$ and $(\mathbf{x},\mathbf{y},t)\in\mathbb{H}^d$. Then  
\[
J_\mathbf{z}(f)(\mathbf{x},\mathbf{y},t)=e^{\ii\langle\mathbf{z},\mathbf{y}\rangle}\int_{-\infty}^\infty\int_{\mathbb{R}^d} f(\mathbf{x},\mathbf{y}',s')e^{-\ii\langle\mathbf{z},\mathbf{y}'\rangle}\dd s' \dd\mathbf{y}',
\]
Hence, we get 
\[
J_\mathbf{z}(f)(\mathbf{x},\mathbf{y},t)=e^{\ii\langle\mathbf{z},\mathbf{y}\rangle}J_\mathbf{z}(f)(\mathbf{x},\mathbf{0},0).
\] 
Thus, each $\psi\in\mathcal{C}_\mathbf{z}(\mathbb{H}^d,\mathbb{R}^{d+1})$ satisfies 
\begin{equation}\label{Hd.main.prop.RRd}
\psi(\mathbf{x},\mathbf{y},t)=e^{\ii\langle\mathbf{z},\mathbf{y}\rangle}\psi(\mathbf{x},\mathbf{0},0).
\end{equation}
Let $\psi,\phi\in\mathcal{C}_\mathbf{z}(\mathbb{H}^d,\mathbb{R}^{d+1})$ and $(\mathbf{x},\mathbf{y},t)\in\mathbb{H}^d$. Then, using (\ref{cc.main.semi.canoK}), the covariant convolution is  
\begin{align*}
\psi\natural\phi(\mathbf{x},\mathbf{y},t)
=\int_{\mathbb{R}^d}\psi(\mathbf{x}',\mathbf{0},0)\phi(\mathbf{x}-\mathbf{x}',\mathbf{y},t-\langle\mathbf{x}',\mathbf{y}\rangle)\dd\mathbf{x}'.
\end{align*}
Applying (\ref{Hd.main.prop.RRd}), we get the following explicit characterization for covariant convolution  
\begin{equation*}
\psi\natural\phi(\mathbf{x},\mathbf{y},t)=e^{\ii\langle\mathbf{z},\mathbf{y}\rangle}\int_{\mathbb{R}^d}\psi(\mathbf{x}',\mathbf{0},0)\phi(\mathbf{x}-\mathbf{x}',\mathbf{0},0)\dd\mathbf{x}'.
\end{equation*}
and the covariant involution is given by 
\[
\psi^\natural(\mathbf{x},\mathbf{y},t)=e^{\ii\langle\mathbf{z},\mathbf{y}\rangle}\overline{\psi(-\mathbf{x},\mathbf{0},0)}.
\]

\subsection{The Subgroups of Jacobi Groups}

Let $d\ge 1$ and $\mathrm{Sp}(d)$ be the real symplectic group. 
Throughout this section, we briefly present basics of classical harmonic analysis on the real symplectic group $\Sp(d)$, for more details we referee the readers to \cite{deg0, kisil.book} and the comprehensive list of references therein. 

A matrix $\bfS\in M_{2d\times 2d}(\mathbb{R})$ is called symplectic if and only if $\bfS^T\mathbf{J}\bfS=\bfS\mathbf{J}\bfS^T=\mathbf{J}$, with 
$$\mathbf{J}=\left(\begin{array}{cc}
\mathbf{0} & {\mathbf{I}_{d}} \\
{-\mathbf{I}_{d}} & \mathbf{0} \\
\end{array}\right),$$ 
where 
$\mathbf{I}_{d}$ is $d\times d$ identity matrix..
The group consists of all symplectic matrices is called the (real) symplectic group which is denoted by $\Sp(d)$. 
It is a simple non-compact finite-dimensional real Lie group. 
In block-matrix notation, the symplectic group $\Sp(d)$ 
consists of all real $2d\times 2d$ matrices in block form 
\[
\bfS=\left(\begin{array}{cc}
\mathbf{A} & \mathbf{B} \\
\mathbf{C} & \mathbf{D} \\
\end{array}\right),
\hspace{1cm}\mathbf{A},\mathbf{B},\mathbf{C},\mathbf{D}\in M_{d\times d}(\mathbb{R}),
\]
such that $\mathbf{A}^T\mathbf{C}=\mathbf{C}^T\mathbf{A}$, $\mathbf{B}^T\mathbf{D}=\mathbf{D}^T\mathbf{B}$, and $\mathbf{A}^T\mathbf{D}-\mathbf{C}^T\mathbf{B}=\mathbf{I}_{d}$.
 
The real symplectic group $\Sp(d)$ satisfies the following decomposition, namely Iwasawa (Gram-Schmidt) decomposition, $\Sp(d)=\mathcal{K}\mathcal{A}\mathcal{N}$ where \cite{deg0, AGHF.JPhyA}
\begin{equation}
\mathcal{K}_d:=\left\{\Omega(\mathbf{A}+\ii\mathbf{B})=\left(\begin{array}{cc}
\mathbf{A} & \mathbf{-B} \\
\mathbf{B} & \mathbf{A} \\
\end{array}\right):\mathbf{A}+\ii\mathbf{B}\in\mathrm{U}(d,\mathbb{C})\right\},
\end{equation} 
\begin{equation}
\mathcal{A}_d:=\left\{{\rm diag}(h_1,...,h_d,h_1^{-1},...,h_d^{-1}):h_1,...,h_d>0\right\},
\end{equation} 
and 
\begin{equation}
\mathcal{N}_d:=\left\{
\left(\begin{array}{cc}
\mathbf{A} & \mathbf{B} \\
\mathbf{0} & \mathbf{A}^{-T} \\
\end{array}\right):\mathbf{A}\ {\rm is\ unit\ upper\ triangular},\ \mathbf{AB}^T=\mathbf{BA}^T\right\}.
\end{equation}

Regarding elements of $\Sp(d)$ as linear transformations over the vector space $\mathbb{R}^{2d}={\mathbb{R}^d}\times\widehat{{\mathbb{R}^d}}$, then the symplectic group 
$\Sp(d)$ is precisely the group of all linear automorphisms of 
${\mathbb{R}^d}\times\widehat{{\mathbb{R}^d}}$ which 
preserve the canonical (symplectic) form. 
Also, it is easy to check that, if $\lambda_{{\mathbb{R}^d}\times\widehat{{\mathbb{R}^d}}}$ is the Lebesgue measure on ${\mathbb{R}^d}\times\widehat{{\mathbb{R}^d}}$, then 
$\mathrm{d}\lambda_{{\mathbb{R}^d}\times\widehat{{\mathbb{R}^d}}}(\bfS\bfmu)=\mathrm{d}\lambda_{{\mathbb{R}^d}\times\widehat{{\mathbb{R}^d}}}(\bfmu)
$, for $\bfS\in\Sp(d)$.

Let $H$ be a closed subgroup of the real symplectic group $\mathrm{Sp}(d)$. Assume that $K$ is the underlying manifold $\mathbb{R}^{2d}\times\mathbb{R}$ equipped with the following group law
\[
(\bfmu,t)\ltimes(\bfmu',t'):=(\bfmu+\bfmu',t+t'+\frac{1}{2}\mathcal{J}[\bfmu,\bfmu']),
\]
\[
(\bfmu,t)^{-1}:=(-\bfmu,-t),
\]
where the symplectic form $\mathcal{J}:\mathbb{R}^{2d}\times\mathbb{R}^{2d}\to\mathbb{R}$ is $\mathcal{J}[\bfmu,\bfmu']:=\bfmu^T\mathbf{J}\bfmu'$,
for all $\bfmu,\bfmu'\in\mathbb{R}^{2d}$ with 
\[
\mathbf{J}:=\left(\begin{matrix}
\mathbf{0} & \mathbf{I}_d \\ 
-\mathbf{I}_d & \mathbf{0} \\ 
\end{matrix}\right).
\]
Suppose $\mathbb{G}_d(H):=H\ltimes_\theta K$ where the homomorphism $\theta:H\to Aut(K)$ is given by $\bfS\mapsto\theta_\bfS$, with $\theta_\bfS(\bfmu,t):=(\bfS\bfmu,t)$ for $(\bfmu,t)\in\mathbb{R}^{2d}\times\mathbb{R}$.
Then the group $\mathbb{G}_d(H)$ has the underlying manifold $H\times \mathbb{R}^{2d+1}=H\times\mathbb{R}^{2d}\times\mathbb{R}$ with the following group laws
\[
(\bfS,\bfmu,t)\ltimes(\bfS',\bfmu',t'):=(\bfS\bfS',\bfmu+\bfS\bfmu',t+t'+\frac{1}{2}\mathcal{J}[\bfmu,\bfS\bfmu']),
\]
\[
(\bfS,\bfmu,t)^{-1}:=(\bfS^{-1},-\bfS^{-1}\bfmu,-t),
\]

It is shown that if $\lambda_H$ is a left Haar measure of $H$ then 
$\dd\lambda_{\mathbb{G}_d(H)}(\bfS,\bfmu,t):=\dd\lambda_H(\bfS)\dd\bfmu\dd t,$ 
is a left Haar mesure for $\mathbb{G}_d(H)$ and if $\Delta_H$ is the modular function of $H$ then $\Delta_{\mathbb{G}_d(H)}(\bfS,\bfmu,t):=\Delta_H(\bfS)$ is the modular function of $\mathbb{G}_d(H)$, see \cite{AGHF.JPhyA}. 
\subsubsection{\bf The Case $N:=\mathbb{R}$}
Let $N:=\{(\mathbf{I},\mathbf{0},s):s\in\mathbb{R}\}$. Then, $N$ is a central subgroup of $\mathbb{G}_d(H)$. So $N$ is normal in $\mathbb{G}_d(H)$. 
\begin{proposition}
{\it Let $d\ge 1$ be given. Then  
\[
\Gamma(\mathbb{G}_d(H),\mathbb{R})=\mathbb{R}.
\]
}\end{proposition}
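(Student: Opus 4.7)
The plan is to reduce the statement to the earlier observation that central closed subgroups trivially give $\Gamma(G,N)=\chi(N)$. So the whole task amounts to verifying that the copy $N=\{(\mathbf{I},\mathbf{0},s):s\in\mathbb{R}\}$ sits inside the center of $\mathbb{G}_d(H)$, together with the identification $\chi(\mathbb{R})\cong\mathbb{R}$ furnished by Pontryagin duality.

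First I would fix $(\bfS,\bfmu,t)\in\mathbb{G}_d(H)$ and $(\mathbf{I},\mathbf{0},s)\in N$ and plug them into the semi-direct product law as given in the paper. Since $\bfS\cdot\mathbf{0}=\mathbf{0}$ and $\mathbf{I}\cdot\bfmu=\bfmu$, the symplectic cocycle terms $\tfrac{1}{2}\mathcal{J}[\bfmu,\bfS\cdot\mathbf{0}]$ and $\tfrac{1}{2}\mathcal{J}[\mathbf{0},\mathbf{I}\cdot\bfmu]$ both vanish, so a direct computation gives
\[
(\bfS,\bfmu,t)\ltimes(\mathbf{I},\mathbf{0},s)=(\bfS,\bfmu,t+s)=(\mathbf{I},\mathbf{0},s)\ltimes(\bfS,\bfmu,t).
\]
This shows $N$ is central in $\mathbb{G}_d(H)$, and in particular normal.

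Next I would invoke the remark made immediately after the definition of $\Gamma(G,N)$ in Section~3, which asserts that whenever $N$ is a closed central subgroup of $G$, the conjugation action is trivial on $N$ and hence $\xi_x=\xi$ for every $\xi\in\chi(N)$ and every $x\in G$. This yields $\Gamma(\mathbb{G}_d(H),\mathbb{R})=\chi(\mathbb{R})$. Finally, Pontryagin duality identifies $\chi(\mathbb{R})$ with $\mathbb{R}$ via $\nu\mapsto\mathbf{e}_\nu$, where $\mathbf{e}_\nu(s):=e^{\ii\nu s}$, completing the identification $\Gamma(\mathbb{G}_d(H),\mathbb{R})=\mathbb{R}$.

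There is essentially no obstacle here: the only non-formal step is the centrality verification, and that reduces to two one-line block-matrix computations because the second coordinate of elements of $N$ is zero. The rest is bookkeeping.
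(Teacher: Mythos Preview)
Your proof is correct and follows essentially the same route as the paper: both establish that $N$ is central in $\mathbb{G}_d(H)$, invoke the Section~3 remark that centrality forces $\Gamma(G,N)=\chi(N)$, and then identify $\chi(\mathbb{R})$ with $\mathbb{R}$. Your version is in fact slightly more explicit, since you carry out the commutation computation for centrality, whereas the paper simply asserts it in the text preceding the proposition.
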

\begin{proof}
Since $\mathbb{R}$ is a central subgroup of $\mathbb{G}_d(H)$, we deduce that each character $\xi\in\chi(\mathbb{R})$ is 
$\mathbb{G}_d(H)$-invariant. Hence, we get 
$\Gamma(\mathbb{G}_d(H),\mathbb{R})=\chi(\mathbb{R})$.
Since $\chi(\mathbb{R})=\mathbb{R}$, each character $\xi\in\chi(\mathbb{R})$ has the form $\chi_\nu$ for a unique $\nu\in\mathbb{R}$, where the character $\chi_\nu:\mathbb{R}\to\mathbb{T}$ is given by $\chi_\nu(t):=e^{\ii \nu t}$. 
\end{proof}
Let $\nu\in\mathbb{R}$, $f\in\mathcal{C}_c(\mathbb{H}^d)$, and $(\bfS,\bfmu,t)\in\mathbb{G}_d(H)$. Then
\[
J_\nu(f)(\bfS,\bfmu,t)=\int_{-\infty}^\infty f(\bfS,\bfmu,t+s)e^{-\ii\nu s}\dd s.
\]
Hence, we get 
\[
J_\nu(f)(\bfS,\bfmu,t)=e^{\ii\nu t}\int_{-\infty}^\infty f(\bfS,\bfmu,s)e^{-\ii \nu s}\dd s=e^{\ii \nu t}J_\nu(f)(\bfS,\bfmu,0).
\]
Thus, each $\psi\in\mathcal{C}_\nu(\mathbb{G}_d(H),\mathbb{R})$ satisfies 
\begin{equation}\label{GHd.main.prop.R}
\psi(\bfS,\bfmu,t)=e^{\ii\nu t}\psi(\bfS,\bfmu,0).
\end{equation}
In this case, for $1\le p<\infty$, the norm is given by 
\[
\|\psi\|_{(p)}^p=\int_H\int_{\mathbb{R}^{2d}}|\psi(\bfS,\bfmu,0)|^p\dd\lambda_H(\bfS)\dd\bfmu.
\]
Let $\psi,\phi\in\mathcal{C}_\nu(\mathbb{G}_d(H),\mathbb{R})$ and $(\bfS,\bfmu,t)\in\mathbb{G}_d(H)$. Then, using (\ref{cc.main.semi}), we have 
\begin{align*}
\psi\natural\phi(\bfS,\bfmu,t)
&=\int_{H}\int_{\mathbb{R}^{2d}}\psi(\bfS',\bfmu',0)\phi(\bfS'^{-1}\bfS,\bfS'^{-1}\bfmu-\bfS'^{-1}\bfmu',t-\frac{1}{2}\mathcal{J}[\bfmu',\bfmu])\dd\lambda_{H}(\bfS')\dd\bfmu'.
\end{align*}
Applying (\ref{GHd.main.prop.R}), we get the following explicit characterization for covariant convolution 
\begin{equation}\label{cc.GHd.N=R}
\psi\natural\phi(\bfS,\bfmu,t)=e^{\ii\nu t}\int_{H}\int_{\mathbb{R}^{2d}}e^{-\ii\nu\frac{1}{2}\mathcal{J}[\bfmu',\bfmu]}\psi(\bfS',\bfmu',0)\phi(\bfS'^{-1}\bfS,\bfS'^{-1}\bfmu-\bfS'^{-1}\bfmu',0)\dd\lambda_{H}(\bfS')\dd\bfmu',
\end{equation}
and the covariant involution is 
\begin{equation}
\psi^\natural(\bfS,\bfmu,t):=\Delta_H(\bfS^{-1})e^{\ii\nu t}\overline{\psi(\bfS^{-1},-\bfS^{-1}\bfmu,0)}.
\end{equation}

\begin{example}
Let $H:=\{\mathbf{I}_{2d}\}$ be the identity subgroup. In this case, $\mathbb{G}_d(\{\mathbf{I}_{2d}\})\cong\mathbb{H}^d$ via the topological group isomorphism $(\bfmu,t)\to(\bfmu,t+\frac{1}{2}\langle\mathbf{x},\mathbf{y}\rangle)$, for every $\bfmu=(\mathbf{x},\mathbf{y})\in\mathbb{R}^{2d}$ and $t\in\mathbb{R}$.  Then covariant convolution/involution 
on $\mathcal{C}_\nu(\mathbb{G}_d(\{\mathbf{I}_{2d}\}),\mathbb{R})$ can be identified via the covariant convolution/involution on 
$\mathcal{C}_\nu(\mathbb{H}^d,\mathbb{R})$, for every $\nu\in\mathbb{R}$. 
\end{example}

\begin{example}
Let $H=\Sp(d)$ be the real symplectic group. Then, $\mathbb{G}_d(\Sp(d))$ is the Jacobi group $\mathbb{G}^d$. Using (\ref{cc.GHd.N=R}), we get the following explicit formula for covariant convolutions 
\begin{equation*}
\psi\natural\phi(\bfS,\bfmu,t)=e^{\ii\nu t}\int_{\Sp(d)}\int_{\mathbb{R}^{2d}}e^{-\ii\nu\frac{1}{2}\mathcal{J}[\bfmu',\bfmu]}\psi(\bfS',\bfmu',0)\phi(\bfS'^{-1}\bfS,\bfS'^{-1}\bfmu-\bfS'^{-1}\bfmu',0)\dd\lambda_{\Sp(d)}(\bfS')\dd\bfmu',
\end{equation*}
and the covariant involution is 
\[
\psi^\natural(\bfS,\bfmu,t):=\Delta_H(\bfS^{-1})e^{\ii\nu t}\overline{\psi(\bfS^{-1},-\bfS^{-1}\bfmu,0)}.
\]
\end{example}

\begin{example}
Let $H$ be the subgroup of $\Sp(d)$ given by 
\[
H:=\left\{\widetilde{\mathbf{H}}:=\left(\begin{matrix}
\mathbf{H} & \mathbf{0} \\ 
\mathbf{0} & \mathbf{H}^{-T}
\end{matrix}\right):\mathbf{H}\in\mathrm{GL}(d,\mathbb{R})\right\}.
\]
Then the group $\mathbb{G}_d(H)$ is isomorphic to 
$\mathrm{GL}(d,\mathbb{R})\ltimes\mathbb{R}^{2d+1}$ with the group law 
\[
(\mathbf{H},\mathbf{p},\mathbf{q},t)\ltimes(\mathbf{H}',\mathbf{p}',\mathbf{q}',t')
=(\mathbf{HH}',\mathbf{p}+\mathbf{H}\mathbf{p}',\mathbf{q}+\mathbf{H}^{-T}\mathbf{q}',t+t'+\frac{1}{2}(\mathbf{p}^T\mathbf{H}^{-T}\mathbf{q}'-\mathbf{q}^{T}\mathbf{H}\mathbf{p}')).
\]
Let $\nu\in\mathbb{R}$, $\psi,\phi\in\mathcal{C}_\nu(\mathbb{G}_d(H),\mathbb{R})$, and $(\widetilde{\mathbf{H}},\bfmu,t)\in\mathbb{G}_d(H)$. Then, using (\ref{cc.GHd.N=R}), $\psi\natural\phi(\widetilde{\mathbf{H}},\bfmu,t)$ is 
\[
e^{\ii\nu t}\int_{\mathrm{GL}(d,\mathbb{R})}\int_{\mathbb{R}^{2d}}e^{-\ii\nu\frac{1}{2}\mathcal{J}[\bfmu',\bfmu]}\psi(\widetilde{\mathbf{H}'},\bfmu',0)\phi(\widetilde{\mathbf{H}'^{-1}\mathbf{H}},\widetilde{\mathbf{H}'^{-1}}\bfmu-\widetilde{\mathbf{H}'^{-1}}\bfmu',0)|\det(\mathbf{H}')|^{-d}\dd\mathbf{H}'\dd\bfmu',
\]
where $\dd\mathbf{H}'$ is the 
Lebesgue measure on $\mathrm{GL}(d,\mathbb{R})$. The covariant involution is 
\[
\psi^\natural(\mathbf{H},\mathbf{p},\mathbf{q},t):=\Delta_H(\mathbf{H}^{-1})e^{\ii\nu t}\overline{\psi(\mathbf{H}^{-1},-\mathbf{H}^{-1}\mathbf{p},-\mathbf{H}^T\mathbf{q},0)}.
\]
\end{example}

\begin{example}
Let $H$ be the subgroup of $\Sp(d)$ given by 
\[
H:=\left\{\mathbf{S}(\mathbf{h}):=
\left(\begin{array}{cc}
\mathrm{diag}(\mathbf{h}) & \mathbf{0} \\
\mathbf{0} & \mathrm{diag}(\mathbf{h})^{-1} \\
\end{array}\right):\mathbf{h}\in\mathbb{R}^d_+\right\}.
\]
In this case, the group $\mathbb{G}_d(H)$ is isomorphic to $\mathbb{R}^d_+\ltimes\mathbb{R}^{2d+1}$ with the group law   
\[
(\mathbf{h},\mathbf{p},\mathbf{q},t)\ltimes(\mathbf{h}',\mathbf{p}',\mathbf{q}',t')
=(\mathbf{h}\odot\mathbf{h}',\mathbf{p}+\mathbf{h}\odot\mathbf{p}',\mathbf{q}+\mathbf{h}^{-1}\odot\mathbf{q}',t+t'+\frac{\langle\mathbf{p},\mathbf{h}^{-1}\odot\mathbf{q}'\rangle-\langle\mathbf{q},\mathbf{h}\odot\mathbf{p}'\rangle}{2}),
\]
where $\odot$ is the Hadamard product. Let $\nu\in\mathbb{R}$, $\psi,\phi\in \mathcal{C}_\nu(\mathbb{G}_d(H),\mathbb{R})$, and $(\mathbf{h},\bfmu,t)\in\mathbb{G}_d(H)$. Then, using (\ref{cc.GHd.N=R}), the  covariant convolution is 
\[
\psi\natural\phi(\mathbf{h},\bfmu,t)=e^{\ii\nu t}\int_{\mathbb{R}_+^d}\int_{\mathbb{R}^{2d}}e^{-\ii\nu\frac{1}{2}\mathcal{J}[\bfmu',\bfmu]}\psi(\mathbf{h}',\bfmu',0)\phi(\mathbf{h}'^{-1}\odot\mathbf{h},\mathbf{S}(\mathbf{h}'^{-1})\bfmu-\mathbf{S}(\mathbf{h}'^{-1})\bfmu',0)\dd\mathbf{h}'\dd\bfmu',
\]
and the covariant involution is 
\[
\psi^\natural(\mathbf{h},\bfmu,t):=e^{\ii\nu t}\overline{\psi(\mathbf{h}^{-1},
-\mathbf{S}(\mathbf{h})^{-1}\bfmu,0)}.
\]
\end{example}

{\bf Acknowledgment.}
This project has received funding from the European Union’s Horizon 2020 research and innovation programme under the Marie Sklodowska-Curie grant agreement No. 794305.  The author gratefully acknowledge the supporting agencies.  The findings and opinions expressed here are only those of the author, and not of the funding agency.\\
The author would like to express his deepest gratitude to Vladimir V. Kisil for suggesting the problem that motivated the results in this article, stimulating discussions and pointing out various references. 

\bibliography{CovInv}

\providecommand{\bysame}{\leavevmode\hbox to3em{\hrulefill}\thinspace}
\providecommand{\MR}{\relax\ifhmode\unskip\space\fi MR }
\providecommand{\MRhref}[2]{%
  \href{http://www.ams.org/mathscinet-getitem?mr=#1}{#2}
}
\providecommand{\href}[2]{#2}
\begin{thebibliography}{10}

\bibitem{RB}
Rolf Berndt, \emph{Representations of linear groups}, Vieweg, Wiesbaden, 2007,
  An introduction based on examples from physics and number theory.
  \MR{2340988}

\bibitem{Bour}
Nicolas Bourbaki, \emph{Integration. {II}. {C}hapters 7--9}, Elements of
  Mathematics (Berlin), Springer-Verlag, Berlin, 2004, Translated from the 1963
  and 1969 French originals by Sterling K. Berberian. \MR{2098271}

\bibitem{BuSmTrans1970}
Robert~C. Busby and Harvey~A. Smith, \emph{Representations of twisted group
  algebras}, Trans. Amer. Math. Soc. \textbf{149} (1970), 503--537. \MR{264418}

\bibitem{deg0}
Maurice~A. de~Gosson, \emph{Symplectic methods in harmonic analysis and in
  mathematical physics}, Pseudo-Differential Operators. Theory and
  Applications, vol.~7, Birkh\"{a}user/Springer Basel AG, Basel, 2011.
  \MR{2827662}

\bibitem{der1983}
Antoine Derighetti, \emph{\`a propos des convoluteurs d'un groupe quotient},
  Bull. Sci. Math. (2) \textbf{107} (1983), no.~1, 3--23. \MR{699988}

\bibitem{der00}
\bysame, \emph{Convolution operators on groups}, Lecture Notes of the Unione
  Matematica Italiana, vol.~11, Springer, Heidelberg; UMI, Bologna, 2011.
  \MR{2809956}

\bibitem{Fei2}
Hans~G. Feichtinger, \emph{On a class of convolution algebras of functions},
  Ann. Inst. Fourier (Grenoble) \textbf{27} (1977), no.~3, vi, 135--162.
  \MR{470610}

\bibitem{Fei1}
\bysame, \emph{Banach convolution algebras of functions. {II}}, Monatsh. Math.
  \textbf{87} (1979), no.~3, 181--207. \MR{536089}

\bibitem{FollP}
Gerald~B. Folland, \emph{Harmonic analysis in phase space}, Annals of
  Mathematics Studies, vol. 122, Princeton University Press, Princeton, NJ,
  1989. \MR{983366}

\bibitem{FollH}
\bysame, \emph{A course in abstract harmonic analysis}, Studies in Advanced
  Mathematics, CRC Press, Boca Raton, FL, 1995. \MR{1397028}

\bibitem{AGHF.IJM}
Arash Ghaani~Farashahi, \emph{Abstract convolution function algebras over
  homogeneous spaces of compact groups}, Illinois J. Math. \textbf{59} (2015),
  no.~4, 1025--1042. \MR{3628299}

\bibitem{AGHF.JPhyA}
\bysame, \emph{Square-integrability of multivariate metaplectic wave-packet
  representations}, J. Phys. A \textbf{50} (2017), no.~11, 115202, 22.
  \MR{3622579}

\bibitem{AGHF.CJM}
\bysame, \emph{A class of abstract linear representations for convolution
  function algebras over homogeneous spaces of compact groups}, Canad. J. Math.
  \textbf{70} (2018), no.~1, 97--116. \MR{3744887}

\bibitem{AGHF.ch.com}
\bysame, \emph{Covariant functions of characters of compact subgroups}, Annals
  of Functional Analysis. \textbf{12} (2021), no.~38.

\bibitem{AGHF.ch.normal.mod}
\bysame, \emph{Banach convolution modules of group algebras on covariant
  functions of characters of normal subgroups}, Bulletin des Sciences
  Mathématiques \textbf{185} (2023), no.~103259, Paper No. 103259, 19.
  \MR{4574492}

\bibitem{AGHF.ch.normal.main}
\bysame, \emph{Harmonic analysis of covariant functions of characters of normal
  subgroups}, Proceedings of the American Mathematical Society \textbf{151}
  (2023), no.~7, 2919--2933. \MR{4579367}

\bibitem{HR1}
Edwin Hewitt and Kenneth~A. Ross, \emph{Abstract harmonic analysis. {V}ol. {I}:
  {S}tructure of topological groups. {I}ntegration theory, group
  representations}, Die Grundlehren der mathematischen Wissenschaften, Bd. 115,
  Academic Press, Inc., Publishers, New York; Springer-Verlag,
  Berlin-G\"{o}ttingen-Heidelberg, 1963. \MR{0156915}

\bibitem{kan.lau}
Eberhard Kaniuth and Anthony To-Ming Lau, \emph{Fourier and
  {F}ourier-{S}tieltjes algebras on locally compact groups}, Mathematical
  Surveys and Monographs, vol. 231, American Mathematical Society, Providence,
  RI, 2018. \MR{3821506}

\bibitem{kan.taylor}
Eberhard Kaniuth and Keith~F. Taylor, \emph{Induced representations of locally
  compact groups}, Cambridge Tracts in Mathematics, vol. 197, Cambridge
  University Press, Cambridge, 2013. \MR{3012851}

\bibitem{kisil1}
Vladimir~V. Kisil, \emph{Relative convolutions. {I}. {P}roperties and
  applications}, Adv. Math. \textbf{147} (1999), no.~1, 35--73. \MR{1725814}

\bibitem{kisil.book}
\bysame, \emph{Geometry of {M}\"{o}bius transformations}, Imperial College
  Press, London, 2012, Elliptic, parabolic and hyperbolic actions of
  ${{\rm{S}}L}_2$(${\Bbb{R}}$). \MR{2977041}

\bibitem{kisil.cov2}
\bysame, \emph{Operator covariant transform and local principle}, J. Phys. A
  \textbf{45} (2012), no.~24, 244022, 10. \MR{2930517}

\bibitem{kisil.cov1}
\bysame, \emph{Calculus of operators: covariant transform and relative
  convolutions}, Banach J. Math. Anal. \textbf{8} (2014), no.~2, 156--184.
  \MR{3189548}

\bibitem{LepMAn1965}
H.~Leptin, \emph{Verallgemeinerte {$\mathfrak{L}^1$}-{A}lgebren}, Math. Ann.
  \textbf{159} (1965), 51--76. \MR{240562}

\bibitem{LepInv1-1967}
\bysame, \emph{Verallgemeinerte {$L^{1}$}-{A}lgebren und projektive
  {D}arstellungen lokal kompakter {G}ruppen. {I}}, Invent. Math. \textbf{3}
  (1967), 257--281. \MR{229754}

\bibitem{LepInv2-1967}
\bysame, \emph{Verallgemeinerte {$L^{1}$}-{A}lgebren und projektive
  {D}arstellungen lokal kompakter {G}ruppen. {II}}, Invent. Math. \textbf{4}
  (1967), 68--86. \MR{229754}

\bibitem{LepInv1968}
\bysame, \emph{Darstellungen verallgemeinerter {$L^{1}$}-{A}lgebren}, Invent.
  Math. \textbf{5} (1968), 192--215. \MR{236727}

\bibitem{MackI}
George~W. Mackey, \emph{Induced representations of locally compact groups.
  {I}}, Ann. of Math. (2) \textbf{55} (1952), 101--139. \MR{44536}

\bibitem{MackII}
\bysame, \emph{Induced representations of locally compact groups. {II}. {T}he
  {F}robenius reciprocity theorem}, Ann. of Math. (2) \textbf{58} (1953),
  193--221. \MR{56611}

\bibitem{MackActa1958}
\bysame, \emph{Unitary representations of group extensions. {I}}, Acta Math.
  \textbf{99} (1958), 265--311. \MR{98328}

\bibitem{pere}
A.~Perelomov, \emph{Generalized coherent states and their applications}, Texts
  and Monographs in Physics, Springer-Verlag, Berlin, 1986. \MR{858831}

\bibitem{50}
Hans Reiter and Jan~D. Stegeman, \emph{Classical harmonic analysis and locally
  compact groups}, second ed., London Mathematical Society Monographs. New
  Series, vol.~22, The Clarendon Press, Oxford University Press, New York,
  2000. \MR{1802924}

\end{thebibliography}
\bibliographystyle{amsplain}
\end{document}